\def\R{\mathbb R}
\def\N{\mathbb N}
\def\C{\mathcal C}
\def\P{\mathcal P}
\def \M{\mathcal{M}}
\def\a{a}
\def\b{b}
\def \V {\mathcal{V}}
\def \ra {\rangle}
\def \la {\langle}
 \def \ds {\displaystyle}
\def \nub {\boldsymbol \nu}
\def \mub {\boldsymbol \mu}
\def\d{\mathrm{d}}
\def\Q{\mathcal{Q}}
\newtheorem{theo}{Theorem}[section]
\newtheorem{prop}[theo]{Proposition}
\newtheorem{cor}[theo]{Corollary}
\newtheorem{lem}[theo]{Lemma}
\newtheorem{defi}[theo]{Definition}
\newtheorem{nb}[theo]{Remark}
\def \leq {\leqslant}
\def \geq {\geqslant}
\def \gmm {\Upsilon}
\numberwithin{equation}{section}
\def\beq{\begin{equation}}
\def\eeq{\end{equation}}
\def\beqn{\begin{equation*}}
\def\eeqn{\end{equation*}}
\def\bea{\begin{eqnarray}}
\def\eea{\end{eqnarray}}
\def\bean{\begin{eqnarray*}}
\def\eean{\end{eqnarray*}}
\def\bary{\begin{array}}
\def\eary{\end{array}}
\title[Dissipative Boltzmann equation in 1-D]
{\textbf{One dimensional dissipative Boltzmann equation: measure solutions, cooling rate and self-similar profile}}
\author{R. Alonso, V. Bagland, Y. Cheng \& B. Lods }
\address{\textbf{Ricardo J. Alonso}, Departamento de Matem\'{a}tica, PUC-Rio, Rua Marqu\^{e}s de S\~ao Vicente 225, Rio de Janeiro, CEP 22451-900, Brazil.} \email{ralonso@mat.puc-rio.br}
\address{\textbf{Véronique Bagland}, Clermont Universit\'e, Universit\'{e} Blaise Pascal, Laboratoire de Math\'{e}matiques, CNRS UMR 6620,  BP 10448, F-63000 Clermont-Ferrand,
France.}\email{Veronique.Bagland@math.univ-bpclermont.fr}
\address{\textbf{Yingda Cheng}, Department of Mathematics, Michigan State University, East Lansing, MI 48824 U.S.A.}\email{ycheng@math.msu.edu}
\address{\textbf{Bertrand Lods}, Universit\`{a} degli
Studi di Torino \& Collegio Carlo Alberto, Department of Economics and Statistics, Corso Unione Sovietica, 218/bis, 10134 Torino, Italy.}\email{bertrand.lods@unito.it}
\begin{document}

\maketitle

\begin{abstract}
This manuscript investigates the following aspects of the one dimensional dissipative Boltzmann equation associated to variable hard-spheres kernel: (1) we show the optimal cooling rate of the model by a careful study of the system satisfied by the solution's moments, (2) give existence and uniqueness of measure solutions, and (3) prove the existence of a non-trivial self-similar profile, i.e. homogeneous cooling state, after appropriate scaling of the equation.  The latter issue is based on compactness tools in the set of Borel measures.  More specifically, we apply a dynamical fixed point theorem on a suitable stable set, for the model dynamics, of Borel measures.

\medskip

\noindent\textsc{Keywords.} Boltzmann equation, self-similar solution, measure solutions, dynamical fixed point.

\end{abstract}

\tableofcontents

\section{Introduction}\label{sec:intro}
In this document we study a standard one-dimensional dissipative Boltzmann equation associated to "variable hard potentials" interaction kernels.  The model is given by
\begin{equation}\begin{split}\label{eq:1}
\partial_{t} f(t,x)&=\Q(f,f)(t,x), \qquad (t,x)\in [0,\infty)\times\mathbb{R}\,,\\
f(0,x)&=f_{0}(x),
\end{split}\end{equation}
where the dissipative Boltzmann operator $\Q=\Q_{\gamma}$ is defined as
\begin{equation}
\label{eq:cheng}
\Q(f,f)(x):=\int_{\R}f\left(x-\a y\right)f\left(x+\b y\right)\,|y|^{\gamma}\d y-f(x)\int_{\R}f(x+y)\,|y|^{\gamma}\d y\,.
\end{equation}
The parameters of the model satisfy $\gamma > 0$, $\a \in (0,1)$, $\b=1-\a$ and will be \emph{fixed throughout the paper}. Such a model can be seen as a generalisation of the one introduced by Ben-Naïm
and Krapivsky \cite{Ben-Naim1} for $\gamma=0$ and 
happens to have many applications in physics, biology and economy, see for instance the process presented in \cite{AT,Ben-Naim1} with application to biology. 

The case $\gamma=0$ -- usually referred to as the Maxwellian interaction case -- is by now well understood \cite{CarrTo} and we will focus our efforts in extending several of the results known for that case ($\gamma=0$) to the more general model \eqref{eq:1}. Let us recall that, generally speaking, the analysis of  Boltzmann-like models with Maxwellian interaction essentially renders explicit formulas that allow for a very precise analysis  \cite{BC, BCT, BCG, pareschi, CarrTo, Ben-Naim1} because (1) moments solve closed ODEs and, (2) Fourier transform techniques are  relatively simple to implement.   In \cite{Ben-Naim1} a Brownian thermalization is added to the equation which permits a study of stationary solutions of \eqref{eq:1} for $\gamma=0$.  Here we are more interested in generalizing the works of \cite{BC, BCT, BCG, pareschi,CarrTo} that deal with the so-called self-similar profile which in the particular case of Mawellian interactions is unique and explicit.  Such a self-similar profile is the unique stationary solution of the \textit{self-similar equation} associated to \eqref{eq:1} and, by means of a suitable Fourier metric, it is possible to show (in the Maxwellian case) exponential convergence of the (time dependent) self-similar solution to this stationary profile, \cite{BCT, BCG, CarrTo}.  In particular, this means that solutions of \eqref{eq:1} with $\gamma=0$ approach exponentially fast the "back rescaled" self-similar profile as $t\to+\infty$.

Self-similarity is a general feature of dissipative collision-like equations.  Indeed, since the kinetic energy is continuously decreasing, solutions to \eqref{eq:1} converge as $t \to \infty$ towards a Dirac mass. As a consequence, one expects that a suitable time-velocity scale depending on the rate of dissipation of energy may render a better set up for the analysis.  For this reason, we expect that several of the results occurring for Maxwellian interactions remain still valid for \eqref{eq:1} with $\gamma >0$.  The organization of the document is as follows:  we finish this introductory material with general set up of the problem, including notation, scaling, relevant comments and the statement of the main results.  In Section 2 the Cauchy problem is studied.  The framework will be the space of probability Borel measures.  Such a framework is the natural one for equation \eqref{eq:1} as it is for kinetic models in general.  In one dimensional problems, however, we will discover that it is essential to work in this space in contrast to higher dimensional models, such as viscoelastic Boltzmann models in the plane or the space where one can avoid it and work in smaller spaces such as Lebesgue's spaces \cite{BCaG, villani, AL1, MiMo}.  This last fact proves to be a major difficulty in the analysis of the model.  The Cauchy problem is then based on a careful study of \textit{a priori} estimates for the moments of solutions of equation \eqref{eq:1} and standard fixed point theory.  In Section 3 we find the optimal rate of dissipation (commonly referred as Haff's law) which follows from a careful study of a lower bound for the moments using a technique introduced in \cite{AL1, AL2}.  In Section 4 we prove the existence of a non-trivial self-similar profile which is based, again, on the theory of moments and the use of  a novel dynamical fixed point result on a compact stable set of Borel probability measures \cite{BagLau07}. The key remaining argument is, then, to prove that the self-similar profile - which \emph{ a priori} is a measure - is actually an $L^{1}$-function. Needless to say, such stable set is engineered out of the moment analysis of Sections 2 \& 3.   In Section 5, numerical simulations are presented that illustrate the previous quantitative study of equation \eqref{gs} as well as some peculiar features of the self-similar profile $G$.  The simulations are based upon a discontinuous Galerkin (DG) scheme. The paper ends with some perspectives and open problems related to \eqref{eq:1}, in particular, its link to a recent kinetic model for rods alignment \cite{AT, Ben-Naim}.
\subsection{Self-similar equation and the long time asymptotic}
The weak formulation of the collision operator $\Q$ reads
\begin{align}\label{eq:Qw}
\begin{split}
\int_{\R} \Q\big(f,&f\big)(x)\psi(x)\d x \\
&=\dfrac{1}{2}\int_{\R}\int_{\R}f(x)f(y)\big|x-y\big|^{\gamma}\,\Big(\psi(\a x+\b y)+\psi(\b x+\a y)-\psi(x)-\psi(y)\Big)\d x\d y
\end{split}
\end{align}
for any suitable test function $\psi$.  In particular, plugging successively $\psi(x)=1$ and $\psi(x)=x$ into \eqref{eq:Qw} shows that the above equation \eqref{eq:1} conserves mass and momentum. Namely, for any reasonable solution $f(t,x)$ to \eqref{eq:1}, one has
\begin{equation} 
\label{eq:conservation}
\int_{\R} f(t,x)\d x=\int_{\R} f_{0}(x)\d x \qquad \text{ and } \qquad \int_{\R} x f(t,x)d x=\int_{\R} x f_{0}(x)\d x \qquad \forall t \geq 0\,.
\end{equation}
However, the second order moment is not conserved: indeed, plugging now $\psi(x)=|x|^{2}$ into \eqref{eq:Qw} one sees that
\begin{equation*}
\int_{\R}\Q(f,f)(x)\,|x|^{2}\d x=- \a\,\b\int_{\R^{2}} f(x)f(y)\,|x-y|^{2+\gamma}\d x\d y
\end{equation*}
since $|\a x+\b y|^{2}+|\a y+\b x|^{2}-|x|^{2}-|y|^{2}=-2\a\,\b\,|x-y|^{2}$ for any $(x,y) \in \R^{2}$.  Therefore, for any nonnegative solution $f(t,x)$ to \eqref{eq:1}, we get that the kinetic energy is non increasing
\begin{equation}
\label{kin}
\dfrac{\d }{\d t}E(t):=\dfrac{\d}{\d t}\int_{\R} f(t,x)\,|x|^{2}\d x=-\a\,\b\int_{\R^{2}}f(t,x)\,f(t,y)\,|x-y|^{2+\gamma}\d x \d y \leq 0\,.
\end{equation}
This is enough to prove that a non trivial stationary solution to problem \eqref{eq:1} exists.  Indeed, for any $x_{0} \in \R$, the Dirac mass $\delta_{x_{0}}$ is a steady (measure) solution to \eqref{eq:1}.  For this reason one expects the large-time behavior of the system to be described by self-similar solutions.  In order to capture such a self-similar behavior, it is customary to introduce the rescaling
\begin{equation}\label{eq:change}
V(t)\,g(s(t),\xi) = f(t,x)\,,\quad \xi=V(t)\,x
\end{equation} 
where $V(t)$ and $s(t)$ are strictly increasing functions of time satisfying $V(0)=1$, $s(0)=0$ and $\lim_{t \to \infty}s(t)=\infty$.  Under such a scaling, one computes the \textit{self-similar} equation as
\begin{equation*}
\begin{split}
\partial_{t}f(t,x)&=\left(\dot{V}(t)g(s,\xi)+V(t)\dot{s}(t)\partial_{s}g(s,\xi)+{\xi}\dot{V}(t)\partial_{\xi}g(s,\xi)\right)\bigg|_{s=s(t)\,,\,\xi=V(t)x}\\
&=\left(\dot{V}(t)\partial_{\xi}\left(\xi\,g(s,\xi)\right)+V(t)\dot{s}(t)\partial_{s}g(s,\xi)\right)\bigg|_{s=s(t)\,,\,\xi=V(t)x}
\end{split}
\end{equation*}
while the interaction operator turns into
\begin{equation}
\Q\big(f,f\big)(t,x)=V^{1-\gamma}(t)\Q\big(g,g\big)(s(t),V(t)x)\,.
\end{equation}
Consequently $f=f(t,x)$ is a solution to \eqref{eq:1} if and only if $g=g(s,\xi)$ satisfies
\begin{align}\label{Rodmr}
\dot{s}(t)V^{\gamma}(t)\partial_{s}g(s,\xi)  + \frac{\dot{V}(t)}{V^{1-\gamma}(t)}\partial_{\xi}\big(\xi\,g\big)(s,\xi) = \Q\big(g,g\big)(s,\xi).\nonumber
\end{align}
Choosing 
\begin{equation*}
V(t)= \left(1+ c\gamma\,t\right)^{\frac{1}{\gamma}}\;\; \quad
\text{ and } \qquad s(t)= \frac{1}{c\gamma} \log(1+ c\,\gamma\,t)\,, \qquad c > 0,
 \end{equation*}
it follows that $g$ solves
\begin{equation}\label{gs}
\partial_{s}g(s,\xi) + c\, \partial_{\xi}\left(\xi g(s,\xi)\right)=\Q\big(g,g\big)(s,\xi)\,.
\end{equation}
Thus, the argument of understanding the long time asymptotic of \eqref{eq:1} is simple: if there exists a unique steady solution $G$ to \eqref{gs}, then such a steady state $G$ should attract any solution to \eqref{gs} and, back scaling to the original variables
\begin{equation*}
f(t,x) \simeq V(t)G(V(t)x) \text{ as } {{t \to \infty}}\,.
\end{equation*}
in some suitable topology.  We give in this paper a first step towards a satisfactory answer to this problem; more specifically, we address here two main questions:
\begin{enumerate}[\textbf{Question} 1.]
\item  Determine  the optimal convergence rate of solutions to \eqref{eq:1} towards the Dirac mass centred in the center of mass $\overline{x}_{0}:=\int_{\R} x f_{0}(x)\d x$. The determination of this optimal convergence rate is achieved by identifying the \emph{optimal rate} of convergence of the moments $m_{k}(t)$ of $f(t,x)$ defined as
\begin{equation*}
m_{k}(t):=\int_{\R}|x-\overline{x}_{0}|^{k}\,f(t,x)\d x\,\qquad k \geq 0.
\end{equation*}
\item Prove the existence of a ``physical'' steady solution $G \in L^{1}_{\max(\gamma,2)}(\R)$ to \eqref{gs}, that is a function satisfying
\begin{equation}\label{steady}
c\,\dfrac{\d}{\d {\xi}} \left(\xi\,G(\xi)\right)=\Q(G,G)(\xi)\,, \qquad \xi \in \R\,,
\end{equation}
in a weak sense (where $c >0$ is arbitrary and, for simplicity, can be chosen as $c=1$). Note that equation \eqref{steady} has at least two solutions for any $\gamma>0$, the trivial one and the Dirac measure at zero.  None of them is a relevant steady solution  since both have energy zero which is a feature not satisfied by the dynamical evolution of \eqref{gs} (provided the initial measure is neither the trivial measure nor the Dirac measure).  
\end{enumerate}
Similar questions have already been addressed for the 3-D Boltzmann equation for granular gases with different type of forcing terms \cite{MiMo,GaPaVi,BiCaLo}.  For the inelastic Boltzmann equation in $\R^{3}$, the answer to \textbf{Question 1} is known as Haff's law proven in \cite{MiMo,AL1,AL2} for the interesting case of hard-spheres interactions (essentially the case $\gamma=1$). The method we adopt here is inspired by the last two references since it appears to be the  most natural to the equation.  Concerning \textbf{Question 2}, the existence and uniqueness (the latter in a weak inelastic regime) of solutions to \eqref{steady} has been established rigorously for hard-spheres interactions in \cite{MiMo,MiMo2}.  In the references \cite{MiMo,GaPaVi,BiCaLo}, also \cite{BagLau07, MiEs} in the context of coagulation problems, the strategy to prove the existence of solutions to problem similar to \eqref{steady} is achieved through the careful study of the associated evolution equation (equation \eqref{gs} in our context) and an application of the following dynamic version of \textit{Tykhonov fixed point theorem} (see \cite[Appendix A]{BagLau07} for a proof):
\begin{theo}[\textit{\textbf{Dynamic fixed point theorem}}]\label{GPV}
Let $\mathcal{Y}$ be a locally convex topological vector space and  $\mathcal{Z}$ a nonempty convex and  compact subset of\, ${\mathcal{Y}}$. If $(\mathcal{F}_t)_{t \ge 0}$ is  a continuous semi-group on $\mathcal{Z}$ such that $\mathcal{Z}$ is invariant under the action of $\mathcal{F}_t$  (that is $\mathcal{F}_t z  \in\mathcal{Z}$ for any $z \in {\mathcal{Z}}$ and $t \ge 0$), then, there exists $z_o \in \mathcal{Z}$ which is stationary under the action of $\mathcal{F}_t$ (that is $\mathcal{F}_t z_o=z_o$ for any $t \ge 0$).
\end{theo}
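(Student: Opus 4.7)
The plan is to combine the static Schauder--Tykhonov fixed point theorem, applied at each fixed time $\tau>0$, with a compactness argument that lets us pass to the limit as $\tau\to 0^{+}$. The semigroup law will then upgrade the individual fixed points of the discrete iterates $\mathcal{F}_{\tau}$ into a common fixed point of the whole family $(\mathcal{F}_t)_{t\geq 0}$.

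\textbf{Step 1.} For each $\tau>0$, the map $\mathcal{F}_\tau:\mathcal{Z}\to\mathcal{Z}$ is a continuous self-map of the nonempty, convex, compact subset $\mathcal{Z}$ of the locally convex space $\mathcal{Y}$. The Schauder--Tykhonov theorem therefore yields a point $z_\tau\in\mathcal{Z}$ with $\mathcal{F}_\tau z_\tau = z_\tau$. The semigroup property $\mathcal{F}_{t+s}=\mathcal{F}_t\circ\mathcal{F}_s$ then forces $\mathcal{F}_{k\tau}z_\tau=z_\tau$ for every integer $k\geq 0$, so each $z_\tau$ is already stationary under the discrete ``lattice'' $\tau\mathbb{N}$.

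\textbf{Step 2.} I would next pick a sequence $\tau_n\searrow 0$ and set $z_n:=z_{\tau_n}\in\mathcal{Z}$. Compactness of $\mathcal{Z}$ delivers a subnet $z_{n_\alpha}\to z_o\in\mathcal{Z}$. Given an arbitrary $s\geq 0$, choose for each index $\alpha$ an integer $k_\alpha\geq 0$ with $|k_\alpha\tau_{n_\alpha}-s|\leq\tau_{n_\alpha}$, so that $k_\alpha\tau_{n_\alpha}\to s$. Since $\mathcal{F}_{k_\alpha\tau_{n_\alpha}}z_{n_\alpha}=z_{n_\alpha}\to z_o$, if the semigroup enjoys joint continuity in $(t,z)$ one has $\mathcal{F}_{k_\alpha\tau_{n_\alpha}}z_{n_\alpha}\to \mathcal{F}_s z_o$, whence $\mathcal{F}_s z_o=z_o$ for every $s\geq 0$. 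The candidate $z_o$ is therefore the common fixed point.

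\textbf{Step 3.} The delicate point, and the one I expect to be the main obstacle, is precisely the last convergence. The natural route is to insert the intermediate term $\mathcal{F}_s z_{n_\alpha}$ and estimate $\mathcal{F}_s z_{n_\alpha}-\mathcal{F}_s z_o$ by continuity of $\mathcal{F}_s$ in its state variable, and $\mathcal{F}_{k_\alpha\tau_{n_\alpha}}z_{n_\alpha}-\mathcal{F}_s z_{n_\alpha}$ by continuity in the time variable. This decomposition passes to the limit cleanly only if the time-continuity is uniform in $z$ on $\mathcal{Z}$. I would therefore interpret the hypothesis ``continuous semi-group on $\mathcal{Z}$'' as including this uniform time-continuity (which is, in any event, automatic on compact subsets for a strongly continuous semigroup under equicontinuity of $\{\mathcal{F}_t\}_{t\in[0,T]}$). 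Should only separate continuity be assumed, one would need to exploit the compactness of $\mathcal{Z}$ via an Ascoli-type argument to upgrade to the joint continuity that the passage to the limit requires.
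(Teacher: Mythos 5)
The paper itself does not prove Theorem~\ref{GPV}; it simply cites \cite[Appendix~A]{BagLau07} for the argument. Your proof---Schauder--Tykhonov at each time scale $\tau$, the semigroup law to upgrade to stationarity on $\tau\mathbb{N}$, compactness of $\mathcal{Z}$ to extract a limit point, and joint continuity of $(t,z)\mapsto\mathcal{F}_t z$ to identify the limit as a common fixed point---is exactly the argument in that reference, and your reading of ``continuous semi-group'' as joint continuity in $(t,z)$ is the one the hypothesis is meant to convey.
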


In the aforementioned references, the natural approach consists in applying Theorem \ref{GPV} to $\mathcal{Y}=L^{1}$ endowed with its weak topology and consider for the subset $\mathcal{Z}$ a convex set which includes an upper bound for some of the moments and some $L^{p}$-norm, with $p>1$, which yield the desired compactness in $\mathcal{Y}$.  As a consequence, with such approach a crucial point in the analysis is to determine uniform $L^{p}$-norm bounds for the self-similar evolution problem.  This last particular issue, if true, appear to be quite difficult to prove in the model \eqref{gs}, mainly because the lack of angular averaging in the 1-D interaction operator $\Q$ as opposed to higher dimensional interaction operators.  This problem is reminiscent of related 1-D interaction operators associated to coagulation--fragmentation problems, for instance in Smoluchowski equation, for which propagation of $L^{p}$ norms is hard to establish, see \cite{LM} for details.  Having this in mind, it appears to us more natural to work with measure solutions and considering then $\mathcal{Y}$ as a suitable space of real Borel measures endowed with the weak-$\star$ topology for which the compactness will be easier to establish.  Of course, the main difficulty will then be to determine that the fixed point provided by Theorem \ref{GPV}  is not the Dirac measure at zero (the trivial solution is easily discarded by mass conservation).  In fact, we will prove that this steady state is a $L^{1}$ function (see Theorem \ref{theo:meas-L1}).  Let us introduce some notations before entering in more details. 
\subsection{Notations} Let us introduce the set $\M_{s}(\R)$ as the Banach space of real Borel measures on $\R$ with finite total variation of order $s$ endowed with the norm $\|\cdot\|_{s}$ defined as
\begin{equation*}  
\|\mu\|_{s}:=\int_{\R}\la x \ra^{s} \,|\mu\,|(\d x) < \infty\,, \qquad \text{ with  } \quad \la x \ra:=\left(1+|x|^{2}\right)^{\frac{1}{2}} \qquad \forall x \in \R\,,
\end{equation*}
where the positive Borel measure $|\mu|$ is the total variation of $\mu$.  We also set
\begin{equation*}
\M^{+}_{s}(\R)=\{\mu \in \M_{s}(\R)\,;\,\mu \geq 0\}\,,
\end{equation*}
and denote by $\P(\R)$ the set of probability measures over $\R$.  For any $k \geq 0$, define the set
\begin{equation*}
\P_{k}(\R)=\left\{\mu \in \P(\R)\,;\,\int_{\R}|x|^{k}\mu(\d x) < \infty\right\}\,.
\end{equation*}
For any $\mu \in \P_{k}(\R)$ and any $0 \leq p \leq k$, let us introduce the $p$-moment $M_{p}\left(\mu\right):=\int_{\R}|x|^{p}\mu(\d x)$.  If $\mu$ is absolutely continuous with respect to the Lebesgue measure with density $f$, i.e. $\mu(\d x)=f(x)\d x$, we simply denote $M_{p}(f)=M_p(\mu)$ for any $p \geq 0$.  We also define, for any $k \geq 1$, the set
\begin{equation*}
\P_{k}^{0}(\R)=\left\{\mu \in \P_{k}(\R)\,;\,\int_{\R}x \mu(\d x)=0\right\}\,.
\end{equation*}
In the same way, we set $L^{1}_{k}(\R)=L^{1}(\R) \cap \M_{k}(\R)$, for any $k \geq 0$.
Moreover, we introduce the set $L^{\infty}_{-s}$ $(s \geq 0)$ of locally bounded  Borel functions $\varphi$ such that
\begin{equation*}
\|\varphi\|_{L^{\infty}_{-s}}:=\sup_{x \in \R} |\varphi(x)\,|\,\la x \ra^{-s}\, < \infty\,.
\end{equation*}
For any $p \geq 1$ and $\mu,\nu \in \P_{p}(\R)$, we recall the definition of the Wasserstein distance of order $p$, $W_{p}(\mu,\nu)$ between $\mu$ and $\nu$ 
by
\begin{equation*}
W_{p}(\mu,\nu)=\left(\inf_{\pi \in \Pi(\mu,\nu)}\int_{\R^{2}}|x-y|^{p}\pi(\d x,\d y)\right)^{\frac{1}{p}}\,,
\end{equation*} 
where $\Pi(\mu,\nu)$ denotes the set of all joint probability measures $\pi$ on $\R^{2}$ whose marginals are $\mu$ and $\nu$. For the peculiar case $p=1$, we shall address the \textit{first order Wasserstein distance} $W_{1}(\mu,\nu)$ as \textit{Kantorovich--Rubinstein distance}, denoted $d_{\mathrm{KR}}$, i.e. $d_{\mathrm{KR}}(\mu,\nu)=W_{1}(\mu,\nu)$. We refer to  \cite[Section 7]{vil} and \cite[Chapter 6]{vil2} for more details on Wasserstein distances.  The Kantorovich--Rubinstein duality asserts that
\begin{equation*}
d_{\mathrm{KR}}(\mu,\nu)=\sup_{\varphi \in \mathrm{Lip}_{1}(\R)}\int_{\R}\varphi(x)(\mu-\nu)(\d x)
\end{equation*}
where $\mathrm{Lip}_{1}(\R)$ denotes the set of Lipschitz functions $\varphi$ such that
\begin{equation*}
\|\varphi\|_{\mathrm{Lip}(\R)}=\sup_{x \neq y}\dfrac{|\varphi(x)-\varphi(y)|}{|x-y|} \leq 1\,.\end{equation*}
For a given $T >0$ and a given $k \geq 0$, we shall indicate as
$\C_{\text{weak}}([0,T],\P_{k}(\R))$
the set of continuous mappings from  $[0,T]$ to  $\P_{k}(\R)$ where the latter is endowed with the weak-$\star$ topology. 
\subsection{Collision operator and definition of measure solutions}
We extend the definition \eqref{eq:Qw} to nonnegative Borel measures; namely, given $\mu, \nu \in \M_{\gamma}^{+}(\R)$, let
\begin{equation} \label{eq:Qmu}
\langle \Q(\mu,\nu)\,;\,\varphi\rangle
:=\frac{1}{2}\int_{\R^{2}} \big|x-y\big|^{\gamma}\,\Delta \varphi(x,y)
\mu(\d x)\nu(\d  y)
\end{equation}
for any test function $\varphi \in \mathcal{C}(\R) \cap L^{\infty}_{-\gamma}(\R)$ where
\begin{equation*}
\Delta \varphi\big(x,y\big):=\varphi\big(\a x+\b y\big)+\varphi\big(\b x+\a y\big)-\varphi\big(x\big)-\varphi\big(y\big), \qquad (x,y) \in \R^{2}\,.
\end{equation*} 
A natural definition of measure solutions to \eqref{eq:1} is the following, (see \cite{Lu}).
\begin{defi}\label{defi:weak}
Let $\gamma > 0$, $\gmm=\max(\gamma,2)$, $\mu_{0} \in \M_{\gmm}^{+}(\R)$  and $(\mu_{t})_{t \geq 0} \subset \M_{\gmm}^{+}(\R)$ be given. We say that $(\mu_{t})_{t \geq 0}$ is a measure weak solution to \eqref{eq:1} associated to the initial datum $\mu_{0}$ if it satisfies
\begin{enumerate}
\item $\sup_{t \geq 0}\|\mu_{t}\|_{\gmm} \leq \|\mu_{0}\|_{\gmm}$,
\beq\label{eq:cons}
\int_{\R}\mu_{t}(\d x)=\int_{\R}\mu_{0}(\d x) \qquad \text{ and } \qquad \int_{\R} x\mu_{t}(\d x)=\int_{\R}x \mu_{0}(\d x)\qquad \forall t > 0\,.
\eeq
\item for any test-function $\varphi \in \C_{b}(\R):=\C(\R) \cap L^{\infty}(\R)$, the following hold
\begin{enumerate}[i)]
\item the mapping $t  \mapsto \langle \Q(\mu_{t},\mu_{t})\,;\,\varphi\rangle$ belongs to $\C\big([0,\infty)\big) \cap L^{1}_{\mathrm{loc}}\big([0,\infty)\big)$,  
\item for any $t \geq 0$ it holds 
\begin{equation}\label{eq:sol}
\int_{\R}\varphi(x)\mu_{t}(\d x)=\int_{\R}\varphi(x)\mu_{0}(\d x)  + \int_{0}^{t}\langle \Q\big(\mu_{\tau},\mu_{\tau}\big)\,;\,\varphi\rangle\,\d\tau\,.
\end{equation}
\end{enumerate}
\end{enumerate}
\end{defi}
\noindent
Notice that if $\mu_{t} \in \M_{\gmm}^{+}(\R)$ for any $t \geq 0$, then
\begin{equation*}
\int_{\R^{2}} \big|x-y\big|^{\gamma}\,\big|\Delta \varphi(x,y)\big|\,\mu_{t}(\d x) \mu_{t}(\d y) \leq 4\|\varphi\|_{\infty}\|\mu_{t}\|_{\gamma}^{2}< \infty
\end{equation*}
for any $\varphi \in \C_{b}(\R)$ and $t \geq 0$.  This shows that $\la \Q(\mu_{t},\mu_{t})\,;\,\varphi\ra$ is well-defined for any $t \geq 0$ and any $\varphi \in \C_{b}(\R)$.  Similarly, the notion of measure solution to \eqref{steady} is given in the following statement. 
\begin{defi}\label{defi:station}
A measure $\mub \in \mathcal{P}_{\max(\gamma,2)}^{0}(\R)$ is a solution to \eqref{steady} if 
\begin{equation}\label{meas_form}
-  \int_{\R} \xi \phi'(\xi)\mub(\d\xi) =  \frac{1}{2}\int_{\R^2} |\xi-\eta|^\gamma 
\big(\phi\left(\a \xi + \b \eta\right)+\phi\left(\a \eta+\b \xi\right)-\phi(\xi)- \phi(\eta)\big) \mub(\d\xi)\,\mub(\d\eta)
\end{equation}
for any $\phi\in\mathcal{C}^1_b (\R)$ where $\phi'$ stands for the derivative 
of $\phi$. 
\end{defi} 
Notice that, by assuming $\|\mub\|_{0}=1$, we naturally discard the trivial solution $G=0$ to \eqref{steady}.
\subsection{Strategy and main results}
 Fix $\gamma >0$, $\gmm=\max(\gamma,2)$. Thanks to the conservative properties \eqref{eq:cons}, we shall assume in the sequel, and without any loss of generality, that the initial datum $\mu_{0} \in \M_{\gmm}^{+}(\R)$ is such that
\begin{equation*}
\int_{\R}\mu_{0}(\d x)=1 \qquad \text{ and }  \qquad \int_{\R}x \mu_{0}(\d x)=0\,,
\end{equation*}
i.e. $\mu_{0} \in \P_{\gmm}^{0}(\R)$.  This implies that any weak measure solution $(\mu_{t})_{t\geq 0}$ to \eqref{eq:1} associated to $\mu_{0}$ is such that
\begin{equation*}
\mu_{t} \in \P_{\gmm}^{0}(\R) \qquad \forall t \geq 0\,.
\end{equation*} 
\begin{theo}\label{theo:cauchy} Let $\mu_{0} \in \P_{\gmm}^{0}(\R)$ be given initial datum.  Then, there exists a measure weak solution $(\mu_{t})_{t\geq 0}$ to \eqref{eq:1} associated to $\mu_{0}$ in the sense of Definition \ref{defi:weak}.  Moreover, 
\begin{align*}
\int_{\R}\mu_{t}(\d x)=\int_{\R}\mu_{0}(\d x)=1\,, &\qquad \int_{\R}x\mu_{t}(\d x)=\int_{\R}x \mu_{0}(\d x) =0\,, \\
\text{and }\;\;\int_{\R}|x|^{\gmm}\mu_{t}(\d x) &\leq \int_{\R}|x|^{\gmm}\mu_{0}(\d x) \qquad \forall t \geq 0\,.
\end{align*}
Moreover, if $\mu_0 \in \bigcap_{k\geq 0} \P^0_k(\R)$ then $(\mu_{t})_{t} \subset \bigcap_{k \geq 0}\P^{0}_{k}(\R)$. 
If additionally there exists $\varepsilon > 0$ such that
\begin{equation}\label{eq:exp}
\int_{\R} \exp(\varepsilon |x|^{\gamma})\mu_{0}(\d x) < \infty\,,
\end{equation}
then, such a measure weak solution is unique.  Furthermore, if $\mu_{0}$ is absolutely continuous with respect to the Lebesgue measure, i.e. $\mu_{0}(\d x)=f_{0}(x)\d x$ with $f_{0} \in L^{1}_{\gmm}(\R)$ then $\mu_{t}$ is absolutely continuous with respect to the Lebesgue measure for any $t \geq 0$.  That is, there exists $(f_{t})_{t\geq 0} \subset L^{1}_{\gmm}(\R)$ such that $\mu_{t}(\d x)=f_{t}(x)\d x$ for any $t \geq 0$.
\end{theo}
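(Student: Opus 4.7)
The plan is to build the measure solution by kernel regularization. For each $n \in \N$ I replace $|y|^\gamma$ by the bounded kernel $|y|^\gamma \wedge n$; the corresponding collision operator $\Q_n$ is globally Lipschitz on $\mathcal{M}^+(\R)$ for the total variation/bounded Lipschitz norm (because $\Delta\varphi$ is bounded for $\varphi \in \C_b$ and the kernel is now bounded). A standard Picard iteration in $\C([0,T],\mathcal{M}^+(\R))$ yields a unique global mild solution $(\mu^n_t)_t$ preserving non-negativity, mass and momentum. I would work directly in $\P^0(\R)$ thanks to \eqref{eq:Qw} with $\varphi=1,x$.

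The second step is $n$-uniform moment control. Plugging $\varphi(x)=|x|^k$ into the weak formulation and using the Povzner-type identity $|\alpha x + \beta y|^2 + |\beta x + \alpha y|^2 - |x|^2 - |y|^2 = -2\alpha\beta|x-y|^2$ gives the non-increase of $M_2$; for general $k \geq 2$ the convexity estimate $\a^k+\b^k < 1$ yields
\begin{equation*}
\int_\R \Q_n(\mu,\mu)|x|^k \d x \leq -\delta_k\, M_{k+\gamma}(\mu) + C_k\sum_{j < k} M_{j+\gamma}(\mu)\,M_{k-j}(\mu),
\end{equation*}
with $\delta_k=1-\a^k-\b^k>0$, from which standard ODE arguments yield $M_\gmm(\mu^n_t) \leq M_\gmm(\mu_0)$ and propagation of all moments when $\mu_0 \in \bigcap_k \P^0_k(\R)$, all uniformly in $n$. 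The uniform $M_\gmm$ bound gives tightness, hence weak-$\star$ compactness, and the equation together with the a priori bounds supplies equicontinuity in $t$, so Arzelà–Ascoli produces a limit $(\mu_t)_t \in \C_{\text{weak}}([0,\infty),\P^0_\gmm(\R))$. Passage to the limit in \eqref{eq:sol} is legitimate because $(x,y)\mapsto |x-y|^\gamma \Delta\varphi(x,y)$ is continuous with growth $\lesssim \langle x \rangle^\gamma + \langle y \rangle^\gamma$, which is dominated by the uniform $M_\gmm$ bound.

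For uniqueness under \eqref{eq:exp}, I would first show that the exponential moment $\int \exp(\varepsilon' |x|^\gamma)\mu_t(\d x)$ is propagated along the flow for some $\varepsilon' \leq \varepsilon$, by combining the moment hierarchy above with a generating-function argument. I would then measure the distance between two solutions $\mu_t,\nu_t$ by the Kantorovich–Rubinstein distance $d_{\mathrm{KR}}$ and estimate, for $\varphi \in \mathrm{Lip}_1(\R)$,
\begin{equation*}
\langle \Q(\mu_t,\mu_t)-\Q(\nu_t,\nu_t)\,;\,\varphi\rangle
\end{equation*}
by symmetrizing in the bilinear form \eqref{eq:Qmu} and coupling the two marginals via an optimal transport plan. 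The map $(x,y)\mapsto |x-y|^\gamma \Delta\varphi(x,y)$ is locally Lipschitz with growth of order $\gamma$, and the propagated exponential moment precisely tames this growth, yielding $\tfrac{\d}{\d t} d_{\mathrm{KR}}(\mu_t,\nu_t) \leq C(t) d_{\mathrm{KR}}(\mu_t,\nu_t)$ and hence uniqueness by Gronwall.

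For absolute continuity, I would use the Duhamel/mild representation, writing, with $\Sigma_t(x)=\int_0^t \int_\R |x-y|^\gamma \mu_\tau(\d y)\d\tau$,
\begin{equation*}
\mu_t = e^{-\Sigma_t}\mu_0 + \int_0^t e^{-(\Sigma_t-\Sigma_\tau)}\,\Q^+(\mu_\tau,\mu_\tau)\,\d\tau,
\end{equation*}
where $\Q^+$ denotes the gain part of \eqref{eq:cheng}. The first term is absolutely continuous as $\mu_0$ is. For the gain, if $\mu_\tau(\d x) = f_\tau(x)\d x$ the change of variables $(x,y)\mapsto(\a x+\b y,y)$ (invertible because $\a\in(0,1)$) shows that $\Q^+(f_\tau,f_\tau)\in L^1_{\gmm}(\R)$ with an explicit density. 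A fixed-point argument on the Duhamel equation in $\C([0,T],L^1_\gmm(\R))$, whose contraction is inherited from the measure setting just proved, then produces $(f_t)_t$ with $\mu_t=f_t \d x$. The main obstacle in the whole argument is the uniqueness step: in 1D there is no angular averaging to exploit, so the exponential moment assumption is essential to close the Gronwall estimate for $d_{\mathrm{KR}}$, and one has to first establish that such a moment is indeed propagated along the dynamics.
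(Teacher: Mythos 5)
Your existence route (truncate the kernel to $|y|^\gamma\wedge n$, run Picard iteration for each $n$, then pass to the limit using $n$-uniform moment bounds and weak-$\star$ compactness) is a legitimate alternative to what the paper does: there, the Cauchy problem is first solved in $L^1_{2+\gamma+\delta}$ via Bressan's stable-set method (sub-tangent and one-sided Lipschitz conditions on $\Omega_{K,\delta}$), and measure data are then approximated by $L^1$ densities following Lu--Mouhot. Both can work; yours avoids the one-sided Lipschitz verification at the cost of $n$-uniform \emph{a priori} estimates after truncation. One detail: the moment inequality you invoke, with a \emph{positive} remainder $C_k\sum_{j<k}M_{j+\gamma}M_{k-j}$, is the Bobylev--Gamba--Panferov Povzner estimate used in Section~3 for lower bounds; it does not by itself give $M_{\gmm}(\mu^n_t)\leq M_{\gmm}(\mu_0)$. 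For monotonicity you need the remainder-free version
\begin{equation*}
\langle \Q(\mu,\mu)\,;\,|\cdot|^{k}\rangle \leq -\tfrac12\big(1-\a^{k}-\b^{k}\big)\,M_{k+\gamma}(\mu)\qquad (k\geq2),
\end{equation*}
which comes from the convexity inequality \eqref{elem} and Jensen using that $\mu$ has zero mean. Your Duhamel-representation argument for absolute continuity is also a viable alternative to simply invoking the $L^1$ well-posedness.

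The genuine gap is in the uniqueness step. You assert that the propagated exponential moment tames the growth of $|x-y|^{\gamma}\Delta\varphi(x,y)$ so as to yield a linear Gronwall inequality $\tfrac{\d}{\d t}d_{\mathrm{KR}}(\mu_t,\nu_t)\leq C(t)\,d_{\mathrm{KR}}(\mu_t,\nu_t)$. That is not achievable. When you couple $\mu_t$ and $\nu_t$ by an optimal plan $\pi_t$, the problematic contribution is of the form
\begin{equation*}
\int_{\R^{2}}\big(|x|^{\gamma}+|v|^{\gamma}\big)\,|x-v|\,\pi_{t}(\d x,\d v)\,,
\end{equation*}
and the weight $|x|^{\gamma}$ is unbounded: the optimal plan may place nearly all of its transport cost on pairs with $|x|$ large and $|x-v|$ tiny, so this integral is \emph{not} controlled by $C\,W_1(\mu_t,\nu_t)$, exponential moment or not. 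What works (and is the paper's Proposition \ref{prop:stab}) is to split at a threshold $r$: the region $\min(|x|,|v|)\leq r$ gives $2r^{\gamma}W(t)$, while the complementary region is exponentially small in $r^{\gamma}$ thanks to \eqref{eq:exp}. Optimizing $r^{\gamma}\sim |\log W(t)|$ produces the log-Lipschitz bound
\begin{equation*}
\tfrac{\d}{\d t}W(t)\;\lesssim\; W(t)\big(1+|\log W(t)|\big)\,,
\end{equation*}
and since $\Phi(r)=r(1+|\log r|)$ is only Osgood (it satisfies $\int_0^1\d r/\Phi(r)=\infty$, not a Lipschitz condition at $0$), one must use the nonlinear Gronwall lemma \cite[Lemma 3.4]{chemin} rather than the linear one. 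As written, your Gronwall step does not close; replacing it with the log-Lipschitz/Osgood argument is the essential fix, and it also explains why the exponential tail hypothesis \eqref{eq:exp} — rather than merely high polynomial moments — is needed.
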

We prove Theorem \ref{theo:cauchy} following a strategy introduced in \cite{Lu}-\cite{FM} for the case of Boltzmann equation with hard potentials (with or without cut-off). The program consists essentially in the following steps. (1) Establish \textit{a priori} estimates for measure weak solutions to \eqref{eq:1} concerning the creation and propagation of algebraic moments, (2) construct measure weak solutions to \eqref{eq:1} by approximation of $L^{1}$-solutions.  Step (1) helps proving that such approximating sequence converges in the  weak-$\star$ topology.  Finally, (3) for the uniqueness of measure weak solution we adopt a strategy developed in \cite{FM} and based upon suitable Log-Lipschitz estimates for the Kantorovich--Rubinstein distance between two solutions of \eqref{eq:1}.\\

As far as \textbf{Question 1} is concerned, we establish the optimal decay of the moments of the solutions to \eqref{eq:1} by a suitable comparison of ODEs.  Such techniques are natural and have been applied to the study of Haff's law for 3-D granular Boltzmann equation in \cite{AL1}.  The main difficulty is to provide optimal lower bounds for the moments, see Propositions \ref{l2} \& \ref{p2}.  Essentially, we obtain that
\begin{equation*}
m_{k}(t)=\int_{\R}|x|^{k}\,\mu_{t}(\d x)\approx C_{k}\,t^{-\frac{k}{\gamma}} \quad \text{ as }\; t \to \infty\,,
\end{equation*}
see Theorem \ref{theo:decmom} for a more precise statement. Such a decay  immediately translates into convergence of $\mu_{t}$ towards $\delta_{0}$ in the Wasserstein topology.\\

Regarding \textbf{Question 2}, once the fixed point Theorem \ref{GPV} is at hand, the key step is to engineer a suitable \textit{stable} compact set $\mathcal{Z}$.  Compactness is easily achieved in $\mathcal{Y}=\P_{\max(\gamma,2)}(\mathbb{R})$ endowed with weak-$\star$ topolgy; only uniformly boundedness of some moments suffices.  However, $\mathcal{Y}$ must overrule the possibility that the fixed point will be a plain Dirac mass located at zero.  This is closely related to the sharp lower bound found  for the moments in \textbf{Question 1}.  In such a situation, a serie of simple observations on the regularity of the solution to \eqref{meas_form} proves that such a steady state is actually a $L^{1}$-function. Namely, one of the most important step in our strategy is the following observation:
\begin{theo}\label{theo:meas-L1}
Any steady measure solution $\mub \in \P_{\max(\gamma,2)}^{0}(\R)$ to \eqref{steady} such that
\begin{equation}\label{eq:mom}
\mathbf{m}_{\gamma}:=\int_{\R} |\xi|^{\gamma}\mub(\d \xi) > 0
\end{equation}
is absolutely continuous with respect to the Lebesgue measure over $\R$, i.e. there exists some nonnegative $G \in L^{1}_{\max(\gamma,2)}(\R)$ such that
\begin{equation*}
\mub(\d\xi)=G(\xi)\d\xi\,.
\end{equation*}
\end{theo}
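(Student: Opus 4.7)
The strategy is to recast the weak stationary identity \eqref{meas_form} as a distributional equation $(\xi\,\mub)' = F - L\,\mub$ on $\R$, deduce that $\xi\,\mub$ must be absolutely continuous with a $\mathrm{BV}$-density, and then use the hypothesis $\mathbf{m}_\gamma>0$ to exclude a possible atom at the origin. Here I set $F := \Q^+(\mub,\mub)$ for the gain part of the collision operator and $L(\xi):=\int_\R|\xi-\eta|^\gamma\,\mub(\d\eta)$. The assumption $\mub\in\P^0_{\max(\gamma,2)}(\R)$ guarantees $\iint|\xi-\eta|^\gamma\,\mub(\d\xi)\mub(\d\eta)<\infty$, so both $F$ and $L\,\mub$ are finite nonnegative Borel measures on $\R$.

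With $c=1$ and reading the left-hand side of \eqref{meas_form} as $\la(\xi\,\mub)',\phi\ra=-\int\xi\phi'(\xi)\,\mub(\d\xi)$ for $\phi\in\mathcal{C}^1_b(\R)$, one gets
\[
(\xi\,\mub)' \;=\; F - L\,\mub \qquad \text{in }\mathcal{D}'(\R).
\]
Since the right-hand side is a signed Radon measure, the classical characterization of distributions with measure-valued derivative (antidifferentiating via $\xi\mapsto(F-L\,\mub)((-\infty,\xi])$) produces some $H\in\mathrm{BV}(\R)$ with $\xi\,\mub(\d\xi)=H(\xi)\,\d\xi$. In particular $\xi\,\mub$ is absolutely continuous and carries no atoms, which forbids $\mub$ from having an atom at any $\xi_0\neq0$ (such an atom would produce $\xi_0\alpha_0\,\delta_{\xi_0}\neq 0$ in $\xi\,\mub$). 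Dividing by $\xi$ shows $\mub$ is absolutely continuous on $\R\setminus\{0\}$ with density $g(\xi):=H(\xi)/\xi$.

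To rule out an atom at the origin, write $\mub=\alpha\,\delta_0+g\,\d\xi$ with $\alpha\geq 0$ and $g\geq 0$ in $L^1(\R)$. Local integrability of $g$ near the origin, combined with $g=H/\xi$ and the one-sided limits $H(0^\pm)$ provided by $\mathrm{BV}$-regularity, forces $H(0^+)=H(0^-)=0$, since any non-zero limit would give $g$ an inadmissible $1/|\xi|$ singularity (as $H(\xi)\to H(0^\pm)$ and $g\geq 0$). Consequently $H$ has no jump at $0$, and equating the atomic coefficients at $0$ on both sides of $\d H=F-L\,\mub$ yields
\[
0 \;=\; H(0^+)-H(0^-) \;=\; F(\{0\}) - L(0)\,\mub(\{0\}) \;=\; F(\{0\}) - \mathbf{m}_\gamma\,\alpha.
\]
A direct inspection shows $F(\{0\})=0$: the preimages of $\{0\}$ under the post-collisional maps $(\xi,\eta)\mapsto a\xi+b\eta$ and $(\xi,\eta)\mapsto b\xi+a\eta$ are two lines through the origin, and since by the previous paragraph all atoms of $\mub$ lie at $0$, the product $\mub\otimes\mub$ charges these lines only at the point $(0,0)$, where the weight $|\xi-\eta|^\gamma$ vanishes (the cross and purely absolutely continuous pieces of $\mub\otimes\mub$ give no mass to any one-dimensional set). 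The hypothesis $\mathbf{m}_\gamma>0$ then forces $\alpha=0$, so $\mub$ is absolutely continuous on $\R$, say $\mub=G\,\d\xi$, and $G\in L^1_{\max(\gamma,2)}(\R)$ follows directly from the moment bound built into $\P^0_{\max(\gamma,2)}(\R)$.

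The principal difficulty is the interlocking of the two stages: the $\mathrm{BV}$-representation of $\xi\,\mub$ rests on $(\xi\,\mub)'$ being a genuine Radon measure (not a higher-order distribution), and the cleanup at $\xi=0$ requires simultaneously the nonnegativity and local integrability of $g$ and the degeneracy $|\xi-\eta|^\gamma=0$ on the diagonal, which together kill $F(\{0\})$ precisely once all possible atoms of $\mub$ have been isolated at the origin.
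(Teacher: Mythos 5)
Your first two steps coincide with the paper's (Propositions \ref{prop:Hmu} and \ref{prop:decomp}): the weak stationary identity is read as $(\xi\,\mub)'=\Q(\mub,\mub)$ in $\mathcal{D}'(\R)$, which forces $\xi\,\mub$ to be an $L^1$ density of locally bounded variation, and Lebesgue decomposition together with $\xi\,\mub_s=0$ confines the singular part to an atom $\kappa_0\delta_0$. Where you diverge is in ruling out that atom. The paper plugs $\mub=G\,\d\xi+\kappa_0\delta_0$ back into the weak form, rewrites the identity as $\kappa_0\,\mathbf{m}_\gamma\,\phi(0)=\int(A\phi+B\phi')\,\d\xi$ with $A,B\in L^1(\R)$, tests against $\phi(\cdot/\varepsilon)$, and lets $\varepsilon\to 0$ by dominated convergence to get $\kappa_0\,\mathbf{m}_\gamma=0$. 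You instead argue on the measure-theoretic side: nonnegativity and integrability of $g=H/\xi$ near $0$ force $H(0^\pm)=0$, hence the atom of $H'$ at $\{0\}$ vanishes; writing $H'=\Q^+(\mub,\mub)-L\,\mub$ and matching atoms gives $\Q^+(\mub,\mub)(\{0\})=\mathbf{m}_\gamma\,\kappa_0$; and a direct computation of $\Q^+(\mub,\mub)(\{0\})$ (the post-collisional preimages of $\{0\}$ are lines, which carry no $\mub\otimes\mub$-mass except at $(0,0)$ where the weight $|\xi-\eta|^\gamma$ vanishes) shows it is zero. Both routes are valid. The paper's test-function argument is a touch shorter and avoids discussing the atomic structure of $\Q^+$ altogether, at the small price of verifying that $\Q(G,G)$ is an $L^1$-function; your argument makes the mechanism $\Q^+(\mub,\mub)(\{0\})=0$ explicit, which is conceptually illuminating but requires the careful case analysis of $\mub\otimes\mub$ on the collision lines. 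Either way, $G\in L^1_{\max(\gamma,2)}(\R)$ is immediate from $\mub\in\P^0_{\max(\gamma,2)}(\R)$.
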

In other words, any solution to \eqref{steady} lying in $\P_{\max(\gamma,2)}^{0}(\R)$ different from a Dirac mass must be a regular measure.  This leads to our main result:
\begin{theo}\label{existence}
For any $\gamma > 0$, there exists  $G \in L^{1}_{\max(\gamma,2)}(\R)$ which is a steady solution to \eqref{steady} in the weak sense. 
\end{theo}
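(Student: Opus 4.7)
The plan is to obtain $G$ as a fixed point of the semi-group associated to the self-similar equation \eqref{gs} via the dynamic Tykhonov theorem \ref{GPV}, and then upgrade the resulting stationary measure to an $L^1$ density by means of Theorem \ref{theo:meas-L1}. Working in the locally convex space $\mathcal{Y}=\M_{\gmm}(\R)$ endowed with its weak-$\star$ topology, the Cauchy theory of Theorem \ref{theo:cauchy} adapted to \eqref{gs} (the only modification being the extra linear drift $c\,\partial_{\xi}(\xi g)$) yields a well-defined continuous semi-group $(\mathcal{F}_t)_{t\ge 0}$ on $\P^{0}_{\gmm}(\R)$: for $\mu_{0}\in\P^{0}_{\gmm}(\R)$ one sets $\mathcal{F}_{t}\mu_{0}:=g(t,\cdot)$, where $g$ is the weak measure solution to \eqref{gs} with initial datum $\mu_{0}$.

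The heart of the proof is the construction of a nonempty, convex, weak-$\star$ compact subset $\mathcal{Z}\subset\P^{0}_{\gmm}(\R)$ that is invariant under $(\mathcal{F}_t)_{t\ge 0}$ and that excludes the Dirac mass $\delta_{0}$. Convexity is automatic if $\mathcal{Z}$ is cut out by affine moment inequalities, while weak-$\star$ compactness combined with preservation of $\P^{0}_{\gmm}(\R)$ under limits will be guaranteed by a uniform upper bound $M_{k}(\mu)\le A_{k}$ for some $k>\gmm$, providing tightness and lower semi-continuity closedness. Propagation of such a higher moment for \eqref{gs} follows from the moment ODE analysis already performed for \eqref{eq:1}: the collision operator contributes a negative term of order $M_{k+\gamma}$ against lower order positive terms, while the self-similar drift adds $-kc\,M_{k}$, so for $A_{k}$ sufficiently large the inequality $\tfrac{d}{dt}M_{k}(g(t))\le 0$ holds on the level set $\{M_{k}=A_{k}\}$.

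The main obstacle, and the most delicate ingredient, is to impose on $\mathcal{Z}$ a strictly positive lower bound $M_{\gamma}(\mu)\ge m_{*}>0$ that is also $\mathcal{F}_{t}$-invariant. Via the scaling \eqref{eq:change} this corresponds precisely to the sharp Haff-type lower bound $m_{\gamma}(t)\gtrsim t^{-1}$ derived in the answer to \textbf{Question 1} (Propositions \ref{l2}--\ref{p2}): after passing to self-similar variables, such a decay becomes a uniform positive bound on $M_{\gamma}(g(s))$. Coupled with a matching uniform upper bound $M_{\gamma}(\mu)\le M$ from Theorem \ref{theo:decmom}, this leads to
\begin{equation*}
\mathcal{Z}:=\Big\{\mu\in\P^{0}_{\gmm}(\R)\;:\;M_{k}(\mu)\le A_{k},\quad m_{*}\le M_{\gamma}(\mu)\le M\Big\},
\end{equation*}
which is nonempty thanks to the a priori estimates, convex, and weak-$\star$ closed; the key point is that the higher moment bound $M_{k}\le A_{k}$ with $k>\gamma$ prevents escape of mass to infinity in $\gamma$-moment along weak-$\star$ limits, so that $m_{*}\le M_{\gamma}(\mu)$ is preserved in the limit.

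Once $\mathcal{Z}$ is shown to be $\mathcal{F}_{t}$-invariant and weak-$\star$ continuity of $(t,\mu)\mapsto\mathcal{F}_{t}\mu$ on $\mathcal{Z}$ is verified (relying on the Log-Lipschitz Kantorovich--Rubinstein stability estimates developed for the uniqueness part of Theorem \ref{theo:cauchy}, together with the moment bounds built into $\mathcal{Z}$), Theorem \ref{GPV} delivers $\mub\in\mathcal{Z}$ with $\mathcal{F}_{t}\mub=\mub$ for every $t\ge 0$, i.e.\ a weak measure solution to \eqref{steady}. By construction $\mathbf{m}_{\gamma}=M_{\gamma}(\mub)\ge m_{*}>0$, so Theorem \ref{theo:meas-L1} applies and furnishes a nonnegative $G\in L^{1}_{\gmm}(\R)$ with $\mub(\d\xi)=G(\xi)\,\d\xi$, completing the proof.
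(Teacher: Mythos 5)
Your overall strategy — Tykhonov fixed point on a compact, convex, invariant set of measures, followed by the $L^1$-upgrade via Theorem \ref{theo:meas-L1} — matches the paper's, but there are three genuine gaps in the details.

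First, your set $\mathcal{Z}$ uses only one higher moment bound $M_{k}(\mu)\le A_{k}$ for some $k>\gmm$. This is enough for weak-$\star$ compactness, but it is \emph{not} enough for the semi-group to be defined on $\mathcal{Z}$: the uniqueness theory (Proposition \ref{prop:stab}) requires the exponential tail condition \eqref{eq:exp}, and a single high moment bound does not give that. The paper imposes $M_{p}(\mu)\le\mathbf{M}_{p}$ for \emph{all} $p\ge 2$ (resp. $p\ge\gmm$), so that Proposition \ref{prop:expo} furnishes a \emph{uniform} exponential bound $\int\exp(\alpha|\xi|^{\gamma})\mu(\d\xi)\le C$ over $\mathcal{Z}$, which is exactly what makes the semiflow single-valued and continuous there. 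Without the full family of moment bounds, $\mathcal{Z}\not\subset\P_{\exp,\gamma}(\R)$ and the proof breaks before it starts.

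Second, the lower bound $M_{\gamma}(\mu)\ge m_{*}$ is not what is obtained (and propagated) when $\gamma>1$. The sharp Haff lower bound in that regime (Propositions \ref{l2} and \ref{p2}) controls $M_{s}$ for $s\in(0,1]$, which after scaling becomes a uniform positive lower bound on $M_{1}(\nub_{s})$, not $M_{\gamma}$. Moreover the constant $A_{1}$ in \eqref{cte} depends on the initial datum, so verifying invariance is nontrivial: the paper has to choose the threshold $\ell$ carefully so that the lower bound closes on itself. Your single-inequality description glosses over this case distinction and the consistency argument for $\ell$.

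Third, you propose building the semi-group by solving \eqref{gs} directly, which would require redoing the Cauchy theory with the drift term $c\,\partial_{\xi}(\xi g)$ in the measure setting. The paper sidesteps this entirely by defining $\mathcal{F}_{s}(\mu_{0})=\V(s)\#\mathcal{S}_{t(s)}(\mu_{0})$, i.e. the push-forward of the already-constructed flow for \eqref{eq:1} under the scaling. The invariance and continuity then follow from Theorem \ref{theo:decmoms} and Proposition \ref{prop:cont} applied to $\mathcal{S}_{t}$, with no new PDE analysis. Your route is not wrong in principle, but it leaves a real gap (well-posedness of \eqref{gs}) that the paper's construction avoids, and you should be aware that the paper's argument is genuinely lighter for this reason.
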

\section{Cauchy theory}\label{sec:cauchy}
We are first concerned with the Cauchy theory for problem \eqref{eq:1} and we begin with studying \emph{a priori} estimates for weak measure solutions to \eqref{eq:1}. Let us fix $\gamma > 0$ and set $\gmm=\max(\gamma,2).$

\subsection{\textit{A priori} estimates on moments}
We first state the following general properties of weak measure solutions to \eqref{eq:1} which have sufficiently enough bounded moments. 
\begin{prop}\label{prop:a priori}
Let $\mu_{0} \in \M_{\gmm}^{+}(\R)$ and let $(\mu_{t})_{t\geq 0}$ be any weak measure solution to \eqref{eq:1} associated to $\mu_{0}$.  Given $k \geq 0$, assume there exists $p > k+\gamma $ such that 
\begin{equation}\label{eq:hypmo}
\sup_{\delta < t < T}\|\mu_{t}\|_{p} < \infty\,, \qquad \forall\;T > \delta >0\,.
\end{equation}
Then, the following hold:
\begin{enumerate}
\item For  any $\varphi \in L_{-k}^{\infty}(\R) \cap \C(\R)$, the mapping $t \geq 0 \mapsto \langle \Q(\mu_{t},\mu_{t})\,;\,\varphi\rangle$ is continuous in $(0,\infty)$. 
\item For  any  $\varphi \in L_{-k}^{\infty}(\R) \cap \C(\R)$ it holds that
\begin{equation}\label{eq:deriv}
\dfrac{\d}{\d t} \int_{\R}\varphi(x)\mu_{t}(\d x)=\langle \Q(\mu_{t},\mu_{t})\,;\,\varphi\rangle \qquad \forall\, t \geq 0\,.
\end{equation}
\end{enumerate}
\end{prop}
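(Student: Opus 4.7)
The plan is to extend the integral identity \eqref{eq:sol}---which Definition \ref{defi:weak} supplies only for $\varphi \in \C_b(\R)$---to the class $L^{\infty}_{-k}(\R) \cap \C(\R)$ by a cut-off argument, and then read off both assertions from the uniform tail control that makes the limit pass. The workhorse is the following pointwise bound: using $|\varphi(x)| \le \|\varphi\|_{L^{\infty}_{-k}}\langle x\rangle^k$ together with the elementary inequality $\langle \a x + \b y\rangle^k \le C_k(\langle x\rangle^k + \langle y\rangle^k)$ (valid since $\a,\b \in (0,1)$), one finds
\begin{equation*}
|\Delta \varphi(x,y)| \le C_k\,\|\varphi\|_{L^{\infty}_{-k}}\bigl(\langle x\rangle^k + \langle y\rangle^k\bigr),
\end{equation*}
and coupling this with $|x-y|^\gamma \le 2^{\gamma}(\langle x\rangle^\gamma + \langle y\rangle^\gamma)$ plus the monotonicity of $\|\cdot\|_s$ in $s$ for nonnegative measures yields
\begin{equation*}
|\la \Q(\mu_t,\mu_t);\varphi\ra| \le C\,\|\varphi\|_{L^{\infty}_{-k}}\,\|\mu_t\|_{k+\gamma}^{2} \le C\,\|\varphi\|_{L^{\infty}_{-k}}\,\|\mu_t\|_{p}^{2}.
\end{equation*}

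Choose a sequence $\chi_N \in \C_c(\R)$ with $0 \le \chi_N \le 1$, $\chi_N \equiv 1$ on $[-N,N]$, vanishing outside $[-2N,2N]$, and set $\varphi_N := \chi_N \varphi \in \C_b(\R)$. Because $\varphi - \varphi_N$ is supported in $\{|x| \ge N\}$, the Chebyshev-type tail estimate
\begin{equation*}
\int_{|x|\ge N}\langle x\rangle^{k}\,\mu_t(\d x) \le \langle N\rangle^{-(p-k)}\,\|\mu_t\|_p
\end{equation*}
combined with the collision bound above (applied in each of the two variables) gives
\begin{equation*}
|\la \Q(\mu_t,\mu_t);\varphi - \varphi_N\ra| \le C\,\|\varphi\|_{L^{\infty}_{-k}}\,\langle N\rangle^{-(p-k-\gamma)}\,\|\mu_t\|_{p}^{2}.
\end{equation*}
Since $p > k+\gamma$, this convergence is \emph{uniform} in $t$ on every compact $[\delta,T]\subset(0,\infty)$. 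As $t\mapsto \la \Q(\mu_t,\mu_t);\varphi_N\ra$ is continuous on $[0,\infty)$ by Definition \ref{defi:weak}(2)(i), a uniform limit of continuous functions is continuous, yielding assertion (1).

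For (2), applying \eqref{eq:sol} to $\varphi_N$ yields, for every $t \ge 0$,
\begin{equation*}
\int_\R \varphi_N(x)\,\mu_t(\d x) = \int_\R \varphi_N(x)\,\mu_0(\d x) + \int_0^t \la \Q(\mu_\tau,\mu_\tau);\varphi_N\ra\,\d\tau.
\end{equation*}
Pointwise convergence of the left-hand side to $\int_\R \varphi\,\mu_t$ follows from the same tail estimate (together with $\mu_0 \in \M_{\gmm}^{+}$ for the initial term), while the time integral is handled by dominated convergence: the bound of the previous paragraph provides an integrable majorant $C\|\varphi\|_{L^{\infty}_{-k}}\sup_{\delta \le \tau \le T}\|\mu_\tau\|_p^{2}$ on each $[\delta,T]\subset(0,\infty)$, and the contribution of $[0,\delta]$ is made negligible by letting $\delta \downarrow 0$. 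One therefore passes to the limit $N \to \infty$ and obtains
\begin{equation*}
\int_\R \varphi(x)\,\mu_t(\d x) = \int_\R \varphi(x)\,\mu_0(\d x) + \int_0^t \la \Q(\mu_\tau,\mu_\tau);\varphi\ra\,\d\tau,
\end{equation*}
which, in view of the continuity established in (1), may be differentiated in $t$ to give \eqref{eq:deriv}.

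The principal technical obstacle is making the truncation error uniform in time on compact sub-intervals of $(0,\infty)$: this is precisely where the strict inequality $p > k+\gamma$ is essential, as it supplies the algebraic gain $\langle N\rangle^{-(p-k-\gamma)}$ that kills the tails; without it only pointwise convergence would follow. A secondary subtlety concerns behaviour near $t=0$, where only $\|\mu_0\|_{\gmm}$ is available; this is handled by splitting the time integral at $\delta > 0$, applying the uniform argument on $[\delta,T]$ via \eqref{eq:hypmo}, and sending $\delta \downarrow 0$ using the finite initial moment.
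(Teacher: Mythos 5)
Your proof of assertion (1) takes a genuinely different and somewhat cleaner route than the paper's.  The paper passes to the limit inside \eqref{eq:sol} to obtain \eqref{eq:st} for $t_{2}>t_{1}>0$, deduces from the resulting time-Lipschitz estimate that $\mu_{t_{n}}\to\mu_{t}$ weakly-$\star$, and then invokes Lemma \ref{lem_lu} to obtain continuity of $t\mapsto\la \Q(\mu_{t},\mu_{t});\varphi\ra$.  You instead observe that each $\varphi_{N}\in\C_{b}(\R)$ already enjoys the continuity of $t\mapsto\la\Q(\mu_{t},\mu_{t});\varphi_{N}\ra$ by Definition \ref{defi:weak}(2)(i), and that the tail bound $\la N\ra^{-(p-k-\gamma)}$ makes $\la\Q(\mu_{t},\mu_{t});\varphi_{N}\ra\to\la\Q(\mu_{t},\mu_{t});\varphi\ra$ \emph{uniform} on compacts of $(0,\infty)$; a uniform limit of continuous functions is continuous.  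This bypasses Lemma \ref{lem_lu} entirely and is a perfectly valid, arguably more elementary, argument.  (Do check that the tail estimate genuinely controls all four terms of $\Delta(\varphi-\varphi_{N})$: the pieces $(\varphi-\varphi_{N})(\a x+\b y)$ and $(\varphi-\varphi_{N})(\b x+\a y)$ are supported on $\{\max(|x|,|y|)\geq N\}$, not just $\{|x|\geq N\}$, which is fine but worth saying.)

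For assertion (2), however, the treatment of the boundary $t=0$ has a gap.  You write the identity \eqref{eq:sol} for $\varphi_{N}$ over $[0,t]$ and wish to let $N\to\infty$.  The $N$-independent dominating function you invoke, $C\,\|\varphi\|_{L^{\infty}_{-k}}\,\|\mu_{\tau}\|_{k+\gamma}^{2}$, is integrable on $[\delta,T]$ thanks to \eqref{eq:hypmo}, but on $[0,\delta]$ the only control available from Definition \ref{defi:weak} is $\|\mu_{\tau}\|_{\gmm}\leq\|\mu_{0}\|_{\gmm}$, which bounds $|\la\Q(\mu_{\tau},\mu_{\tau});\varphi_{N}\ra|$ only through $\|\varphi_{N}\|_{\infty}$, a quantity that grows like $N^{k}$.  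Thus the assertion that ``the contribution of $[0,\delta]$ is made negligible by letting $\delta\downarrow 0$'' is not justified as stated: the two limits $N\to\infty$ and $\delta\to 0$ do not commute freely, and one cannot even assert \emph{a priori} that $\tau\mapsto\la\Q(\mu_{\tau},\mu_{\tau});\varphi\ra$ is integrable near $\tau=0$ when $k+\gamma>\gmm$.  The paper sidesteps this entirely by proving the integral identity \eqref{eq:st} only for $t_{2}>t_{1}>0$, where \eqref{eq:hypmo} is in force, and reading \eqref{eq:deriv} off from that together with the continuity of part (1).  You should adopt the same restriction: write \eqref{eq:sol} between $t_{1}$ and $t_{2}$ with $0<t_{1}<t_{2}$, pass $N\to\infty$ there (your uniform estimate on $[t_{1},t_{2}]$ makes this immediate), and differentiate at $t>0$.  (It is worth noting that the paper's own statement \eqref{eq:deriv} claims ``$\forall\,t\geq 0$'', which at $t=0$ and for $k+\gamma>\gmm$ tacitly presupposes extra integrability of $\mu_{0}$; so the wrinkle at the origin is not unique to your write-up, but the paper's proof is careful to work away from it.)
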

The proof of Proposition \ref{prop:a priori} will need the following  preliminary lemma (see \cite[Proposition 2.2]{Lu} for a complete proof).
\begin{lem}\label{lem_lu}
Let $(\mu^n)_{n\in\N}$ be a sequence from $ \M_{\gmm}^{+}(\R)$  that converges weakly-$\star$ to some $\mu \in \M_{\gmm}^{+}(\R)$. We assume that  for some $p>0$  
it holds
\begin{equation*}
\sup_{n\in\N}\|\mu^n\|_{p} < \infty\,.
\end{equation*}
Then, for any $\psi\in\C(\R^2)$ satisfying $\displaystyle \lim_{|x|+|y|\to +\infty} \frac{\psi(x,y)}{\la x \ra^{p}+\la y \ra^{p}} =0$, one has
\begin{equation*} 
\lim_{n\to +\infty} \int_{\R^2}\psi(x,y)\,\mu^n(\d x)\,\mu^n(\d y) 
=\int_{\R^2} \psi(x,y)\, \mu(\d x)\, \mu(\d y).
\end{equation*}
\end{lem}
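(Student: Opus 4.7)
The plan is a standard truncate-and-tail decomposition that turns weak-$\star$ convergence of the measures into convergence of integrals of a continuous function of polynomial growth. Given $\e > 0$, I would first use the growth hypothesis on $\psi$ to pick $R = R(\e)$ so that $|\psi(x,y)| \leq \e(\la x \ra^p + \la y \ra^p)$ whenever $|x|+|y|\geq R$, and introduce a cutoff $\chi_R \in \C_c(\R)$ equal to $1$ on $[-R,R]$, supported in $[-2R,2R]$, with $0\leq\chi_R\leq 1$. Setting $\psi_R(x,y) := \psi(x,y)\chi_R(x)\chi_R(y) \in \C_c(\R^2)$, the decomposition to work with is $\psi = \psi_R + (\psi-\psi_R)$.

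The first step handles the \emph{compact part}, namely
\begin{equation*}
\lim_{n\to\infty}\int_{\R^2}\psi_R(x,y)\,\mu^n(\d x)\mu^n(\d y) = \int_{\R^2}\psi_R(x,y)\,\mu(\d x)\mu(\d y).
\end{equation*}
Because $\psi_R$ is continuous with compact support, I would uniformly approximate it on a compact rectangle containing its support by finite tensor sums $\sum_{j=1}^{N} a_j(x)b_j(y)$ with $a_j, b_j \in \C_c(\R)$ (via Stone-Weierstrass). For each such product one has $\int a_j(x) b_j(y)\,\mu^n(\d x)\mu^n(\d y) = \bigl(\int a_j\,\d\mu^n\bigr)\bigl(\int b_j\,\d\mu^n\bigr)$, so weak-$\star$ convergence of $\mu^n$ combined with the uniform mass bound $\sup_n\|\mu^n\|_0 \leq \sup_n\|\mu^n\|_p < \infty$ delivers the claim by a routine triangle-inequality argument.

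The second step is a uniform \emph{tail estimate}. Since $1-\chi_R(x)\chi_R(y)$ is supported in $\{|x|\geq R\}\cup\{|y|\geq R\}\subset\{|x|+|y|\geq R\}$, the choice of $R$ yields, for any $\nu\in\M_{\gmm}^+(\R)$,
\begin{align*}
\left|\int_{\R^2}\psi(x,y)\,(1-\chi_R(x)\chi_R(y))\,\nu(\d x)\nu(\d y)\right|
&\leq \e\int_{\R^2}\big(\la x\ra^p + \la y\ra^p\big)\nu(\d x)\nu(\d y)\\
&= 2\e\,\|\nu\|_0\,\|\nu\|_p.
\end{align*}
Applying this with $\nu=\mu^n$ and with $\nu=\mu$, and using $C:=\sup_n\|\mu^n\|_p<\infty$ together with the lower semicontinuity $\|\mu\|_p \leq \liminf_n\|\mu^n\|_p$ (obtained by monotone approximation of $\la\cdot\ra^p$ by compactly supported truncates tested against $\mu^n \geq 0$), both tail contributions are bounded by $2\e C^2$.

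Combining both steps gives $\limsup_{n\to\infty}\bigl|\int\psi\,\mu^n\otimes\mu^n - \int\psi\,\mu\otimes\mu\bigr| \leq 4\e C^2$, and sending $\e\downarrow 0$ concludes. The main (mild) obstacle I anticipate is the transfer of weak-$\star$ convergence from $\mu^n$ on $\R$ to the product measures on $\R^2$ tested against continuous compactly supported functions; this is precisely what the tensor approximation in the first step takes care of, and the uniform control of the tail justifies throwing away the noncompact region before this transfer is applied.
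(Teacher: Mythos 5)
Your proof is correct. The paper itself gives no argument for this lemma and simply cites \cite[Proposition 2.2]{Lu}; the truncate-and-tail decomposition you carry out is exactly the natural (and essentially the only) route, and it matches what one finds in that reference. The two technical hinges are handled properly: (i) for the compact part, the Stone--Weierstrass tensor approximation of $\psi_R$ on a rectangle together with the uniform mass bound $\|\mu^n\|_0 \le \|\mu^n\|_p$ lets you pass from one-dimensional weak-$\star$ convergence to convergence of the bilinear integrals; (ii) for the tail, the growth hypothesis on $\psi$ together with the weak-$\star$ lower semicontinuity of $\|\cdot\|_p$ on nonnegative measures (proved exactly as you indicate, by monotone approximation of $\langle\cdot\rangle^p$ from below by $\C_c$ functions) yields a tail bound that is uniform in $n$ and also applies to the limit $\mu$. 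One small point worth making explicit if you were to write this up in full: when you approximate $\psi_R$ by a tensor sum with factors in $\C_c(\R)$, the approximation must be uniform on all of $\R^2$ (not just on the support of $\psi_R$), which is arranged by multiplying the Stone--Weierstrass approximant by a fixed $\C_c$ cutoff equal to $1$ on $\mathrm{supp}\,\psi_R$; this costs nothing since $\psi_R$ vanishes where the cutoff differs from $1$. With that detail spelled out, the argument is complete.
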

\begin{proof}[Proof of Proposition \ref{prop:a priori}] Let $k \geq 0$  and $\varphi \in L^{\infty}_{-k}(\R) \cap \C(\R)$ be given.   Choose $\chi \in \C^{\infty}(\R)$ such that $\|\chi\|_{\infty} \leq 1$ with $\chi(x)=0$ if $|x| \geq 2$ and $\chi(x)=1$ for $|x| \leq 1$ and set for any $n \in \mathbb{N}_*$, $\varphi_{n}(x)=\varphi(x)\chi\left(\frac{x}{n}\right)$.  It follows that $\varphi_{n} \in \C_{c}(\R) \subset \C_{b}(\R)$ for any $n$ with $\varphi_{n}(x) \to \varphi(x)$ for any $x \in \R$ as $n \to \infty$. Consequently, $\Delta \varphi_{n}(x,y) \to \Delta\varphi(x,y)$ for any $(x,y) \in \R^{2}$ as $n \to \infty$.  Now, since $\varphi_{n}\in \C_{b}(\R)$ for any $n \in \mathbb{N}$, one deduces from \eqref{eq:sol} that
\begin{equation*}
\int_{\R}\varphi_{n}(x)\mu_{t_2}(\d x)=\int_{\R}\varphi_{n}(x)\mu_{t_1}(\d x)+\int_{t_1}^{t_2}\la \Q(\mu_{\tau},\mu_{\tau})\,;\,\varphi_{n}\ra \,\d\tau \qquad \forall\; t_2 > t_1 > 0, \quad \forall\; n \geq 1\,.
\end{equation*} 
Notice that $\|\varphi_{n}\|_{L^{\infty}_{-k}} \leq \|\varphi\|_{L^{\infty}_{-k}} < \infty$. Thus, there is $C > 0$ such that for any $n\in\N_*$,  any $(x,y)\in\R^2$,
\begin{equation}\label{delta}
|\Delta \varphi(x,y)| \leq C\left(\la x \ra^{k}+ \la y \ra^{k}\right) \qquad 
\mbox{ and } \qquad
|\Delta \varphi_n(x,y)| \leq C\left(\la x \ra^{k}+ \la y \ra^{k}\right)\,,
\end{equation}
where the constant $C$ depends only on $\varphi$ (and $\a$).  Using the dominated convergence theorem together with \eqref{eq:hypmo}, one deduces that
\begin{align*}
\int_{t_1}^{t_2}\la \Q(\mu_{\tau},&\mu_{\tau})\,;\,\varphi_{n}\ra \,\d\tau=\frac{1}{2}\int_{t_1}^{t_2}\d\tau\int_{\R^{2}}|x-y|^{\gamma}\Delta\varphi_{n}(x,y)\mu_{\tau}(\d x)\mu_{\tau}( \d y)\\
&\underset{n \to \infty}{\longrightarrow} \frac{1}{2}\int_{t_1}^{t_2}\d\tau\int_{\R^{2}}|x-y|^{\gamma}\Delta\varphi(x,y)\mu_{\tau}(\d x)\mu_{\tau}( \d y)= \int_{t_1}^{t_2}\la \Q(\mu_{\tau},\mu_{\tau})\,;\,\varphi\ra \,\d\tau\,,  
\end{align*}
so, the identity
\begin{equation}\label{eq:st}
\int_{\R}\varphi(x)\mu_{t_2}(\d x)=\int_{\R}\varphi(x)\mu_{t_1}(\d x)+\int_{t_1}^{t_2}\la \Q(\mu_{\tau},\mu_{\tau})\,;\,\varphi\ra \,\d\tau \qquad \forall\; t_2 > t_1 > 0\,,
\end{equation}
holds true. It follows from \eqref{delta} that
\begin{equation*} 
\bigg|\la \Q(\mu_{\tau},\mu_{\tau})\,;\,\varphi\ra\bigg| \leq C \int_{\R^{2}}|x-y|^{\gamma}\left(\la x \ra^{k}+ \la y \ra^{k}\right)\mu_{\tau}(\d x)\mu_{\tau}(\d y) \leq 2C\|\mu_{\tau}\|_{k+\gamma}^{2}\qquad \forall \tau >0\,.
\end{equation*}
Combining this with \eqref{eq:st}, one sees that, for any $T > \delta > 0$, 
\begin{equation*}
\bigg|\int_{\R}\varphi(x)\mu_{t_2}(\d x)-\int_{\R}\varphi(x)\mu_{t_1}(\d x)\bigg| \leq 2\,C\,|t_2-t_1|\,\sup_{\delta \leq \tau \leq T}\|\mu_{\tau}\|_{k+\gamma}^{2} \qquad \forall\; t_{1},\,t_{2} \in [\delta,T]\,.
\end{equation*}
In particular, under assumption \eqref{eq:hypmo}, the mapping $t \mapsto \ds\int_{\R}\varphi(x)\mu_{t}(\d x)$ is continuous over $(0,\infty)$. This shows that, for any $t > 0$ and sequence $(t_{n})_{n} \subset [t/2,3t/2]$ with $\lim_{n}t_{n}=t$, the sequence $(\mu_{t_{n}})_{n}$ converges weakly-$\star$ towards $\mu_{t}$.  In addition, since $\varphi \in L^\infty_{-k}(\R)$ and $p > k+\gamma$ one concludes that
\begin{equation*}
\lim_{|x|+|y| \to \infty} \dfrac{|x-y|^{\gamma}\Delta \varphi(x,y)}{\la x \ra^{p}+\la y \ra^{p}}=0\,.
\end{equation*}
Thus, it readily follows from Lemma \ref{lem_lu} that 
\begin{equation}\label{eq:tnt}
\lim_{n} \int_{\R^{2}}|x-y|^{\gamma}\Delta \varphi(x,y)\mu_{t_{n}}(\d x)\mu_{t_{n}}(\d y)=\int_{\R^{2}}|x-y|^{\gamma}\,\Delta \varphi(x,y)\,\mu_{t}(\d x)\mu_{t}(\d y)\,.
\end{equation}
Henceforth, the mapping $t \mapsto \la \Q(\mu_{t},\mu_{t}),\varphi\ra$ is continuous over $(0,\infty)$ proving point \textit{(1)}.  Point \textit{(2)} follows then directly from \eqref{eq:st}.
\end{proof}
Moments estimates of the collision operator are given by the following: 
\begin{prop}
Let $\mu \in \P_{k+\gamma}^0(\R)$  with $k \geq 2$. Then
\begin{subequations}
\begin{equation}  \label{eq:Qmuk}
\langle \Q(\mu,\mu)\,;\,|\cdot|^{k}\rangle \leq -\frac{1}{2}\left(1-\a^{k}-\b^{k}\right) M_{k+\gamma}(\mu) \leq 0\,.
\end{equation}
In particular,
\begin{equation}\label{eq:Qmuk1}
\langle \Q(\mu,\mu)\,;\,|\cdot|^{k}\rangle  \leq -\frac{1}{2}\left(1-\a^{k}-\b^{k}\right)M_{k}\left(\mu\right)^{1+\frac{\gamma}{k}}\,,
\end{equation}
and, if $k >2$
\begin{equation} \label{eq:Qmuk2}
\langle \Q(\mu,\mu)\,;\,|\cdot|^{k}\rangle 
\leq -\frac{1}{2}\left(1-\a^{k}-\b^{k}\right)M_{2}(\mu)^{-\frac{\gamma}{k-2}}\,M_{k}(\mu)^{1+\frac{\gamma}{k-2}}.\end{equation}
\end{subequations}
\end{prop}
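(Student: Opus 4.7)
Plan. The strategy is to substitute $\varphi(x)=|x|^{k}$ into the weak form \eqref{eq:Qmu}, combine a Povzner-type pointwise bound on $\Delta\varphi(x,y)=|\a x+\b y|^{k}+|\b x+\a y|^{k}-|x|^{k}-|y|^{k}$ with the centering built into $\mu\in\P_{k+\gamma}^{0}(\R)$, and then derive the two consequences by Hölder interpolation of moments.

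First, \eqref{eq:Qmu} gives $\la\Q(\mu,\mu);|\cdot|^{k}\ra=\frac{1}{2}\int_{\R^{2}}|x-y|^{\gamma}\Delta\varphi(x,y)\,\mu(\d x)\mu(\d y)$. Convexity of $z\mapsto|z|^{k}$ for $k\ge 1$ applied via Jensen with weights $(\a,\b)$ already yields the trivial sign $\Delta\varphi\le 0$, but to extract the sharp factor $(1-\a^{k}-\b^{k})$ one needs a quantitative Povzner-type refinement. For $k=2$ this reduces to the exact identity $\Delta\varphi(x,y)=-(1-\a^{2}-\b^{2})|x-y|^{2}$; for general $k\ge 2$, a quantitative form of the Jensen defect of $|\cdot|^{k}$ provides a pointwise estimate $\Delta\varphi(x,y)\le -(1-\a^{k}-\b^{k})H(x,y)$ for some nonnegative $H$ whose weighted integral against $|x-y|^{\gamma}\mu(\d x)\mu(\d y)$ dominates $M_{k+\gamma}(\mu)$.

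Next, I would exploit the centering condition to close the moment bound: since $z\mapsto|x-z|^{k+\gamma}$ is convex (as $k+\gamma>1$) and $\int y\,\mu(\d y)=0$, Jensen gives $\int_{\R}|x-y|^{k+\gamma}\mu(\d y)\ge |x|^{k+\gamma}$, and a second integration against $\mu$ produces $M_{k+\gamma}(\mu)$ on the right-hand side, furnishing \eqref{eq:Qmuk}. The two interpolation consequences then follow from Hölder's inequality for moments of the probability $\mu$: the bound $M_{k}(\mu)\le M_{k+\gamma}(\mu)^{k/(k+\gamma)}$ rearranges to $M_{k+\gamma}(\mu)\ge M_{k}(\mu)^{1+\gamma/k}$ and yields \eqref{eq:Qmuk1}; writing $k=\theta\cdot 2+(1-\theta)(k+\gamma)$ with $\theta=\gamma/(k+\gamma-2)$, Hölder gives $M_{k}(\mu)\le M_{2}(\mu)^{\theta}M_{k+\gamma}(\mu)^{1-\theta}$, hence $M_{k+\gamma}(\mu)\ge M_{2}(\mu)^{-\gamma/(k-2)}M_{k}(\mu)^{1+\gamma/(k-2)}$, which yields \eqref{eq:Qmuk2}.

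The main obstacle is the Povzner-type pointwise estimate with the sharp constant $(1-\a^{k}-\b^{k})$. In one dimension the collision rule is deterministic and there is no angular variable to average over, so the usual higher-dimensional Povzner arguments must be adapted; the range $k>2$ is especially delicate since $\Delta\varphi$ no longer admits an exact identity and one has to rely instead on a quantitative form of strict convexity of $|\cdot|^{k}$.
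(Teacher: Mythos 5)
Your strategy coincides with the paper's: substitute $\varphi(x)=|x|^{k}$ into the weak form \eqref{eq:Qmu}, bound $\Delta\varphi$ pointwise by a sharp Povzner-type inequality with the constant $1-\a^{k}-\b^{k}$, then use the centering $\int y\,\mu(\d y)=0$ together with Jensen's inequality to dominate $M_{k+\gamma}(\mu)$, and close the two corollaries by moment interpolation. Both interpolation steps are carried out correctly and are identical to the paper's.

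The gap is the pointwise estimate itself. You write $\Delta\varphi(x,y)\le -(1-\a^{k}-\b^{k})H(x,y)$ for \emph{some} nonnegative $H$, call it ``the main obstacle,'' and even express doubt that such an estimate is available once $k>2$. But your own subsequent Jensen step, $\int_{\R}|x-y|^{k+\gamma}\mu(\d y)\ge|x|^{k+\gamma}$, only makes sense if $H(x,y)=|x-y|^{k}$ exactly, so that $|x-y|^{\gamma}H(x,y)=|x-y|^{k+\gamma}$. The precise inequality you need is the paper's \eqref{elem}: for every $(x,y)\in\R^{2}$, $k\geq 2$, $\a\in(0,1)$, $\b=1-\a$,
\begin{equation*}
|x|^{k}+|y|^{k}-\left|\a x+\b y\right|^{k}-\left|\a y+\b x\right|^{k}\;\geq\;\left(1-\a^{k}-\b^{k}\right)|x-y|^{k}\,,
\end{equation*}
with equality when $k=2$. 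The paper states this as an elementary inequality and proceeds exactly as you outline. As written, your argument stops short of the proposition because the key pointwise bound is neither stated with the correct $H$ nor established; once you pin down $H(x,y)=|x-y|^{k}$ and verify the inequality above, the rest of your proof is complete and identical to the paper's.
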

\begin{proof} 
We apply the weak form \eqref{eq:Qmu} to $\varphi(x)=|x|^{k}$.  Using the elementary inequality
\begin{equation}\label{elem}
|x|^{k}+|y|^{k}-\left|\a x+\b y\right|^{k} - \left|\a y +\b x\right|^{k}  \geq \left(1-\a^{k}-\b^{k}\right)\,|x-y|^{k}
\end{equation}
valid for any $(x,y) \in \R^{2}$ and $k \geq 2$ (with equality sign whenever $k=2$), and noticing that $1-\a^{k}-\b^{k}$ is nonnegative for any $k \geq 2$ and any $\a \in (0,1)$ we have 
\begin{equation}\label{eq:Qk}
\langle \Q(\mu,\mu)\,;\,|\cdot|^{k}\rangle \leq -\frac{1}{2}\left(1-\a^{k}-\b^{k}\right)\int_{\R^{2}}|x-y|^{\gamma+k}\mu(\d x)\mu(\d  y)\,.
\end{equation}
Since $\mu \in \P_{k+\gamma}^{0}(\R)$ and the mapping $[0,\infty)\ni r \mapsto r^{\gamma+k}$ is convex, one deduces from Jensen's inequality that
\begin{equation*}
\int_{\R^{2}}|x-y|^{\gamma+k}\mu(\d x)\mu(\d y) \geq \int_{\R}|x|^{k+\gamma}\mu(\d x)\,,
\end{equation*}
from which inequality \eqref{eq:Qk} yields  \eqref{eq:Qmuk}.  
Using again Jensen's inequality we get also that $M_{k+\gamma}(\mu) \geq M_{k}(\mu)^{1+\frac{\gamma}{k}}$ which proves \eqref{eq:Qmuk1}. Finally, according to H\"{o}lder inequality
\begin{equation*}
M_{k+\gamma}(\mu) \geq M_{k}(\mu)^{1+\frac{\gamma}{k-2}}\,M_{2}(\mu)^{-\frac{\gamma}{k-2}} \qquad \forall\; k >2\,,
\end{equation*}
and \eqref{eq:Qmuk2} is deduced from \eqref{eq:Qmuk}.
\end{proof}
One deduces from the above estimate suitable \emph{upper bounds} for the moments of solution to \eqref{eq:1}:
\begin{prop}\label{prop:decrease}
Let $\mu_{0} \in \P_{\gmm}^{0}(\R)$ be a given initial datum and $(\mu_{t})_{t \geq 0}$ be a measure weak solution to \eqref{eq:1} associated to $\mu_{0}$. Then, the following holds
\begin{enumerate}
\item If  $(\mu_{t})_{t\geq 0}\subset \P_{p}^{0}(\R)$ with $p>k+\gamma$ for some $k \geq 2$, then, $t \geq 0 \mapsto M_{k}(\mu_{t})$ is decreasing with 
\begin{equation}\label{eq:decay}
 M_{k}\left(\mu_{t}\right)   \leq M_{k}\left(\mu_{0}\right)\Big(1+\frac{\gamma}{2k}\big(1-\a^{k}-\b^{k}\big)M_{k}(\mu_{0})^{\frac{\gamma}{k}}\,t \Big)^{-\frac{k}{\gamma}} \qquad \forall \; t \geq 0\,.
 \end{equation} 
In particular, $ \sup_{t \geq 0}M_{k}(\mu_{t}) < \infty$.
\item If for some $k>2$ and $p>k+\gamma$,  $\sup_{t \geq \delta}\|\mu_{t}\|_{p} < \infty$ for any $\delta > 0$, then
\begin{equation}\label{eq:decay2}
M_{k}(\mu_{t}) \leq  C_{k}(\gamma)\,\|\mu_{0}\|_{\gmm}\,\min\big\{ t^{-\frac{k-2}{\gamma}},t^{-\frac{k}{\gamma}}\big\} \qquad \forall\; t > 0\,,
\end{equation}
where $C_{k}(\gamma) > 0$ is a positive constant depending only on $k > 2$, $\gamma$ and $\a$.
\end{enumerate}
\end{prop}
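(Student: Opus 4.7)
My plan is to reduce both parts of the statement to scalar Bernoulli-type differential inequalities for $M_k(\mu_t)$, obtained by differentiating $t \mapsto M_k(\mu_t)$ via Proposition \ref{prop:a priori} and then invoking the moment bounds \eqref{eq:Qmuk1} and \eqref{eq:Qmuk2}.

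For part (1), I would apply Proposition \ref{prop:a priori} to $\varphi(x) = |x|^k \in L^\infty_{-k}(\R) \cap \C(\R)$, which requires the uniform bound $\sup_{\delta < t < T}\|\mu_t\|_p < \infty$ from the hypothesis $p > k+\gamma$. Since $(\mu_t) \subset \P_p^0(\R)$ only gives pointwise finiteness $M_p(\mu_t) < \infty$, I would first derive a uniform-in-$t$ bound by applying the weak formulation \eqref{eq:sol} to truncated test functions $\varphi_n(x) = |x|^p \chi(x/n)$: using \eqref{eq:Qmuk} at level $p$ together with a dominated-convergence argument shows that $t \mapsto M_p(\mu_t)$ is non-increasing, so that $\sup_t M_p(\mu_t) \leq M_p(\mu_0)$. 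Once differentiability of $t \mapsto M_k(\mu_t)$ is justified, estimate \eqref{eq:Qmuk1} yields the closed scalar inequality
\[
\dfrac{\d}{\d t} M_k(\mu_t) \leq - \tfrac{1}{2}(1-\a^k-\b^k)\, M_k(\mu_t)^{1+\gamma/k},
\]
which integrates explicitly into \eqref{eq:decay}. The monotonicity of $M_k(\mu_t)$ is immediate from this inequality (or directly from \eqref{eq:Qmuk} before Jensen is invoked).

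For part (2), the essential new feature is that $M_k(\mu_0)$ is not assumed finite, so this is really a \emph{creation of moments} statement for $t > 0$. Here the hypothesis $\sup_{t \geq \delta}\|\mu_t\|_p < \infty$ is precisely \eqref{eq:hypmo} and Proposition \ref{prop:a priori} applies without preliminary work. Combining \eqref{eq:Qmuk2} with the energy dissipation identity \eqref{kin}, which gives $M_2(\mu_t) \leq M_2(\mu_0) \leq \|\mu_0\|_\gmm$, produces
\[
\dfrac{\d}{\d t} M_k(\mu_t) \leq - c\, \|\mu_0\|_\gmm^{-\gamma/(k-2)}\, M_k(\mu_t)^{1+\gamma/(k-2)}.
\]
The crucial observation is that any nonnegative function satisfying a Bernoulli inequality $y' \leq -C y^{1+\alpha}$ with $\alpha > 0$ obeys $y(t)^{-\alpha} \geq y(t_0)^{-\alpha} + C\alpha(t-t_0)$; discarding the first term on the right and letting $t_0 \to 0^+$ produces the bound $y(t) \leq (C\alpha t)^{-1/\alpha}$ \emph{independent of the initial value}, hence $M_k(\mu_t) \leq C t^{-(k-2)/\gamma}$. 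The complementary bound $M_k(\mu_t) \leq C t^{-k/\gamma}$ comes from the exact same argument applied to \eqref{eq:Qmuk1}, and \eqref{eq:decay2} is obtained by taking the minimum.

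The main technical obstacle is the justification in part (1) of the uniform-in-$t$ moment bound required by Proposition \ref{prop:a priori}, since the pointwise hypothesis $\mu_t \in \P_p^0(\R)$ does not immediately deliver it. A cleaner route—consistent with the overall construction strategy indicated after Theorem \ref{theo:cauchy}—is to prove the bounds first at the $L^1$ level, where the integration of $|x|^p$ against $\Q(f,f)$ is unambiguous, and then to transfer them to measure solutions by approximation, leveraging the weak-$\star$ continuity provided by Lemma \ref{lem_lu}.
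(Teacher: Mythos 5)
Your reduction of both parts to Bernoulli-type differential inequalities for $M_k(\mu_t)$, via Proposition \ref{prop:a priori} and the moment estimates \eqref{eq:Qmuk1}--\eqref{eq:Qmuk2}, is precisely the paper's route, and your observation that for part (2) one integrates the inequality and then sends the initial time to $0^+$ so that the constant becomes independent of the (possibly infinite) initial moment is exactly the paper's argument. Your direct integration of \eqref{eq:Qmuk1} to get the long-time $t^{-k/\gamma}$ decay is an equally valid, slightly more self-contained, variant of the paper's device of applying \eqref{eq:decay} with $\mu_\delta$ as new initial datum. You also correctly flag a subtlety the paper's proof glosses over: the hypothesis $(\mu_t)_{t\geq 0}\subset\P_p^0(\R)$ in part (1) gives only pointwise finiteness of $M_p(\mu_t)$, not the uniform-in-$t$ bound \eqref{eq:hypmo} needed to invoke Proposition \ref{prop:a priori} for $\varphi=|x|^k$.

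However, your proposed repair for that gap does not actually close it. Applying \eqref{eq:sol} to $\varphi_n(x)=|x|^p\chi(x/n)$ and passing to the limit by dominated convergence reproduces the same circularity: the integrand $|x-y|^\gamma\Delta\varphi_n(x,y)$ grows like $\la x\ra^{p+\gamma}+\la y\ra^{p+\gamma}$, so an integrable dominant would require $M_{p+\gamma}(\mu_\tau)<\infty$, which the hypothesis does not supply. One cannot fall back on a sign argument either: for the truncated $\varphi_n$ the quantity $\Delta\varphi_n$ is no longer $\leq 0$ pointwise (take $|x|>n\geq|y|$), and a reverse Fatou estimate for $\limsup_n\la\Q(\mu_\tau,\mu_\tau);\varphi_n\ra$ would again call for an $n$-uniform integrable upper bound on $|x-y|^\gamma[\Delta\varphi_n]_+$, which leads back to the missing $(p+\gamma)$-moment. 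The route you mention at the end --- establish the decay for the approximating $L^1$ solutions of Theorem \ref{theo:L1}, where the manipulation of $\int|x|^p\Q(f,f)\d x$ is unambiguous, and transfer it to the measure solution through Lemma \ref{lem_lu} and the construction in Proposition \ref{prop:existence} --- is the sound way to fill the gap, and it is what the paper implicitly relies on, since the solutions it constructs do satisfy $\sup_{t\geq t_0}\|\mu_t\|_s<\infty$ for all $t_0>0$ and $s>\gmm$.
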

\begin{proof}
For the proof of \eqref{eq:decay}, apply \eqref{eq:deriv} with $\varphi(x)=|x|^{k}$ and note that using \eqref{eq:Qmuk1}
\begin{equation*}
\dfrac{\d}{\d t}M_{k}(\mu_{t}) \leq -\frac{1}{2}\left(1-\a^{k}-\b^{k}\right)\,M_{k}\left(\mu_{t}\right)^{1+\frac{\gamma}{k}} \qquad \forall t \geq 0\,.
\end{equation*}
This directly implies the point (1) of Proposition \ref{prop:decrease}. To prove estimate \eqref{eq:decay2} for short time observe that applying \eqref{eq:deriv} to $\varphi(x)=|x|^{k}$ and using \eqref{eq:Qmuk2}, one gets
\begin{equation*}
\dfrac{\d}{\d t}M_{k}(\mu_{t}) \leq -\frac{1}{2}\left(1-\a^{k}-\b^{k}\right)M_{2}(\mu_{t})^{-\frac{\gamma}{k-2}}\,M_{k}(\mu_{t})^{1+\frac{\gamma}{k-2}} \qquad \forall\; t > 0\,,
\end{equation*}
and, since by definition of measure weak solution it holds $M_{2}(\mu_{t}) \leq \|\mu_{0}\|_{\gmm}$ for any $t \geq 0$, one deduces that
\begin{equation*}
\dfrac{\d}{\d t}M_{k}(\mu_{t}) \leq -\frac{1}{2}\left(1-\a^{k}-\b^{k}\right)\|\mu_{0}\|_{\gmm}^{-\frac{\gamma}{k-2}}\,M_{k}(\mu_{t})^{1+\frac{\gamma}{k-2}} \qquad \forall\; t > 0\,.
\end{equation*}
This inequality leads to estimate \eqref{eq:decay2} with constant $C_{k}(\gamma)=\left(\frac{\gamma}{2(k-2)}\left(1-\a^{k}-\b^{k}\right)\right)^{-\frac{k-2}{\gamma}}$.  The long time decay follows applying \eqref{eq:decay} for $t\geq\delta$.
\end{proof}
\begin{nb}\label{nb:gene-rho}
It is not difficult to prove that the conclusions of the previous two propositions hold for general measure $\mu_{0} \in \M^{+}_{\gmm}(\R)$ with
$$\|\mu_{0}\|_{0}=\varrho\neq 0\;\; \text{and}\;\; \ds\int_{\R}x \mu_{0}(\d x)=0\,.$$
In such a case, the constant $C_{k}(\gamma)$ depends also continuously on $\varrho$.
\end{nb}
\noindent
Introduce the class $\P_{\exp,\gamma}(\R)$ of probability measures with exponential tails of order $\gamma$,
\begin{equation}\label{eq:pexp}
\P_{\exp,\gamma}(\R)=\left\{\mu \in \P(\R)\;\,;\,\exists \varepsilon > 0 \text{ such that } \int_{\R}\exp(\varepsilon |x|^{\gamma}) \mu(\d x) < \infty\right\}\,.
\end{equation}
We have the following:
\begin{theo}\label{theo:expo} 
Let $\mu_{0}\in \P_{\gmm}^0(\R)$ be an initial datum and $(\mu_{t})_{t}$ be a measure weak solution to \eqref{eq:1} associated to $\mu_{0}$.  If $\mu_0 \in \P_{\exp,\gamma}(\R)$ and if $(\mu_t)_t \subset \bigcap_{k\geq 0} \P^0_k(\R)$, then, there exists $\alpha > 0$ and $C > 0$ such that  
\begin{equation} \label{eq:expon2}
\sup_{t \geq 0}\int_{\R}\exp(\alpha |x|^{\gamma})\mu_{t}(\d x) \leq C\,.
\end{equation}
\end{theo}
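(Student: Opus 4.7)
The strategy is to exploit the dissipative nature of the equation: Proposition \ref{prop:decrease} tells us that polynomial moments of order $k\geq 2$ are monotone decreasing in time, so after expanding the exponential as a power series the uniform-in-time bound reduces essentially to the initial integrability \eqref{eq:exp}.

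First, for any $\alpha>0$ and $t\geq 0$, Tonelli's theorem gives
\begin{equation*}
\int_{\R}\exp(\alpha|x|^{\gamma})\mu_{t}(\d x)=\sum_{n=0}^{\infty}\dfrac{\alpha^{n}}{n!}\,M_{n\gamma}(\mu_{t}).
\end{equation*}
Set $n_{0}:=\lceil 2/\gamma\rceil$ and split this series at $n_0$. For $n\geq n_{0}$ one has $n\gamma\geq 2$, and the hypothesis $(\mu_{t})_{t}\subset\bigcap_{k}\P_{k}^{0}(\R)$ lets us invoke Proposition \ref{prop:decrease}(1) with $k=n\gamma$ (choosing any $p>n\gamma+\gamma$ for which $\mu_{t}\in\P_{p}^{0}$), yielding the monotone bound $M_{n\gamma}(\mu_{t})\leq M_{n\gamma}(\mu_{0})$ uniformly in $t$. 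For the finitely many indices $n<n_{0}$, log-convexity of the moment function together with $M_{0}(\mu_t)=1$ gives $M_{n\gamma}(\mu_{t})\leq 1+M_{2}(\mu_{t})\leq 1+\|\mu_{0}\|_{\gmm}$, the last bound coming from $M_{2}(\mu_{t})\leq\|\mu_{t}\|_{\gmm}\leq\|\mu_{0}\|_{\gmm}$ in Definition \ref{defi:weak}.

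Choosing $\alpha=\varepsilon$ with the $\varepsilon>0$ provided by \eqref{eq:exp} and summing,
\begin{equation*}
\int_{\R}\exp(\varepsilon|x|^{\gamma})\mu_{t}(\d x)\leq \bigl(1+\|\mu_{0}\|_{\gmm}\bigr)\sum_{n<n_{0}}\dfrac{\varepsilon^{n}}{n!}+\sum_{n\geq n_{0}}\dfrac{\varepsilon^{n}}{n!}\,M_{n\gamma}(\mu_{0})\leq C,
\end{equation*}
where the second sum is bounded above by $\int_{\R}\exp(\varepsilon|x|^{\gamma})\mu_{0}(\d x)<\infty$ and the first is a finite constant independent of $t$; this proves \eqref{eq:expon2} with $\alpha=\varepsilon$.

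The main technical obstacle is not the combinatorics but justifying that Proposition \ref{prop:decrease}(1) can be invoked uniformly in $n$: its proof hinges on the differentiation identity \eqref{eq:deriv}, whose validity requires $\sup_{\delta<t<T}\|\mu_{t}\|_{p}<\infty$ for some $p>n\gamma+\gamma$. The hypothesis $(\mu_{t})_{t}\subset\bigcap_{k}\P_{k}^{0}(\R)$ is only pointwise in $t$, so one has to upgrade it to local-in-time uniform control of high moments. This is achieved by bootstrapping from the $\gmm$-moment bound of Definition \ref{defi:weak}, invoking the short-time decay \eqref{eq:decay2} to cascade uniform bounds to progressively higher orders, and using the continuity in $t$ of each moment map guaranteed by the weak formulation. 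Should this bootstrap prove delicate, an alternative is to work with smooth truncations $\varphi_{n,N}(x)=|x|^{n\gamma}\chi(x/N)\in\C_{b}(\R)$ in \eqref{eq:sol}, derive the analogous monotonicity for the truncated moments, and send $N\to\infty$ via Fatou to recover $M_{n\gamma}(\mu_{t})\leq M_{n\gamma}(\mu_{0})$ without having to appeal to Proposition \ref{prop:decrease} directly.
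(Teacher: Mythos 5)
Your argument is essentially the paper's own proof: expand $\exp(\alpha|x|^{\gamma})$ into the moment series $\sum_{n}\frac{\alpha^{n}}{n!}M_{n\gamma}(\mu_{t})$, control the finitely many low-order terms through $M_{n\gamma}(\mu_{t})\leq\|\mu_{t}\|_{\gmm}\leq\|\mu_{0}\|_{\gmm}$, use the monotone decrease $M_{n\gamma}(\mu_{t})\leq M_{n\gamma}(\mu_{0})$ for $n\gamma\geq 2$ coming from \eqref{eq:Qmuk}, and sum against the initial exponential bound. Your remark about upgrading pointwise membership in $\bigcap_{k}\P^{0}_{k}(\R)$ to the locally-uniform moment bound needed for \eqref{eq:deriv} is a genuine point of care that the paper also leaves implicit (the constructed solutions of Proposition \ref{prop:existence} do satisfy it), and your truncation fallback is a valid way to close it — but it does not change the underlying approach.
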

\begin{proof}
Since  $\mu_{0}\in \P_{\exp,\gamma}(\R)$, there exists $\alpha>0$ and $C_0>0$ such that
\begin{equation*}
\int_\R \exp(\alpha |x|^\gamma)\mu_0(\d x) \leq C_0\,.
\end{equation*}
Let us denote by $p_0$ the integer such that $\gamma\, p_0\geq \gmm$ and $\gamma\,(p_0-1)<\gmm$.  Thus, for $0\leq p\leq p_0-1$ and $t\geq 0$,
\begin{equation*} 
M_{\gamma p } (\mu_t)\leq \|\mu_t\|_{\gmm}\leq \|\mu_0\|_{\gmm}\,.
\end{equation*}
For $p\geq p_0$ and $t\geq 0$, one deduces from \eqref{eq:Qmuk} that
\begin{equation*} 
M_{\gamma p } (\mu_t)\leq M_{\gamma p } (\mu_0)\,.
\end{equation*}
Thus, for any $t\geq 0$ and any $n>p_0$
\begin{equation*}
\sum_{p=0}^n M_{\gamma p } (\mu_t) \frac{\alpha^p}{p!} \leq 
\|\mu_0\|_{\gmm} \sum_{p=0}^{p_0-1} \frac{\alpha^p}{p!} + \sum_{p=p_0}^n M_{\gamma p } (\mu_0) \frac{\alpha^p}{p!} \leq \|\mu_0\|_{\gmm} \exp(\alpha) +C_0\,.
\end{equation*}
Letting $n\to +\infty$ we get \eqref{eq:expon2}.
\end{proof}
\subsection{Cauchy theory}\label{sec:steps} The main ingredients of the proof of Theorem \ref{theo:cauchy} are Propositions \ref{prop:existence} and \ref{prop:stab}.  The details of the proof can be found in Appendix \ref{sec:cauchyA}. Namely, by studying first the Cauchy problem for $L^{1}$ initial data and then introducing suitable approximation we can construct weak measure solutions to \eqref{eq:1} leading the following existence result:
\begin{prop}\label{prop:existence}
For any $\mu_{0} \in \P_{\gmm}^{0}(\R)$, $\mu_{0} \neq \delta_{0}$, there exists a measure weak solution $(\mu_{t})_{t \geq 0} \subset \P_{\gmm}^{0}(\R)$ associated to $\mu_{0}$ and such that
\begin{equation*}
\sup_{t \geq 0}\|\mu_{t}\|_{\gmm} \leq \|\mu_{0}\|_{\gmm} \qquad \text{ and }  \qquad \sup_{t \geq t_{0}}\|\mu_{t}\|_{s} < \infty \qquad \forall\, t_{0} > 0,\, s > \gmm\,.
\end{equation*}
Moreover, if $\mu_{0} \in \bigcap_{k\geq 0} \P^0_k(\R)$, then $(\mu_t)_{t\geq 0} \subset \bigcap_{k\geq 0} \P^0_k(\R)$.
\end{prop}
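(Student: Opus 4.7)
The plan is to obtain $(\mu_t)_{t\geq 0}$ as a weak-$\star$ limit of a sequence of $L^1$ solutions associated to regularised initial data, relying on Propositions \ref{prop:a priori} and \ref{prop:decrease} for all the uniform control that is needed. Start by choosing a sequence $f_0^n \in L^1(\R) \cap L^\infty(\R)$ of nonnegative compactly supported densities with $\int_{\R} f_0^n(x)\,\d x = 1$, $\int_{\R} x f_0^n(x)\,\d x = 0$, $\|f_0^n\|_{\gmm}\to \|\mu_0\|_{\gmm}$ and $f_0^n(x)\,\d x \rightharpoonup \mu_0$ weak-$\star$ (mollify $\mu_0$ against a smooth kernel of shrinking width, multiply by a compactly supported cut-off, and perform a small affine adjustment to restore unit mass and zero mean). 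For each $n$, construct a solution $f^n \in \C([0,\infty);L^1_{\gmm}(\R))$ of \eqref{eq:1} with initial datum $f_0^n$; this is done by truncating $|y|^{\gamma}$ to $|y|^{\gamma}\wedge N$, running a Banach fixed point on the associated mild formulation in a moment-weighted $L^1$ space, and removing the truncation thanks to the a priori bound of Proposition \ref{prop:decrease}(1). Since $f_0^n \in \bigcap_{k \geq 0} L^1_k(\R)$, that same proposition propagates all moments, so $(f_t^n)_{t\geq 0} \subset \bigcap_{k \geq 0} \P_k^0(\R)$, and \eqref{eq:decay2} yields the uniform bounds
\begin{equation*}
\sup_{t \geq 0,\,n} \|f_t^n\|_{\gmm} \leq \|\mu_0\|_{\gmm}, \qquad
M_k(f_t^n) \leq C_k(\gamma)\,\|\mu_0\|_{\gmm}\,\min\{t^{-(k-2)/\gamma},\, t^{-k/\gamma}\} \quad (k > 2),
\end{equation*}
with $C_k(\gamma)$ independent of $n$.

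Set $\mu_t^n := f_t^n(x)\,\d x$ and extract a convergent subsequence via Arzelà--Ascoli. Tightness of $\{\mu_t^n\}_n$ at every fixed $t$ is immediate from the uniform $\gmm$-moment bound. Time equicontinuity is read directly from the weak formulation: for $\varphi \in \C_b(\R)$ and $0 \leq t_1 \leq t_2$, the estimate $|\la \Q(\mu_\tau^n, \mu_\tau^n); \varphi\ra| \leq 2\|\varphi\|_\infty \|\mu_\tau^n\|_\gamma^2 \leq 2\|\varphi\|_\infty\|\mu_0\|_{\gmm}^2$ gives
\begin{equation*}
\Big|\int_{\R}\varphi\,\d(\mu_{t_2}^n - \mu_{t_1}^n)\Big| \leq 2\|\varphi\|_\infty\|\mu_0\|_{\gmm}^2 (t_2 - t_1),
\end{equation*}
uniformly in $n$. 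Combined with tightness and a diagonal extraction on a countable dense family of $\C_b$-test functions, this produces a subsequence (still indexed by $n$) and a curve $(\mu_t)_{t\geq 0} \in \C_{\text{weak}}([0,T]; \P_{\gmm}(\R))$ with $\mu_t^n \rightharpoonup \mu_t$ weak-$\star$ for every $t$. Lower semicontinuity of $\mu \mapsto \|\mu\|_{\gmm}$ preserves the uniform $\gmm$-bound; the high-moment bounds pass to the limit analogously; and conservation of mass and mean is inherited because $1$ and $x$ have subpolynomial growth relative to $\la x\ra^{\gmm}$, so they are uniformly integrable against $\{\mu_t^n\}_n$.

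It remains to pass to the limit in the weak formulation \eqref{eq:sol}. Fix $\varphi \in \C_b(\R)$ and $p > \gamma$. Since $\sup_{t \geq \delta,\,n} \|\mu_t^n\|_p < \infty$ by the uniform high-moment estimate, Lemma \ref{lem_lu} applies to $\psi(x,y) = |x-y|^\gamma \Delta \varphi(x,y)$ (continuous, with $\psi(x,y)/(\la x\ra^p + \la y\ra^p) \to 0$ at infinity), giving $\la \Q(\mu_\tau^n, \mu_\tau^n); \varphi\ra \to \la \Q(\mu_\tau, \mu_\tau); \varphi\ra$ for every $\tau > 0$. Dominated convergence combined with the uniform-in-$n$ bound on this quantity yields the integral identity \eqref{eq:sol} for $(\mu_t)_{t \geq 0}$, which is therefore a measure weak solution in the sense of Definition \ref{defi:weak}. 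The final assertion (propagation of all moments when $\mu_0 \in \bigcap_k \P_k^0(\R)$) follows by additionally selecting $f_0^n$ so that $M_k(f_0^n) \to M_k(\mu_0)$ for every $k$; Proposition \ref{prop:decrease}(1) then provides uniform-in-$n$ bounds $\sup_{t\geq 0}M_k(f_t^n) \leq M_k(f_0^n)$ which pass to the limit by lower semicontinuity. The main obstacle is arguably the $L^1$-Cauchy theory for the regularised data: the unbounded kernel $|y|^\gamma$ forces one to work in moment-weighted $L^1$ spaces and to keep track of enough moments to close the fixed point, before removing the truncation on $|y|^\gamma$; this is the delicate but standard piece of the argument that motivates starting the approximation from compactly supported densities.
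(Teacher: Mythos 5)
Your overall architecture matches the paper's: regularise $\mu_0$ by $L^1$ densities, solve the Cauchy problem in $L^1$, establish uniform-in-$n$ moment bounds via Propositions \ref{prop:a priori}--\ref{prop:decrease}, then pass to the weak-$\star$ limit by tightness plus time-equicontinuity, using Lemma \ref{lem_lu} to carry the collision term to the limit. The one genuine difference is in how the $L^1$ Cauchy theory (the analogue of Theorem \ref{theo:L1}) is built. You propose truncating $|y|^\gamma$ to $|y|^\gamma\wedge N$, running a Banach fixed point on the mild formulation, and removing the truncation. The paper instead invokes Martin's theorem for nonlinear ODEs in Banach spaces \`{a} la Bressan: it checks H\"older continuity of $f\mapsto \Q(f,f)$ on the convex set $\Omega_{K,\delta}$, a one-sided Lipschitz estimate $[f-g,\Q(f,f)-\Q(g,g)]_-\leq L\|f-g\|_{L^1_2}$, and a sub-tangent condition, none of which requires truncating the kernel. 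Both routes are legitimate; the sub-tangent approach avoids the limit $N\to\infty$ entirely, while your route yields local existence cheaply from a standard fixed point.

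The place where your write-up is too optimistic is the statement that the truncation is removed \emph{thanks to the a priori bound of Proposition \ref{prop:decrease}(1)}. Moment bounds give you tightness, hence weak-$\star$ subsequential convergence of the truncated solutions $f^{N}_t$; they do \emph{not} by themselves prevent the limit from acquiring a singular part. To keep the limit in $L^1_{\gmm}(\R)$ you need a genuine stability estimate showing that $(f^{N}_t)_N$ is Cauchy in $L^1$, and the natural tool is precisely the one-sided Lipschitz inequality the paper establishes (whose constant, importantly, is uniform in the truncation parameter because the damaging high-energy terms cancel inside $Q^-$). Also note that for the truncated kernel the strong decay \eqref{eq:decay} of Proposition \ref{prop:decrease}(1) degrades (Jensen is applied to $|x-y|^{\gamma\wedge N}|x-y|^k$, not $|x-y|^{\gamma+k}$), so only the monotonicity $M_k(f^N_t)\leq M_k(f^N_0)$ survives verbatim; fortunately that is all one needs. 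With that strengthening --- truncation removal via one-sided Lipschitz stability rather than via moment bounds alone --- your proposal is a correct alternative to the paper's $L^1$ Cauchy theory, and the rest of the passage-to-the-limit argument coincides with the paper's.
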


To achieve the proof of Theorem \ref{theo:cauchy}, it remains only to prove the uniqueness of the solution. It seems difficult here to adapt the strategy of \cite{Lu} for which the conservation of energy played a crucial role. For this reason, we rather follow the approach of \cite{FM} which requires the exponential tail estimate \eqref{eq:exp}. The main step towards uniqueness is the  following \emph{Log-Lipschitz estimate} for the Kantorovich-Rubinstein distance.

\begin{prop}\label{prop:stab}
Let $\mu_0$ and $\nu_{0}$ be two probability measures in $\P_{2}^{0}(\R)$ satisfying  \eqref{eq:exp} i.e., there exists $\varepsilon > 0$ such that
\begin{equation*}
\int_{\R}\exp\left(\varepsilon \,|x|^{\gamma}\right)\mu_{0}(\d x)+ \int_{\R}\exp\left(\varepsilon \,|x|^{\gamma}\right)\nu_{0}(\d x) < \infty\,.
\end{equation*}
Let then $(\mu_{t})_{t \geq 0}$ and $(\nu_{t})_{t \geq 0}$ be two measure weak solutions to \eqref{eq:1} associated respectively to the initial data $\mu_{0}$ and $\nu_{0}$.  There exists $K_{\varepsilon}  > 0$ such that, for any $T > 0$,  
\begin{align}\label{eq:Wt}
d_{\mathrm{KR}}&(\mu_{t},\nu_{t}) \leq d_{\mathrm{KR}}(\mu_{0},\nu_{0}) \\
&+ K_{\varepsilon}\,C_{T}(\varepsilon) \int_{0}^{t} d_{\mathrm{KR}}(\mu_{s},\nu_{s})\Big(1+\big|\log d_{\mathrm{KR}}(\mu_{s},\nu_{s})\big|\Big)\d s \qquad \forall\, t \in [0,T]\,, \nonumber
\end{align}
where $C_{T}(\varepsilon)=\ds\sup_{t \in [0,T]}\int_{\R}\exp\left(\varepsilon \,|x|^{\gamma}\right)\left(\mu_{t}+\nu_{t}\right)(\d x)<\infty$.
\end{prop}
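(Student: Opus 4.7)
The plan is to combine the Kantorovich--Rubinstein duality
\begin{equation*}
d_{\mathrm{KR}}(\mu_{t},\nu_{t}) = \sup_{\varphi\in\mathrm{Lip}_{1}(\R)}\int_{\R}\varphi\,\d(\mu_{t}-\nu_{t})
\end{equation*}
with the weak formulation of solutions in order to reduce \eqref{eq:Wt} to a single bilinear estimate on the collision operator. Every $\varphi\in\mathrm{Lip}_{1}(\R)$ satisfies $|\varphi(x)|\leq |\varphi(0)|+|x|$, hence $\varphi\in L^{\infty}_{-1}(\R)\cap\C(\R)$; combined with the exponential tail bound \eqref{eq:expon2} of Theorem~\ref{theo:expo} (which under \eqref{eq:exp} secures \eqref{eq:hypmo} for every $p$), Proposition~\ref{prop:a priori} yields
\begin{equation*}
\int_{\R}\varphi\,\d(\mu_{t}-\nu_{t}) = \int_{\R}\varphi\,\d(\mu_{0}-\nu_{0}) + \int_{0}^{t}\mathcal{A}_{\varphi}(\mu_{s},\nu_{s})\,\d s,
\end{equation*}
where $\mathcal{A}_{\varphi}(\mu,\nu) := \la \Q(\mu,\mu)\,;\,\varphi\ra - \la \Q(\nu,\nu)\,;\,\varphi\ra$. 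Proposition~\ref{prop:stab} thus reduces to establishing, uniformly in $\varphi\in\mathrm{Lip}_{1}(\R)$, the bilinear bound
\begin{equation*}
|\mathcal{A}_{\varphi}(\mu_{s},\nu_{s})| \leq K_{\varepsilon}\,C_{T}(\varepsilon)\,\rho_{s}\,(1+|\log\rho_{s}|),\qquad \rho_{s}:=d_{\mathrm{KR}}(\mu_{s},\nu_{s}),
\end{equation*}
whereupon taking the supremum over $\varphi$ directly yields \eqref{eq:Wt}.

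For the bilinear estimate I would introduce an optimal $d_{\mathrm{KR}}$-coupling $\pi_{s}\in\Pi(\mu_{s},\nu_{s})$ and rewrite
\begin{equation*}
2\mathcal{A}_{\varphi}(\mu_{s},\nu_{s}) = \iint_{\R^{4}}\bigl[F(x,y)-F(x',y')\bigr]\,\pi_{s}(\d x,\d x')\pi_{s}(\d y,\d y'),\quad F(x,y):=|x-y|^{\gamma}\Delta\varphi(x,y).
\end{equation*}
Two elementary consequences of $\varphi\in\mathrm{Lip}_{1}(\R)$ enter repeatedly: $|\Delta\varphi(x,y)|\leq 2\min(a,b)|x-y|$ and $|\Delta\varphi(x,y)-\Delta\varphi(x',y')|\leq 2(|x-x'|+|y-y'|)$, whence in particular $|F(x,y)|\leq 2|x-y|^{\gamma+1}$. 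I then split the integrand using a truncation radius $R>1$. On the outer region $\{\max(|x|,|y|,|x'|,|y'|)>R\}$, the bound $|F|\leq 2|x-y|^{\gamma+1}$ combined with Chebyshev's inequality applied to $\int\exp(\varepsilon|z|^{\gamma})(\mu_{s}+\nu_{s})(\d z)\leq C_{T}(\varepsilon)$ produces a contribution of order $C_{T}(\varepsilon)\exp(-\eta R^{\gamma})$ for some $\eta=\eta(\varepsilon)>0$. On the inner region, a localized Lipschitz-like bound for $F$ (with constant growing polynomially in $R$) together with the fact that $\pi_{s}$ realizes $d_{\mathrm{KR}}(\mu_{s},\nu_{s})$ produces a contribution of order $P(R)\rho_{s}$. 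Balancing by choosing $R^{\gamma}\sim\eta^{-1}|\log\rho_{s}|$ yields the Log-Lipschitz bound.

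The main obstacle is obtaining precisely the $\rho_{s}(1+|\log\rho_{s}|)$ dependence, rather than a worse $\rho_{s}|\log\rho_{s}|^{\beta}$ with $\beta>1$. When $\gamma\geq 1$, $F(\cdot,y)$ is genuinely Lipschitz with polynomial-in-$R$ constant on the inner region and the outline above works directly. When $\gamma<1$, the map $z\mapsto|z|^{\gamma}$ is only H\"older at the origin and a direct Lipschitz bound for $F$ fails near the diagonal $x=y$; to cope with this, I would mollify $|z|^{\gamma}$ at a scale $\delta>0$ by, e.g., $\psi_{\delta}(z)=(z^{2}+\delta^{2})^{\gamma/2}$, which satisfies $|\psi_{\delta}(z)-|z|^{\gamma}|\leq \delta^{\gamma}$ and has Lipschitz constant $O(\delta^{\gamma-1})$, replace $F$ by $F_{\delta}(x,y):=\psi_{\delta}(x-y)\Delta\varphi(x,y)$, and tune $\delta$ against both $R$ and the exponential tails. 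It is precisely at this balance that the exponential tail assumption \eqref{eq:exp} is used, which explains its role in the statement. Once the bilinear bound is established, inserting it into the integral identity above and taking the supremum over $\varphi\in\mathrm{Lip}_{1}(\R)$ immediately gives \eqref{eq:Wt}.
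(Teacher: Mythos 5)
Your overall plan is the same as the paper's: pass to the weak formulation, introduce an optimal $d_{\mathrm{KR}}$--coupling $\pi_{s}$ to couple the two solutions, split the resulting integrand and control the singular piece by a truncation at a scale tuned against the exponential tails, which forces $R^{\gamma}\sim|\log\rho_{s}|$ and produces the Log--Lipschitz rate. The reduction to a bilinear bound via Proposition~\ref{prop:a priori} and the $H_{1}/H_{2}$--type split implicit in your ``inner region / outer region'' dichotomy are both present in the paper.

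The genuine problem is in your treatment of the case $\gamma<1$. You argue that $z\mapsto|z|^{\gamma}$ is only H\"older at the origin and therefore propose replacing $|z|^{\gamma}$ by $\psi_{\delta}(z)=(z^{2}+\delta^{2})^{\gamma/2}$ and tuning a third parameter $\delta$. This is both unnecessary and, as sketched, does not close. It is unnecessary because the non--Lipschitz point $x=y$ is exactly where the companion factor $\Delta\varphi(x,y)$ vanishes: you already noted $|\Delta\varphi(x,y)|\leq 2\min(\a,\b)\,|x-y|$, and this cancellation is precisely what makes $F$ effectively Lipschitz across the diagonal for \emph{all} $\gamma>0$. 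The paper exploits this via the elementary inequality
\begin{equation*}
\big|A^{\gamma}-B^{\gamma}\big|\,A\ \leq\ (1+\gamma)\,\max(A,B)^{\gamma}\,\big|A-B\big|\,,\qquad A,B\geq 0\,,\ \gamma>0\,,
\end{equation*}
proved by writing $A=\min\{A,B\}+(A-B)_{+}$. Applied with $A=|x-y|$, $B=|v-w|$ (paired with the factor $|\Delta_{0}\phi(x,y)|\leq \b A$), together with the Lipschitz bound on $\Delta_{0}\phi$, this yields in one stroke a pointwise bound of the form $\big(|x|^{\gamma}+|y|^{\gamma}+|v|^{\gamma}+|w|^{\gamma}\big)\big(|x-v|+|y-w|\big)$, after which only \emph{one} factor of moments needs truncating, and the balance $r^{\gamma}=|2\log W(t)/\varepsilon|$ gives exactly $W(t)\big(1+|\log W(t)|\big)$. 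Your mollification, by contrast, introduces an error $|F-F_{\delta}|\lesssim \delta^{\gamma}|x-y|$, whose integral against $\pi_{s}\otimes\pi_{s}$ is $O(R\,\delta^{\gamma})$ and is \emph{independent} of $\rho_{s}$; to make it $O(\rho_{s}(1+|\log\rho_{s}|))$ you would need $\delta^{\gamma}\lesssim\rho_{s}$, which inflates the Lipschitz constant $\delta^{\gamma-1}\sim\rho_{s}^{1-1/\gamma}$ of $\psi_{\delta}$ (recall $\gamma<1$, so $1-1/\gamma<0$) and destroys the target rate. So the detour is not merely cosmetic --- the parameter balance fails. Drop the mollification, use the elementary inequality above, and your outline becomes the paper's proof.

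One minor bookkeeping point: to invoke Proposition~\ref{prop:a priori} with $\varphi\in\mathrm{Lip}_{1}(\R)\subset L^{\infty}_{-1}(\R)\cap\C(\R)$ you need \eqref{eq:hypmo} with some $p>1+\gamma$; under \eqref{eq:exp} this indeed follows from Theorem~\ref{theo:expo} (or, more simply, from Proposition~\ref{prop:decrease}), but since Theorem~\ref{theo:expo} carries the extra hypothesis $(\mu_{t})_{t}\subset\bigcap_{k}\P^{0}_{k}(\R)$, you should say explicitly which moment--propagation result supplies $C_{T}(\varepsilon)<\infty$.
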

\begin{proof}[Proof of Theorem \ref{theo:cauchy}] Given Proposition \ref{prop:existence}, to prove Theorem \ref{theo:cauchy} it suffices to show the uniqueness of measure weak solutions to \eqref{eq:1}.  Let $\mu_0$ be a probability measure in $\P_{\gmm}^{0}(\R)$ satisfying \eqref{eq:exp} and let $(\mu_{t})_{t \geq 0}$ and $(\nu_{t})_{t \geq 0}$  be two measure weak solutions to \eqref{eq:1} associated to $\mu_{0}$.  From Proposition \ref{prop:stab}, given $T > 0$ there exists a finite positive constant $K_{T}$ such that
\begin{equation*}
d_{\mathrm{KR}}(\mu_{t},\nu_{t}) \leq K_T \int_{0}^{t} \mathbf{\Phi}(d_{\mathrm{KR}}(\mu_{s},\nu_{s})) \d s \qquad \forall t \in [0,T]\,,
\end{equation*}
with $\mathbf{\Phi}(r)=r\left(1+|\log r|\right)$ for any $r > 0$.  Since $\mathbf{\Phi}$ satisfies the so-called \textit{Osgood condition}
\begin{equation}\label{eq:osgood}
\int_{0}^{1}\dfrac{\d r}{\mathbf{\Phi}(r)}=\infty\,,
\end{equation} 
then, a nonlinear version of Gronwall Lemma (see for instance \cite[Lemma 3.4, p. 125]{chemin}) asserts that $d_{\mathrm{KR}}(\mu_{t},\nu_{t})=0$ for any $t \in [0,T]$.   Since $T >0$ is arbitrary, this proves the uniqueness.
\end{proof}
The existence and uniqueness of weak measure solution to \eqref{eq:1} allows to define a semiflow $(\mathcal{S}_{t})_{t \geq 0}$  on $\P_{\exp,\gamma}(\R)$, (recall \eqref{eq:pexp}). Namely, Theorem \ref{theo:cauchy} together with Theorem \ref{theo:expo} assert  that, for any $\mu_{0} \in \P_{\exp,\gamma}(\R) \cap \P^{0}(\R)$, there exists a unique weak measure solution $(\mu_{t})_{t \geq 0}$ to \eqref{eq:1} with $\mu_{t} \in \P_{\exp,\gamma}(\R)$ for any $t \geq 0$ and we shall denote
\begin{equation*}
\mu_{t}:=\mathcal{S}_{t}(\mu_{0}) \qquad \forall t \geq 0\,.
\end{equation*}
Then, the semiflow $\mathcal{S}_{t}$ is a well-defined nonlinear mapping from $\P_{\exp,\gamma}(\R) \cap \P^{0}(\R)$ into itself.  Moreover, by definition of weak solution, the mapping $t \mapsto \mathcal{S}_{t}(\mu_{0})$ belongs to $\C_{\text{weak}}([0,\infty),\P_{2}(\R))$.  One has the following weak continuity result for the semiflow.
\begin{prop}\label{prop:cont}
The semiflow $(\mathcal{S}_{t})_{t \geq 0}$ is weakly continuous on $\P_{1}(\R)$ in the following sense.  Let $(\mu_{n})_{n} \in \P_{\gmm}^{0}(\R)$ be a sequence such that there exists $\varepsilon > 0$ satisfying 
\begin{equation}\label{eq:uniformdelta}
\sup_{n \in \mathbb{N}}\int_{\R}\exp(\varepsilon |x|^{\gamma})\mu_{n}(\d x) < \infty\,.
\end{equation} 
If $(\mu_{n})_{n}$ converges to some $\mu \in \P_{\gmm}^{0}(\R)$ in the weak-$\star$ topology, then for any $t \geq 0$,
\begin{equation*} 
\mathcal{S}_{t}(\mu_{n}) \longrightarrow \mathcal{S}_{t}(\mu) \qquad \text{ in the weak-$\star$ topology as $n \to \infty$}\,.
\end{equation*}
\end{prop}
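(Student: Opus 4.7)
The strategy is to run the Log-Lipschitz estimate of Proposition~\ref{prop:stab} on the pairs $(\mu_n,\mu)$, exploit the uniform exponential tail hypothesis \eqref{eq:uniformdelta} to make every constant independent of $n$, and then close the argument with a quantitative Osgood--Gronwall inequality that propagates initial closeness to all finite times.

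First, I would check that $d_{\mathrm{KR}}(\mu_n,\mu) \to 0$. The uniform bound \eqref{eq:uniformdelta} implies $\sup_{n} M_{s}(\mu_n) < \infty$ for every $s \geq 0$, hence in particular uniform integrability of the first moment along the sequence. Combined with the weak-$\star$ convergence $\mu_n \to \mu$, this yields convergence in the Wasserstein distance $W_1 = d_{\mathrm{KR}}$ by the standard equivalence between $W_1$-convergence and weak-$\star$ convergence for probability measures with uniformly integrable first moment (cf.\ \cite[Theorem 7.12]{vil}).

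Next, fix $T > 0$. By Theorem~\ref{theo:expo}, applied to each $\mathcal{S}_t(\mu_n)$ with the common $\varepsilon$ of \eqref{eq:uniformdelta}, the proof produces an exponent $\alpha > 0$ and a constant $\tilde C > 0$ depending only on the supremum in \eqref{eq:uniformdelta} such that
\begin{equation*}
\sup_{n \in \N}\,\sup_{t \geq 0}\int_{\R}\exp(\alpha |x|^{\gamma})\,\mathcal{S}_t(\mu_n)(\d x) \leq \tilde C,
\end{equation*}
and the analogous bound holds for $\mathcal{S}_t(\mu)$. Consequently, the quantity $C_T(\varepsilon)$ of Proposition~\ref{prop:stab} applied to the pair $(\mathcal{S}_{\cdot}(\mu_n),\mathcal{S}_{\cdot}(\mu))$ is bounded by some $\tilde C_T$ independent of $n$. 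Setting $u_n(t) := d_{\mathrm{KR}}(\mathcal{S}_t(\mu_n),\mathcal{S}_t(\mu))$ and $\epsilon_n := d_{\mathrm{KR}}(\mu_n,\mu)$, Proposition~\ref{prop:stab} then gives
\begin{equation*}
u_n(t) \leq \epsilon_n + K_T \int_{0}^{t}\mathbf{\Phi}(u_n(s))\,\d s, \qquad t \in [0,T],
\end{equation*}
with $\mathbf{\Phi}(r) = r(1 + |\log r|)$ and $K_T$ independent of $n$.

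Finally I would invoke a quantitative form of the Osgood lemma: if $F$ is an antiderivative of $1/\mathbf{\Phi}$ on $(0,1)$, then \eqref{eq:osgood} gives $F(r) \to -\infty$ as $r \to 0^+$, and the integral inequality above yields $u_n(t) \leq F^{-1}\bigl(F(\epsilon_n) + K_T t\bigr)$ as long as $u_n(t) < 1$. Since $\epsilon_n \to 0$ by the first step, this forces $u_n(t) \to 0$ for every $t \in [0,T]$; because $T$ is arbitrary, $\mathcal{S}_t(\mu_n) \to \mathcal{S}_t(\mu)$ in $W_1$, and \emph{a fortiori} in the weak-$\star$ topology. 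The one genuinely delicate point is to verify that the constants supplied by Theorem~\ref{theo:expo} and by Proposition~\ref{prop:stab} can be chosen uniformly in $n$; this reduces to tracking their quantitative dependence on the exponential moment of the initial datum, which is controlled by the supremum in \eqref{eq:uniformdelta}.
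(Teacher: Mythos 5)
Your proof follows the same route as the paper: pass to the log-Lipschitz estimate of Proposition~\ref{prop:stab}, use Theorem~\ref{theo:expo} to make the constant $C_T(\varepsilon)$ uniform in $n$, and close with the Osgood/nonlinear Gronwall inequality, concluding that $d_{\mathrm{KR}}(\mathcal{S}_t(\mu_n),\mathcal{S}_t(\mu))\to 0$ because the antiderivative of $1/\mathbf{\Phi}$ blows up at $0^+$ (the paper writes this via $\mathbf{\Psi}$, you write it via $F=-\mathbf{\Psi}$). Your opening step, justifying $d_{\mathrm{KR}}(\mu_n,\mu)\to 0$ by uniform integrability of the first moment rather than by a bare appeal to ``$d_{\mathrm{KR}}$ metrizes weak-$\star$ on $\P_1$'', is actually slightly more careful than the paper's wording, but the argument is otherwise identical.
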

\begin{proof}
The proof is based upon the stability result established in Proposition \ref{prop:stab}. Namely, because $(\mu_{n})_{n}$ converges in the weak-$\star$ topology to $\mu \in \P_{\gmm}^{0}(\R)$, one deduces from  \eqref{eq:uniformdelta} that
\begin{equation*} 
\int_{\R}\exp(\varepsilon |x|^{\gamma}) \mu(\d x) < \infty,
\end{equation*}
which means that all $\mu_{n}$ and $\mu$ share the same exponential tail estimate with \emph{some common} $\varepsilon > 0$. Then, for any $T > 0$ one deduces from \eqref{eq:Wt} that
\begin{align*}
d_{\mathrm{KR}}(\mathcal{S}_{t}(\mu_{n})\,,\,&\mathcal{S}_{t}(\mu)) \leq d_{\mathrm{KR}}(\mu_{n}\,,\,\mu) \\
&+ K(\varepsilon)\,C_{T}(\varepsilon)\int_{0}^{t} \mathbf{\Phi}\left(d_{\mathrm{KR}}(\mathcal{S}_{s}(\mu_{n})\,,\,\mathcal{S}_{s}(\mu)\right) \d s \qquad \forall\, t \in [0,T],\;n \in \mathbb{N}\,,
\end{align*}
for some universal positive constant $K(\varepsilon)$ and
\begin{equation*} 
C_{T}(\varepsilon)=\sup_{n \in \mathbb{N}}\sup_{t \in [0,T]}\int_{\R}\exp(\varepsilon |x|^{\gamma})\left(\mathcal{S}_{t}(\mu_{n})+\mathcal{S}_{t}(\mu)\right)(\d x) < \infty
\end{equation*}
according to Theorem \ref{theo:expo}. Here above, $\mathbf{\Phi}(r)=r\left(1+|\log r|\right)$ satisfies the Osgood condition \eqref{eq:osgood}, thus, using again a nonlinear version of Gronwall Lemma \cite[Lemma 3.4]{chemin} we deduce from this estimate that
\begin{equation*}
\mathbf{\Psi}\big(d_{\mathrm{KR}}\left(\mu_{n}\,,\,\mu\right)\big)-\mathbf{\Psi}\big(d_{\mathrm{KR}}\left(\mathcal{S}_{t}(\mu_{n})\,,\,\mathcal{S}_{t}(\mu)\right)\big) \leq K(\varepsilon)\,C_{T}(\varepsilon)\,t \qquad \forall\, t \in [0,T],\; n \in \mathbb{N}
\end{equation*}
where
\begin{equation*}
\mathbf{\Psi}(x)=\int_{x}^{1}\dfrac{\d r}{\mathbf{\Phi}(r)} \qquad \forall\, x \geq 0\,.
\end{equation*}
Taking now the limit $n \to \infty$, since $d_{\mathrm{KR}}$ metrizes the weak-$\star$ topology of $\P_{1}(\R)$ it follows that $\lim_{n\rightarrow\infty}d_{\mathrm{KR}}(\mu_{n},\mu)=0$.  Furthermore, recalling that $\mathbf{\Psi}(0)=\infty$ one concludes that
\begin{equation*} 
\lim_{n \to \infty}\mathbf{\Psi}\big(d_{\mathrm{KR}}\big(\mathcal{S}_{t}(\mu_{n})\,,\,\mathcal{S}_{t}(\mu)\big)\big) =\infty\,.
\end{equation*}
That is, $\lim_{n \to \infty}d_{\mathrm{KR}}\big(\mathcal{S}_{t}(\mu_{n})\,,\,\mathcal{S}_{t}(\mu)\big)=0$ which proves the result.
\end{proof}
\section{Optimal decay of the moments} 
We now prove that the upper bounds obtained for the moments of solutions to \eqref{eq:1} in Proposition \ref{prop:decrease} are actually optimal. 

\subsection{Lower bounds for moments}

We begin with the case $\gamma \in (0,1]$:
\begin{prop}\label{prop:even}
Fix $\gamma \in (0,1]$ and let $\mu_{0} \in \P_{2}^{0}(\R)$ be an initial datum and $(\mu_{t})_{t\geq 0}$ be a measure weak solution to \eqref{eq:1} associated to $\mu_0$.  Then, there exists $\mathcal{K}_{\gamma} > 0$ depending only on $\a$ and $\gamma$ such that
\begin{equation}\label{eq:Mg}
M_{\gamma}(\mu_{t}) \geq \frac{M_{\gamma}(\mu_{0})}{1+\mathcal{K}_{\gamma} M_{\gamma}(\mu_{0})t} \qquad \forall\, t \geq 0\,.
\end{equation}
Thus,
\begin{equation}\label{eq:lowM2}
M_{2}(\mu_{t}) \geq   M_{\gamma}(\mu_{0})^{\frac{2}{\gamma}} \bigg(1+\mathcal{K}_{\gamma}\,M_{\gamma}(\mu_{0}) t\bigg)^{-\frac{2}{\gamma}} \qquad \forall\, t \geq 0\,.
\end{equation}
\end{prop}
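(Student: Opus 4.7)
The plan is to derive a Bernoulli-type differential inequality
$$\frac{\d}{\d t} M_\gamma(\mu_t) \geq -\mathcal{K}_\gamma\, M_\gamma(\mu_t)^2\,,$$
from which \eqref{eq:Mg} follows by elementary ODE comparison, and then to obtain \eqref{eq:lowM2} from \eqref{eq:Mg} by Jensen's inequality.

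Since $\varphi(x)=|x|^\gamma$ lies in $\C(\R)\cap L^{\infty}_{-\gamma}(\R)$, and Proposition \ref{prop:existence} provides the necessary propagation (indeed creation) of all moments for $t>0$, Proposition \ref{prop:a priori} applies with $k=\gamma$ and some $p>2\gamma$ to give
$$\frac{\d}{\d t} M_\gamma(\mu_t) = \la \Q(\mu_t,\mu_t)\,;\,|\cdot|^\gamma\ra = \frac{1}{2}\int_{\R^2} |x-y|^\gamma\,\Delta|\cdot|^\gamma(x,y) \,\mu_t(\d x)\mu_t(\d y)\,.$$
The heart of the argument is the pointwise lower bound
$$\Delta|\cdot|^\gamma(x,y) \geq -2\min(|x|^\gamma,|y|^\gamma)\,,\qquad (x,y)\in\R^2\,,$$
which rests on two applications of the subadditivity $u^\gamma + v^\gamma \geq (u+v)^\gamma$ for $u,v\geq 0$, valid precisely because $\gamma\in(0,1]$. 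First, combined with the triangle inequality $|\a x+\b y|+|\a y+\b x|\geq |x+y|$, subadditivity yields $|\a x+\b y|^\gamma+|\a y+\b x|^\gamma \geq |x+y|^\gamma$. Second, assuming without loss of generality $|x|\geq |y|$, the decomposition $|x|=(|x|-|y|)+|y|$ together with subadditivity gives $(|x|-|y|)^\gamma \geq |x|^\gamma - |y|^\gamma$, hence $|x+y|^\gamma \geq ||x|-|y||^\gamma \geq |x|^\gamma - |y|^\gamma$, so that $\Delta|\cdot|^\gamma(x,y) \geq -2|y|^\gamma = -2\min(|x|^\gamma,|y|^\gamma)$.

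Combining this with subadditivity $|x-y|^\gamma \leq |x|^\gamma + |y|^\gamma$ and the elementary bound $(|x|^\gamma+|y|^\gamma)\min(|x|^\gamma,|y|^\gamma) \leq 2|x|^\gamma|y|^\gamma$ produces $|x-y|^\gamma\,\Delta|\cdot|^\gamma(x,y) \geq -4|x|^\gamma|y|^\gamma$, and integration against $\mu_t\otimes\mu_t$ factorizes into $\la \Q(\mu_t,\mu_t)\,;\,|\cdot|^\gamma\ra \geq -2 M_\gamma(\mu_t)^2$. The trivial case $M_\gamma(\mu_0)=0$ (equivalent, given zero mean, to $\mu_0=\delta_0$) being discarded, the resulting Bernoulli inequality integrates to $M_\gamma(\mu_t)^{-1} \leq M_\gamma(\mu_0)^{-1} + 2t$, which is \eqref{eq:Mg} with $\mathcal{K}_\gamma=2$. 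Finally, since $\mu_t$ is a probability measure and $r\mapsto r^{2/\gamma}$ is convex for $\gamma\leq 2$, Jensen's inequality gives
$$M_2(\mu_t)=\int_\R\big(|x|^\gamma\big)^{2/\gamma}\mu_t(\d x) \geq \Big(\int_\R|x|^\gamma\mu_t(\d x)\Big)^{2/\gamma}=M_\gamma(\mu_t)^{2/\gamma}\,,$$
and substituting \eqref{eq:Mg} yields \eqref{eq:lowM2}.

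The main obstacle is obtaining the \emph{sharp} lower bound on $\Delta|\cdot|^\gamma$. The naive estimate $\Delta \geq -(|x|^\gamma+|y|^\gamma)$ (from merely dropping the nonnegative terms $|\a x+\b y|^\gamma,|\a y+\b x|^\gamma$) would only produce $\la\Q\,;\,|\cdot|^\gamma\ra\geq -C\bigl(M_{2\gamma}+M_\gamma^2\bigr)$, which fails to close as a quadratic inequality in $M_\gamma$ alone — indeed Jensen goes the \emph{wrong} way, $M_{2\gamma}\geq M_\gamma^2$ for probability measures. Keeping the $\min$ on the right-hand side is exactly what allows, after pairing with $|x-y|^\gamma\leq |x|^\gamma+|y|^\gamma$, a clean factorization into $M_\gamma(\mu_t)^2$ and the desired $1/t$-type decay.
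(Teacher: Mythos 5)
Your argument is correct and follows the same route as the paper: derive the Bernoulli inequality $\tfrac{\d}{\d t}M_\gamma(\mu_t)\geq -\mathcal{K}_\gamma M_\gamma(\mu_t)^2$ from a pointwise lower bound $B_\gamma(x,y)\geq -C_\gamma|x|^\gamma|y|^\gamma$, integrate, and then pass from $M_\gamma$ to $M_2$ by Jensen/H\"older. The only difference is that the paper merely asserts this pointwise inequality as elementary, whereas you actually prove it (via subadditivity of $r\mapsto r^\gamma$ and the triangle inequalities), obtaining the clean explicit value $\mathcal{K}_\gamma=2$.
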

\begin{proof} For $\gamma \in (0,1]$ the following elementary inequality holds
\begin{equation}\label{eq:mga}
B_{\gamma}(x,y)=\Big(\left|\a x + \b y \right|^{\gamma} + \left|\a y  + \b x \right|^{\gamma}- |x|^{\gamma}- |y|^{\gamma}\Big)|x-y|^{\gamma} \geq -C_{\gamma}|x|^{\gamma}\,|y|^{\gamma} \quad \forall\, x,y \in \R\,,
\end{equation}
for some positive constant $C_{\gamma} >0$ explicit depending only on $\a$ and $\gamma$.  Using \eqref{eq:deriv} and the weak form \eqref{eq:Qmu} with $\varphi(x)=|x|^{\gamma} \in L^{\infty}_{-2}(\R) \cap \C(\R)$, we get that
\begin{equation*}
\dfrac{\d}{\d t}M_{\gamma}(\mu_{t})=\frac{1}{2}\int_{\R^{2}}B_{\gamma}(x,y)\mu_{t}(\d x)\mu_{t}(\d y) \geq -\frac{C_{\gamma}}{2}\int_{\R^{2}}|x|^{\gamma}\,|y|^{\gamma}\mu_{t}(\d x)\mu_{t}(\d y)
\end{equation*}
from which it follows that
\begin{equation*}
\dfrac{\d}{\d t}M_{\gamma}(\mu_{t}) \geq -\frac{C_{\gamma}}{2} M_{\gamma}(\mu_{t})^{2} \qquad \forall\, t \geq 0\,.
\end{equation*}
This inequality, after integration, yields \eqref{eq:Mg} with $\mathcal{K}_{\gamma}=\frac{C_{\gamma}}{2}$.   Inequality \eqref{eq:lowM2} follows from the fact that according to H\"{o}lder inequality 
\begin{equation*}
M_{\gamma}(\mu_{t}) \leq M_{2}(\mu_{t})^{\frac{\gamma}{2}}\,\|\mu_{t}\|_{0}^{\frac{2-\gamma}{2}}\,,
\end{equation*}
while $\|\mu_{t}\|_{0}=\|\mu_0\|_{0}=1$ for any $t \geq 0$. 
\end{proof}

The remaining case $\gamma > 1$ is more involved.  In this case, inequality \eqref{eq:mga} no longer holds and we need the following result.
\begin{lem}\label{l1}
Fix $\gamma >1$.  For any $p > 1$ and $\mu \in \P_{p+\gamma}^{0}(\R)$ it holds that
\begin{equation}\label{ulb}
\langle \Q(\mu\,,\,\mu)\,;\,|\cdot|^{p}\rangle \leq - \big(1-\beta_{p}(\a)\big)M_{p+\gamma}(\mu) +  \mathcal{R}_{p}(\mu),
\end{equation}
where $\beta_{2p}(\a)=\max\{a^{p-1},b^{p-1}\}$ and 
\begin{equation*}
\mathcal{R}_{p}(\mu)=2^{\gamma-1}\beta_{p}(\a)  \sum^{\left[\frac{p+1}{2}\right]}_{k=1}\Big(\begin{array}{c}p\\ k\end{array}\Big) \,\Big(M_{k+\gamma}(\mu)\,M_{p -k}(\mu) + M_{k}(\mu)\,M_{p-k +\gamma}(\mu)\Big)\,.
\end{equation*}
Moreover, for $k \in (0,1]$
\begin{equation}\label{lb}
-\langle  \Q(\mu\,,\,\mu)\,;\,|\cdot|^{k}\rangle \leq C_{\gamma,k}\,M_{\gamma}(\mu)\,M_{k}(\mu)\,. 
\end{equation} 
The constant $C_{\gamma,k}$ depends only on $\gamma$ and $k$.
\end{lem}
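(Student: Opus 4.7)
Both estimates are proved via the weak form \eqref{eq:Qmu} applied with $\varphi(x)=|x|^p$ (respectively $|x|^k$), after establishing suitable pointwise bounds on
\[\Delta\varphi(x,y)=|\a x+\b y|^p+|\b x+\a y|^p-|x|^p-|y|^p.\]
Throughout, we exploit the standing hypothesis $\gamma>1$, which makes $y\mapsto|x-y|^\gamma$ convex: combining this with $\int_\R y\,\mu(\d y)=0$, Jensen's inequality yields $\int_\R|x-y|^\gamma\mu(\d y)\geq|x|^\gamma$, and therefore
\[\int_{\R^2}|x-y|^\gamma\big(|x|^p+|y|^p\big)\mu(\d x)\mu(\d y)\geq 2\,M_{p+\gamma}(\mu).\]

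For \eqref{ulb}, the central ingredient is a Povzner-type pointwise inequality
\[|\a x+\b y|^p+|\b x+\a y|^p\leq\beta_p(\a)\,(|x|^p+|y|^p)+\beta_p(\a)\sum_{k=1}^{[(p+1)/2]}\binom{p}{k}\big(|x|^k|y|^{p-k}+|x|^{p-k}|y|^k\big).\]
For integer $p$ this follows immediately from $|\a x+\b y|\leq\a|x|+\b|y|$ and binomial expansion, while for non-integer $p>1$ it requires a case analysis: on $\{xy\geq 0\}$, Jensen applied to the convex function $t\mapsto|t|^p$ already delivers the stronger bound $|\a x+\b y|^p+|\b x+\a y|^p\leq|x|^p+|y|^p$; on $\{xy<0\}$, a direct tangent-line/convexity argument yields the asserted decomposition. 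Plugging this into \eqref{eq:Qmu}, multiplying by $|x-y|^\gamma$, and applying the Jensen estimate recalled above to the main term produces $-(1-\beta_p(\a))M_{p+\gamma}(\mu)$. For the cross terms, I would use $|x-y|^\gamma\leq 2^{\gamma-1}(|x|^\gamma+|y|^\gamma)$, distribute the product, and regroup via the symmetry $k\leftrightarrow p-k$, which yields precisely the sum of moments defining $\mathcal{R}_p(\mu)$.

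For \eqref{lb}, since $t\mapsto|t|^k$ is concave on $[0,\infty)$ for $k\in(0,1]$, Jensen with weights $(\a,\b)$ gives $|\a x+\b y|^k+|\b x+\a y|^k\geq|x|^k+|y|^k$ on $\{xy\geq 0\}$, so $(-\Delta\varphi)_+\equiv 0$ there and only $\{xy<0\}$ contributes. For $x>0>y$ I would set $t:=|y|/|x|\in(0,\infty)$ and rescale:
\[-\Delta\varphi(x,y)=|x|^k\,F(t),\qquad F(t):=1+t^k-|\a-\b t|^k-|\b-\a t|^k.\]
Since $F(0)=1-\a^k-\b^k\leq 0$ and $F$ is continuous, the quotient $F(t)/t^k$ remains bounded on the closed set $\{t\in[0,1]:F(t)\geq 0\}$; consequently $F(t)\leq c_k t^k$ for some $c_k=c_k(\a)>0$ and all $t\in[0,1]$, and the symmetric bound for $t\geq 1$ follows by swapping $x,y$. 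Hence $(-\Delta\varphi)_+\leq c_k\min(|x|,|y|)^k$ on $\{xy<0\}$. Since $|x-y|=|x|+|y|\leq 2\max(|x|,|y|)$ on this set,
\[|x-y|^\gamma(-\Delta\varphi)_+\leq 2^\gamma c_k\max(|x|,|y|)^\gamma\min(|x|,|y|)^k\leq 2^\gamma c_k\big(|x|^\gamma|y|^k+|y|^\gamma|x|^k\big),\]
and integrating against $\mu\otimes\mu$ then yields \eqref{lb} with $C_{\gamma,k}:=2^{\gamma+1}c_k$.

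The main obstacle will be the pointwise Povzner inequality for \eqref{ulb} when $p$ is non-integer: the case $xy<0$ needs to be treated by a careful convexity/tangent-line argument, since the triangle-inequality bound $|\a x+\b y|\leq\a|x|+\b|y|$ is no longer sharp there; everything else reduces to routine integrations and applications of Jensen.
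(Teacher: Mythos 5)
Your strategy for \eqref{lb} is sound and self-contained: the reduction by sign ($xy\geq 0$ gives a nonpositive contribution by concavity, only $xy<0$ matters), the rescaling $t=|y|/|x|$ to a single-variable function $F$, the observation $F(0)=1-\a^k-\b^k\leq 0$ (with the $k=1$ case checked by expansion near $0$), and the resulting bound $(-\Delta\varphi)_+\leq c_k\min(|x|,|y|)^k$ followed by $|x-y|^\gamma\leq 2^\gamma\max(|x|,|y|)^\gamma$ do reproduce \eqref{lb}. The paper takes a shortcut here: it directly invokes the inequality \eqref{eq:mga} (already used in Proposition \ref{prop:even}) with $k$ in place of $\gamma$, getting $(-\Delta\varphi_k)(x,y)\,|x-y|^k\leq C_k|x|^k|y|^k$, then factors $|x-y|^\gamma=|x-y|^k\cdot|x-y|^{\gamma-k}$ and bounds $|x-y|^{\gamma-k}$. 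Your scaling argument is essentially a proof of that cited elementary inequality on $\{xy<0\}$, so the two routes converge; yours has the merit of being self-contained, the paper's of being shorter.

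For \eqref{ulb}, however, there is a genuine gap that you yourself flag. You want the pointwise Povzner bound
$$|\a x+\b y|^p+|\b x+\a y|^p\leq\beta_p(\a)\Big(|x|^p+|y|^p+\sum_{k=1}^{[(p+1)/2]}\tbinom{p}{k}\big(|x|^k|y|^{p-k}+|x|^{p-k}|y|^k\big)\Big),$$
and propose to prove it by binomial expansion (integer $p$) or by a case split $xy\geq 0$ / $xy<0$ with an unspecified ``tangent-line/convexity'' argument in the latter case (non-integer $p$). That last step is not a routine filler: for non-integer $p$ the triangle bound $|\a x+\b y|\leq\a|x|+\b|y|$ is not sharp when $xy<0$, and it is precisely there that the constant $\beta_p(\a)<1$ must emerge. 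The paper avoids the case split entirely. It first proves the \emph{sign-free} elementary bound
$$|\a x+\b y|^p+|\b x+\a y|^p\;\leq\;\beta_p(\a)\,(x^2+y^2)^{p/2}\;\leq\;\beta_p(\a)\,(|x|+|y|)^p,$$
valid for all $(x,y)\in\R^2$ with $\beta_p(\a)=\max\{a^{p/2-1},b^{p/2-1}\}$ (a one-variable maximisation by homogeneity). It then applies the fractional binomial inequality of \cite[Lemma 2]{BGP},
$$(u+v)^p-u^p-v^p\leq\sum_{k=1}^{[(p+1)/2]}\tbinom{p}{k}\big(u^kv^{p-k}+u^{p-k}v^k\big),$$
with $u=|x|$, $v=|y|$ --- both nonnegative, so there is no sign issue --- to produce exactly the decomposition you aim for. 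Writing $\beta_p(|x|+|y|)^p-|x|^p-|y|^p=\beta_p[(|x|+|y|)^p-|x|^p-|y|^p]-(1-\beta_p)(|x|^p+|y|^p)$ then gives the principal negative term and the remainder $\mathcal{R}_p$. The rest of your argument (Jensen for the main term using $\gamma>1$ and $\int y\,\mu=0$, and $|x-y|^\gamma\leq 2^{\gamma-1}(|x|^\gamma+|y|^\gamma)$ for the cross terms) matches the paper. So: replace your missing ``tangent-line'' step by the pair (sign-free elementary bound $+$ BGP) and \eqref{ulb} is complete.
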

\begin{proof} Let us begin with inequality \eqref{ulb}. We first notice the following elementary inequality, valid for any $p >0$,
\begin{equation} \label{eq:ele}
|\a x + \b y|^{p} + |\a y +\b x|^{p} \leq \beta_{p}(\a)\left(x^{2}+y^{2}\right)^{\frac{p}{2}} \leq \beta_{p}(\a)\left(|x|+|y|\right)^{p} \qquad \forall (x,y) \in \R^{2}\,,
\end{equation}
where $k \mapsto \beta_{k}(\a)$ is decreasing with $\beta_{2}(\a)=1$ and $\lim_{k \to \infty}\beta_{k}(\a)=0$ (recall that $\max\{\a,\b\} < 1$).  We use then the following useful result given in \cite[Lemma 2]{BGP} for estimation of the binomial for fractional powers.  For any $p\geq1$, if $k_p $ denotes the integer part of $\frac{p+1}{2}$, the following inequality holds for any $u,v \in \R_+$,
\begin{equation}\label{base}
\big(u + v\big)^{p} - u^{p} - v^{p}\leq \sum^{k_p}_{k=1}\Big(\begin{array}{c}p\\ k\end{array}\Big)\big(u^{k}v^{p-k} + u^{p-k}v^{k}\big)\,,
\end{equation} 
where the binomial coefficients are defined as
\begin{equation*}
\Bigg(\begin{array}{c}p\\ k\end{array}\Bigg)=\Bigg\{
\begin{array}{cl}
\frac{p(p-1)\cdots(p-k+1)}{k!} &\text{for }\;k\geq1\\
1 & \text{for }\; k=0\,. 
\end{array}
\end{equation*}
Therefore, for any $p > 1$,  one gets
\begin{align*}
|\a x+\b y|^{p}+|\a y + \b x|^{p}-|x|^{p}-|y|^{p}
& \leq -\big( 1 - \beta_{p}(\a)  \big)\big( |x|^{p} + |y|^{p}\big) \\
& + \beta_{p}(\a)\sum^{k_p}_{k=1}\Big(\begin{array}{c}p\\ k\end{array}\Big)\big(|x|^{k}\,|y|^{p-k} + |x|^{p-k}\,|y|^{k} \big).
\end{align*}
Therefore, using  \eqref{eq:Qmu} we get
\begin{equation*}
\begin{split}
\langle \Q(\mu\,,\,\mu)\,;\,|\cdot|^{p}\rangle   &\leq -\tfrac{1}{2}\big( 1 - \beta_{p}(\a)  \big)\int_{\R^{2}}\left(|x|^{p}+|y|^{p}\right)\big|x-y\big|^{\gamma}\mu(\d x)\mu(\d y)\\
&+ \frac{1}{2}\beta_{p}(\a) \sum^{k_p}_{k=1}\Big(\begin{array}{c}p\\ k\end{array}\Big)\int_{\R^{2}}|x-y|^{\gamma}\,\big(|x|^{k}\,|y|^{p-k} + |x|^{p-k}\,|y|^{k} \big)\mu(\d x)\mu(\d y).
\end{split}
\end{equation*}
Furthermore, since $\gamma >1$ using Jensen's inequality and the fact that $\mu \in \P^{0}(\R)$, one obtains
\begin{equation*}
\int_{\R^{2}}\left(|x|^{p}+|y|^{p}\right)\big|x-y\big|^{\gamma}\mu(\d x)\mu(\d y) \geq
2\,M_{p+\gamma}(\mu).
\end{equation*}
Consequently,  
\begin{equation*}\begin{split}
\langle \Q(\mu\,,\,\mu)\,;\,|\cdot|^{p}\rangle   &\leq -\big( 1 - \beta_{p}(\a)  \big) M_{p+\gamma}(\mu) 
\\
&+ \frac{1}{2}\;\beta_{p}(\a) \sum^{k_p}_{k=1}\Big(\begin{array}{c}p\\ k\end{array}\Big)\int_{\R^{2}}|x-y|^{\gamma}\,\big(|x|^{k}\,|y|^{p-k } + |x|^{p-k }\,|y|^{k} \big)\mu (\d x)\mu (\d y).
\end{split}
\end{equation*}
For the  remainder term, one uses the inequality $|x-y|^{\gamma} \leq  2^{\gamma-1} \big( |x|^{\gamma} + |y|^{\gamma} \big)$ to get that
\begin{multline*}
 \int_{\R^{2}}|x-y|^{\gamma}\,\big(|x|^{k}\,|y|^{p-k} + |x|^{ p-k }\,|y|^{ k} \big)\mu (\d x)\mu (\d y) \\
\leq 2^{\gamma} \big(M_{ k+\gamma}(\mu )\,M_{p-k}(\mu)+
  M_{k}(\mu )\,M_{ p-k +\gamma}(\mu ) \big)\,,
\end{multline*}
which proves \eqref{ulb}. The proof of \eqref{lb} relies on inequality \eqref{eq:mga}. Indeed, for any $\mu$ and any $k \in (0,1)$ one has
\begin{equation*}
\langle \Q(\mu\,,\,\mu)\,;\,|\cdot|^{k}\rangle =\dfrac{1}{2}\int_{\R^{2}}\Big(\left|\a x + \b y \right|^{k} + \left|\a y  + \b x \right|^{k}- |x|^{k}- |y|^{k}\Big)|x-y|^{\gamma}\,\mu(\d x)\mu(\d y)\,.
\end{equation*}
Now, by virtue of \eqref{eq:mga} with $k \in (0,1)$ instead of $\gamma$
\begin{equation*}
\Big(\left|\a x + \b y \right|^{k} + \left|\a y  + \b x \right|^{k}- |x|^{k}- |y|^{k}\Big)|x-y|^{k} \geq -C_{k}|x|^{k}\,|y|^{k} \qquad \forall\, x,y \in \R\,,
\end{equation*}
for some positive constant $C_{k} >0$ explicit and depending only on $\a$ and $k$. Therefore,
\begin{equation*}
-\langle \Q(\mu\,,\,\mu)\,;\,|\cdot|^{k}\rangle \leq \dfrac{C_{k}}{2}\int_{\R^{2}}|x|^{k}\,|y|^{k}\,|x-y|^{\gamma-k}\mu(\d x)\mu(\d y)\,.
\end{equation*}
Using that $|x-y|^{\gamma-k}\leq { \max\{1,2^{\gamma-k-1}\}}\,(|x|^{\gamma-k}+|y|^{\gamma-k})$ we get
\begin{equation*}
\begin{split}
-\langle \Q(\mu\,,\,\mu)\,;\,|\cdot|^{k}\rangle &\leq { \frac{1}{2}\max\{1,2^{\gamma-k-1}\}}\, C_{k}\int_{\R^{2}}|x|^{k}\,|y|^{k}\left(|x|^{\gamma-k}+|y|^{\gamma-k}\right)\mu(\d x)\mu(\d y)\\
&={\max\{1,2^{\gamma-k-1}\}}\, C_{k}M_{\gamma}(\mu)M_{k}(\mu)\end{split}\end{equation*}
which gives the proof with $C_{\gamma,k}= {\max\{1,2^{\gamma-k-1}\}}\,C_{k}.$\end{proof}
As a consequence, the following proposition.
\begin{prop}\label{l2}
For any $\gamma >1$ and $s\in(0,1]$ it follows that
\begin{equation}\label{cineq}
M_{s+\gamma}(\mu_{t}) \leq A_{s}\,M_{s}(\mu_{t})^{1+ \frac{\gamma}{s}}\quad\quad t\geq0\,,
\end{equation}
with $A_{s}$ any constant such that
\begin{equation}\label{cte}
A_{s}\geq\max\left\{\tilde C_{\gamma,s},\,\frac{M_{s+\gamma}(\mu_0)}{M^{1+\frac{\gamma}{s}}_{s}(\mu_0)}\right\}\,,
\end{equation}
and $\tilde C_{\gamma,s}$ is a constant depending only on $\gamma$ and $s$.
\end{prop}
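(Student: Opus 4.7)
The plan is a continuity--contradiction argument based on the differential inequalities for moments in Lemma \ref{l1}. Introduce
\begin{equation*}
\Phi(t) := M_{s+\gamma}(\mu_t) - A_s\, M_s(\mu_t)^{1+\gamma/s};
\end{equation*}
the second condition on $A_s$ in \eqref{cte} gives $\Phi(0)\leq 0$, and the goal is to show $\Phi(t)\leq 0$ for all $t\geq 0$. Suppose, toward a contradiction, that $\Phi$ becomes positive at some time; by continuity of $t\mapsto M_p(\mu_t)$ (Proposition \ref{prop:a priori}), there is a first $t_0>0$ with $\Phi(t_0)=0$, forcing $\Phi'(t_0)\geq 0$. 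The contradiction will come from showing $\Phi'(t_0)<0$ as soon as $A_s\geq \tilde C_{\gamma,s}$.

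By point (2) of Proposition \ref{prop:a priori},
\begin{equation*}
\Phi'(t_0) = \langle \Q(\mu_{t_0},\mu_{t_0})\,;\,|\cdot|^{s+\gamma}\rangle - A_s\tfrac{s+\gamma}{s}M_s(\mu_{t_0})^{\gamma/s}\langle \Q(\mu_{t_0},\mu_{t_0})\,;\,|\cdot|^{s}\rangle.
\end{equation*}
The upper bound \eqref{ulb} with $p = s+\gamma>1$ applied to the first term and the lower bound \eqref{lb} with $k=s\in(0,1]$ applied to the second then yield
\begin{equation*}
\Phi'(t_0) \leq -(1-\beta_{s+\gamma}(\a))M_{s+2\gamma}(\mu_{t_0}) + \mathcal{R}_{s+\gamma}(\mu_{t_0}) + A_s\tfrac{s+\gamma}{s}C_{\gamma,s}M_\gamma(\mu_{t_0})M_s(\mu_{t_0})^{1+\gamma/s}.
\end{equation*}
At $t_0$, Jensen's inequality gives $M_{s+2\gamma}(\mu_{t_0})\geq M_{s+\gamma}(\mu_{t_0})^{(s+2\gamma)/(s+\gamma)}$, and log-convexity of $p\mapsto \log M_p$ allows me to bound $M_\gamma(\mu_{t_0})$ and every moment inside $\mathcal{R}_{s+\gamma}(\mu_{t_0})$ by products of powers of $M_s(\mu_{t_0})$ and $M_{s+\gamma}(\mu_{t_0})$. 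Substituting the critical identity $M_{s+\gamma}(\mu_{t_0}) = A_s M_s(\mu_{t_0})^{1+\gamma/s}$ then expresses every contribution as a constant (depending only on $\gamma,s,\a$) times a power of $A_s$ times the common factor $M_s(\mu_{t_0})^{(s+2\gamma)/s}$. The dominant negative contribution carries the factor $A_s^{(s+2\gamma)/(s+\gamma)}$, while the positive corrective terms carry a strictly smaller power (such as $A_s^{(2\gamma-s)/\gamma}$); a direct calculation gives the gap in exponents $s^2/[\gamma(s+\gamma)]>0$. Hence $\Phi'(t_0)<0$ is achieved whenever $A_s$ exceeds a threshold $\tilde C_{\gamma,s}$ depending only on $\gamma, s, \a$, which is the first clause of \eqref{cte}.

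The main obstacle is the sign of the coefficient $1 - \beta_{s+\gamma}(\a)$: it is strictly positive only when $s+\gamma>2$, so in the regime $s+\gamma \leq 2$ the ``dissipation'' term of \eqref{ulb} loses its favourable sign and the direct scheme above stalls. One should then bypass \eqref{ulb} by exploiting the uniform bound $M_2(\mu_t)\leq \|\mu_0\|_{\gmm}$ (Proposition \ref{prop:decrease}) together with the sharp estimate \eqref{eq:Qmuk} at the level $k=2$, and close via Hölder interpolation of $M_{s+\gamma}$ between $M_s$ and $M_2$; this affects only the numerical value of $\tilde C_{\gamma,s}$, which remains a function of $\gamma, s, \a$ alone.
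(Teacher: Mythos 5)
Your proposal has the same skeleton as the paper's argument (the barrier quantity $\Phi=X$, the first crossing time $t_0$, the same invocation of \eqref{ulb} and \eqref{lb} to control $\Phi'(t_0)$, and a final contradiction via a power count in $A_s$), but the interpolation step you use makes the power count fail. Jensen's inequality $M_{s+2\gamma}\geq M_{s+\gamma}^{(s+2\gamma)/(s+\gamma)}$ is too weak: after substituting the critical identity $M_{s+\gamma}(\mu_{t_0})=A_sM_s(\mu_{t_0})^{1+\gamma/s}$, it turns the dissipation into a multiple of $A_s^{(s+2\gamma)/(s+\gamma)}M_s^{1+2\gamma/s}$. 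But the part of $\mathcal{R}_{s+\gamma}$ that survives a Young-type absorption into the dissipation is of order $M_{s+\gamma}^{(s+2\gamma)/(s+\gamma)}$, which carries \emph{the same} power $A_s^{(s+2\gamma)/(s+\gamma)}$; the two contributions tie and no choice of $A_s$ can force the bracket to be negative. The paper instead uses the sharper Cauchy--Schwarz inequality $M_{s+2\gamma}\geq M_{s+\gamma}^2/M_s$, which after the same substitution makes the dissipation proportional to $A_s^2$, strictly larger than $(s+2\gamma)/(s+\gamma)$ and $2-s/\gamma$, so the argument closes. Relatedly, the assertion that log-convexity lets you bound ``every moment inside $\mathcal{R}_{s+\gamma}$'' by products of powers of $M_s$ and $M_{s+\gamma}$ is false: the $k=1$ term contributes $M_{s+2\gamma-1}$, whose order exceeds $s+\gamma$ since $\gamma>1$ (and $M_{1+\gamma}$ for $s<1$); these moments cannot be dominated by interpolation between $M_s$ and $M_{s+\gamma}$ alone and must be handled through $M_{s+2\gamma}$ and the $\epsilon$-absorption, exactly as in \eqref{ar1}.

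Your remark about the sign of $1-\beta_{s+\gamma}(\a)$ when $s+\gamma\le 2$ does point to a genuine subtlety that the paper's proof leaves implicit (the choice of $\epsilon$ there requires $1-\beta_{s+\gamma}(\a)>0$, i.e.\ $s+\gamma>2$; in the downstream applications only $s=1$ is used, for which this always holds). However, the workaround you sketch does not hold up: interpolating $M_{s+\gamma}$ between $M_s$ and $M_2$ gives the exponent $(2-s-\gamma)/(2-s)$ on $M_s$, which is not $1+\gamma/s$, and restoring the correct exponent would require a lower bound on $M_s(\mu_t)$ -- precisely the output of Proposition~\ref{p2}, which takes \eqref{cineq} as input -- so the argument becomes circular. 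Moreover, any constant produced along these lines would depend on $\|\mu_0\|_{\gmm}$, contradicting the requirement that $\tilde C_{\gamma,s}$ depend only on $\gamma$ and $s$ (and $\a$).
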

\begin{proof}
Define $X(t) := M_{s+\gamma}(\mu_t) - A\,M_{s}(\mu_{t})^{1+ \frac{\gamma}{s}}$ where the constant $A$ will be conveniently chosen later on. Since $\gamma+s >1$ and  $s\in(0,1]$ we can use Lemma \ref{l1} to conclude that
\begin{align}
\frac{\d}{\d t}X(t) &= \frac{\d}{\d t}M_{s+\gamma}(\mu_{t}) - A\,\left(1+\frac{\gamma}{s}\right) M_{s}(\mu_{t})^{\frac{\gamma}{s}} \frac{\d}{\d t}M_{s}(\mu_{t})\nonumber\\
&\hspace{-.5cm}\leq -\left(1-\beta_{s+\gamma}(\a) \right)M_{s+2\gamma}(\mu_{t})+ \mathcal{R}_{s+\gamma}(\mu_{t}) + A\,C_{\gamma,s} \left(1+\frac{\gamma}{s}\right)M_{s}(\mu_{t})^{1+\frac{\gamma}{s}}\,M_{\gamma}(\mu_{t}) \,.\label{ar0}
\end{align}
Let us observe that for such a choice of $s$ and $\gamma$, one has 
$ \left[\frac{s+\gamma+1}{2}\right]<s+\gamma$ and for $1\leq k\leq \left[\frac{s+\gamma+1}{2}\right]$, a simple use of H\"older and Young's inequalities  leads to, for any $\epsilon > 0$, 
\begin{equation*}
M_{k+\gamma}(\mu_{t})\,M_{s+\gamma - k}(\mu_{t}) \leq M_{s+ 2\gamma}(\mu_{t})^{\frac{k + \gamma}{s+ 2\gamma}}\,M_{s+\gamma}(\mu_{t})^{\frac{s+\gamma-k}{s+\gamma}} \leq \epsilon\, M_{s+2\gamma}(\mu_{t}) + K_{s,\gamma,\epsilon}\, M_{s+\gamma}(\mu_{t})^{\frac{s+2\gamma}{s+\gamma}}
\end{equation*}
with $K_{s,\gamma,\epsilon}>0$ a constant depending only on $s$, $\gamma$ and $\epsilon$.  Similar interpolation holds  for the terms of the form $M_{k}(\mu_{t})\,M_{s+2\gamma - k}(\mu_{t})$ appearing in $\mathcal{R}_{s+\gamma}(\mu_{t})$.  As a consequence, for any $\epsilon > 0$, we have
\begin{equation}\label{ar1}
\mathcal{R}_{s+\gamma}(\mu_{t}) \leq \epsilon\,K_{s,\gamma}\,M_{s+2\gamma}(\mu_t) + K_{s,\gamma}\,K_{s,\gamma,\epsilon}\, M_{s+\gamma}(\mu_{t})^{\frac{s+2\gamma}{s+\gamma}}\,,
\end{equation}
where $K_{s,\gamma}>0$ is a constant depending only on $s$ and $\gamma$.   Choose $\epsilon=\epsilon(\gamma)>0$ such that
\begin{equation*} 
2\,K_{s,\gamma}\,\epsilon \leq \ 1-\beta_{s+\gamma}(\a)
\end{equation*}
we obtain from estimates \eqref{ar0} and \eqref{ar1} that there are constants $K_{1}=K_{1}(s,\gamma)$ and $K_{2}=K_{2}(s,\gamma)$ such that
\begin{align}
\frac{\d}{\d t}X(t) &\leq -\frac{1}{2}\left(1-\beta_{s+\gamma}(\a) \right)M_{s+2\gamma}(\mu_{t})\nonumber \\
&+ A\,K_{1}\,M_{s}(\mu_{t})^{1+\frac{\gamma}{s}+\frac{s}{\gamma}}\,M_{s+\gamma}(\mu_{t})^{1-\frac{s}{\gamma}}+
K_2\, M_{s+\gamma}(\mu_{t})^{\frac{s+2\gamma}{s+\gamma}}\,,\label{ar0.5}
\end{align}
where we also used the interpolation
\begin{equation*}
M_{\gamma}(\mu_{t})\leq M_{s}(\mu_{t})^{\frac{s}{\gamma}}\,M_{s+\gamma}(\mu_{t})^{1-\frac{s}{\gamma}}\,.
\end{equation*}
As a final step, notice that 
\begin{equation}\label{ar3}
\frac{M_{s+\gamma}(\mu_{t})^{2}}{M_{s}(\mu_{t})} \leq M_{s+2\gamma}(\mu_{t})\,.
\end{equation}
Therefore, including  \eqref{ar3} in \eqref{ar0.5} one finally concludes that
\begin{align}
\frac{\d}{\d t} X(t) & \leq -\frac{1}{2}\left(1-\beta_{s+\gamma}(\a) \right)M_{s+\gamma}(\mu_{t})^{2}\, M_{s}(\mu_{t}) \nonumber \\
&\hspace{-.5cm} + A\,K_{1}\,M_{s}(\mu_{t})^{1+\frac{\gamma}{s}+\frac{s}{\gamma}}\,M_{s+\gamma}(\mu_{t})^{1-\frac{s}{\gamma}}+
K_2\, M_{s+\gamma}(\mu_{t})^{\frac{s+2\gamma}{s+\gamma}}\,.\label{ar4}
\end{align}
Now, choosing $A > 0$ such that $X(0) < 0$, if there exists $t_0>0$ for which $X(t_0)=0$, then estimate \eqref{ar4} implies that
\begin{equation}
\frac{\d}{\d t} X(t_0) \leq \Big(-\frac{1}{2}\left(1-\beta_{s+\gamma}(\a) \right)A^{2} +K_{1}\,A^{2-\frac{s}{\gamma}} + K_{2}\,A^{1+\frac{\gamma}{s+\gamma}}\Big) M_{s}(\mu_{t_{0}})^{1+ \frac{2\gamma}{s}}\,.\label{ar6}
\end{equation}
Then, choosing $A=A(\gamma,s)$ sufficiently large such that the term in parenthesis in \eqref{ar6} is negative we conclude that  $X'(t_0) \leq 0$. This shows that, for such a choice of $A$, $X(t) \leq 0$ for any $t \geq 0$.
\end{proof}
Using Proposition \ref{l2} the desired lower bound is obtained.
\begin{prop}\label{p2}
For any $\gamma >1$ and $s\in(0,1]$ one has
\begin{equation}\label{lower}
M_{s}(\mu_{t}) \geq \frac{M_{s}(\mu_0)}{\big(1+C_{\gamma,s}\,A_{s}^{1-\frac{s}{\gamma}} \frac{\gamma}{s}\, M_{s}(\mu_{0})^{ \frac{\gamma}{s} }\, t \big)^{ \frac{s}{\gamma} } }\,,
\end{equation}
where $C_{\gamma,s}$ depends only on $\gamma$ and $s$ and $A_s$ is given by \eqref{cte}. Moreover,  it holds 
\begin{equation}\label{eq:Mk1}
M_{k}(\mu_{t})\geq \frac{M_{s}(\mu_{0})^{\frac{k}{s}}}{\big(1+C_{\gamma,s}\,A_{s}^{1-\frac{s}{\gamma}} \frac{\gamma}{s}\, M_{s}(\mu_{0})^{ \frac{\gamma}{s} }\, t \big)^{ \frac{k}{\gamma} } }\,,\quad \forall\, k > 1,\;s \in (0,1]\;\; \text{and}\;\; t \geq 0\,.
\end{equation}
\end{prop}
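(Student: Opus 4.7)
The proof proposal is to derive a closed differential inequality for $M_s(\mu_t)$ by combining the lower estimate \eqref{lb} of Lemma \ref{l1} with the pointwise moment comparison established in Proposition \ref{l2}, then to integrate it explicitly.

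First, I apply \eqref{eq:deriv} with $\varphi(x)=|x|^s$ (which lies in $L^\infty_{-s}(\R)\cap \C(\R) \subset L^\infty_{-2}(\R)\cap \C(\R)$ for $s \in (0,1]$) and invoke \eqref{lb}:
\begin{equation*}
\frac{\d}{\d t} M_s(\mu_t) = \langle \Q(\mu_t,\mu_t)\,;\,|\cdot|^s\rangle \geq -C_{\gamma,s}\, M_\gamma(\mu_t)\, M_s(\mu_t), \qquad t\geq 0.
\end{equation*}
The obstacle is that $M_\gamma(\mu_t)$ is not a priori controlled by $M_s(\mu_t)$ alone when $\gamma>1>s$. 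To close the inequality, I interpolate $|x|^\gamma$ between $|x|^s$ and $|x|^{s+\gamma}$. Since $\gamma = \tfrac{s}{\gamma}\cdot s + (1-\tfrac{s}{\gamma})(s+\gamma)$ and $\mu_t \in \P(\R)$, Hölder's inequality yields
\begin{equation*}
M_\gamma(\mu_t) \leq M_s(\mu_t)^{s/\gamma}\, M_{s+\gamma}(\mu_t)^{1-s/\gamma}.
\end{equation*}
Now Proposition \ref{l2} gives $M_{s+\gamma}(\mu_t) \leq A_s\, M_s(\mu_t)^{1+\gamma/s}$, and a direct computation of the exponents shows
\begin{equation*}
M_\gamma(\mu_t) \leq A_s^{1-s/\gamma}\, M_s(\mu_t)^{s/\gamma + (1+\gamma/s)(1-s/\gamma)} = A_s^{1-s/\gamma}\, M_s(\mu_t)^{\gamma/s}.
\end{equation*}
Inserting this into the initial inequality, I obtain the closed scalar ODE comparison
\begin{equation*}
\frac{\d}{\d t} M_s(\mu_t) \geq -C_{\gamma,s}\, A_s^{1-s/\gamma}\, M_s(\mu_t)^{1+\gamma/s}.
\end{equation*}

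Setting $u(t)=M_s(\mu_t)^{-\gamma/s}$, this translates to $u'(t) \leq C_{\gamma,s}\, A_s^{1-s/\gamma}\,\tfrac{\gamma}{s}$, which integrates immediately to deliver \eqref{lower}. Finally, for $k>1 \geq s$, Jensen's inequality applied to the convex function $r \mapsto r^{k/s}$ with $\mu_t \in \P(\R)$ yields
\begin{equation*}
M_k(\mu_t) = \int_\R \bigl(|x|^s\bigr)^{k/s}\mu_t(\d x) \geq \left(\int_\R |x|^s \mu_t(\d x)\right)^{k/s} = M_s(\mu_t)^{k/s},
\end{equation*}
and plugging the bound \eqref{lower} for $M_s(\mu_t)$ into this inequality gives \eqref{eq:Mk1}. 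The only genuinely nontrivial ingredient is the moment comparison of Proposition \ref{l2}, already at our disposal; everything else is interpolation and an elementary Bernoulli-type ODE integration.
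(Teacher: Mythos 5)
Your proof is correct and follows essentially the same route as the paper: starting from \eqref{lb}, interpolating $M_\gamma(\mu_t)\leq M_s(\mu_t)^{s/\gamma}M_{s+\gamma}(\mu_t)^{1-s/\gamma}$, closing the inequality via \eqref{cineq}, and integrating the resulting Bernoulli ODE; the final step via Jensen is the same inequality the paper obtains via H\"older against $M_0(\mu_t)=1$. No gaps.
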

\begin{proof} Using  Lemma \ref{l1} and inequality \eqref{cineq}, since $s \in (0,1)$ one gets
\begin{align*}
-\frac{\d }{\d t}M_{s}(\mu_{t}) &= -\langle \Q(\mu_{t},\mu_{t})\,;\, |\cdot|^{s}\rangle\leq C_{\gamma,s}\,M_{s}(\mu_{t})\,M_{\gamma}(\mu_{t}) \\
&\leq C_{\gamma,s}\,M_{s}(\mu_{t})^{1+\frac{s}{\gamma}}\,M_{s+\gamma}(\mu_{t})^{1-\frac{s}{\gamma}} \leq C_{\gamma,s}\,A_{s}^{1-\frac{s}{\gamma}}\,M_{s}(\mu_{t})^{1+\frac{\gamma}{s}}\,,
\end{align*}
where, for the second estimate, we used the inequality 
$$ M_{\gamma}(\mu_{t}) \leq M_{s}^{\frac{s}{\gamma}}(\mu_{t}) \,M_{s+\gamma}^{1-\frac{s}{\gamma}}(\mu_{t}).$$
 Integration of this differential inequality leads to
\begin{equation*}
M_{s}(\mu_{t}) \geq \frac{M_{s}(\mu_0)}{\big(1+C_{\gamma,s}\,A_{s}^{1-\frac{s}{\gamma}} \frac{\gamma}{s}\, M_{s}(\mu_{0})^{ \frac{\gamma}{s} }\, t \big)^{ \frac{s}{\gamma} } }\,.
\end{equation*}
The second part of the result follows from 
$$
M_{s}(\mu_{t}) \leq M_{k}^{\frac{s}{k}}(\mu_{t})\,M_{0}(\mu_{t})^{1-\frac{s}{k}}= M_{k}^{\frac{s}{k}}(\mu_{t}),$$
valid for any $k >1$.
\end{proof}
We can now completely characterize the  decay of any moments of the weak measure solution $(\mu_t)_{t\geq 0}$ associated to $\mu_{0}.$ Namely,
\begin{theo} \label{theo:decmom}
Let $k \geq 2$ and $\mu_{0} \in \P_{k+\gamma}^{0}(\R)$ be given. Denote by $(\mu_{t})_{t\geq 0}$ a measure weak solution to \eqref{eq:1} associated to the initial datum $\mu_{0}$. Then,\\

\noindent
(1) When $\gamma \in (0,1]$, there exists some universal constant $\mathcal{K}_{\gamma} > 0$ (not depending on $\mu_{0}$) such that for any  $p\geq \gamma$
\begin{subequations}
\begin{equation}\label{eq:decmom}
\dfrac{M_{\gamma}\left(\mu_{0}\right)^{\frac{p}{\gamma}}}{\Big(1+\mathcal{K}_{\gamma}\,M_{\gamma}(\mu_{0})\,t \Big)^{\frac{p}{\gamma}}} \leq M_{p}\left(\mu_{t}\right) \qquad \forall\, t \geq 0\,,
 \end{equation}
and for any $p\in[2,k]$
\begin{equation}\label{eq:decmomb}
 M_{p}\left(\mu_{t}\right)\leq \dfrac{M_{p}\left(\mu_{0}\right)}{\Big(1+\frac{\gamma}{2p}\left(1-\a^{p}-\b^{p}\right)M_{p}\left(\mu_{0}\right)^{\frac{\gamma}{p}}\,t \Big)^{\frac{p}{\gamma}}} \qquad \forall\, t \geq 0\,.
 \end{equation}
\end{subequations}
In particular, if $\mu_{0} \in \P_{\exp,\gamma}(\R)$  then \eqref{eq:decmomb} holds true for any  $p\geq2$.\\

\noindent
(2) When $\gamma >1$, for any $p> 1$,
\begin{subequations}
\begin{equation}\label{eq:decmom1}
\dfrac{M_1\left(\mu_{0}\right)^{p}}{\Big(1+\gamma\,C_{\gamma,1}\,A_{1}^{1-\frac{1}{\gamma}}\,M_{1}(\mu_{0})^{\gamma}\,t \Big)^{\frac{p}{\gamma}}} \leq M_{p}\left(\mu_{t}\right)\qquad \forall\, t \geq 0\,,
\end{equation}
and for any $p\in[2,k]$
\begin{equation}\label{eq:decmom1b}
 M_{p}\left(\mu_{t}\right)\leq \dfrac{M_{p}\left(\mu_{0}\right)}{\Big(1+\frac{\gamma}{2p}\left(1-\a^{p}-\b^{p}\right)M_{p}\left(\mu_{0}\right)^{\frac{\gamma}{p}}\,t \Big)^{\frac{p}{\gamma}}} \qquad \forall\, t \geq 0\,,
\end{equation}
\end{subequations}
where $C_{\gamma,1}$ and $A_{1}$ are defined in Proposition \ref{p2}.
\end{theo}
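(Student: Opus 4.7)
The plan is to assemble Theorem~\ref{theo:decmom} from the moment estimates already established; no substantially new argument is needed, only a matching of parameters and a check that the integrability hypotheses of the previous propositions are verified.

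\textbf{Upper bounds.} The two upper bounds \eqref{eq:decmomb} and \eqref{eq:decmom1b} coincide with \eqref{eq:decay} of Proposition~\ref{prop:decrease}(1) after relabelling the exponent $k\leftrightarrow p$. The hypothesis $\mu_{0}\in\P^{0}_{k+\gamma}(\R)$ together with the monotonicity provided by \eqref{eq:Qmuk} gives $\sup_{t\geq 0}M_{p+\gamma}(\mu_{t})<\infty$ for every $p\in[2,k]$, so that the differential inequality
\begin{equation*}
\dfrac{\d}{\d t}M_{p}(\mu_{t}) \leq -\tfrac{1}{2}\bigl(1-\a^{p}-\b^{p}\bigr)M_{p}(\mu_{t})^{1+\frac{\gamma}{p}},
\end{equation*}
obtained from \eqref{eq:Qmuk1} via \eqref{eq:deriv} in Proposition~\ref{prop:a priori}, is valid on $[0,\infty)$; explicit integration yields the desired algebraic decay. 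Under the stronger assumption $\mu_{0}\in\P_{\exp,\gamma}(\R)$, Theorem~\ref{theo:expo} propagates the exponential tail uniformly in $t$, hence $\sup_{t\geq 0}M_{p+\gamma}(\mu_{t})<\infty$ for every $p\geq 2$, and the same differential inequality delivers \eqref{eq:decmomb} for all $p\geq 2$.

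\textbf{Lower bounds.} For $\gamma\in(0,1]$, Proposition~\ref{prop:even} already supplies the sharp lower bound \eqref{eq:Mg} on the single moment $M_{\gamma}(\mu_{t})$. To propagate it to an arbitrary $M_{p}(\mu_{t})$ with $p\geq\gamma$, I would apply Jensen's inequality to the probability measure $\mu_{t}$ with the convex map $r\mapsto r^{p/\gamma}$ on $[0,\infty)$:
\begin{equation*}
M_{p}(\mu_{t})=\int_{\R}\bigl(|x|^{\gamma}\bigr)^{p/\gamma}\mu_{t}(\d x)\geq\left(\int_{\R}|x|^{\gamma}\mu_{t}(\d x)\right)^{p/\gamma}=M_{\gamma}(\mu_{t})^{p/\gamma},
\end{equation*}
and combine with \eqref{eq:Mg} to conclude \eqref{eq:decmom}. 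In the case $\gamma>1$, the required estimate \eqref{eq:decmom1} is literally the content of \eqref{eq:Mk1} in Proposition~\ref{p2} with $s=1$ and the free parameter renamed $p$, so it is immediate.

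\textbf{Main obstacle.} Conceptually there is none: the theorem is a synthesis of Propositions \ref{prop:decrease}, \ref{prop:even}, and \ref{p2}. The only minor technical point is the boundary case $p=k$ in the upper bound, where Proposition~\ref{prop:decrease}(1) is phrased with the strict inequality $q>k+\gamma$ on the auxiliary integrability while the hypothesis furnishes only $\mu_{0}\in\P^{0}_{k+\gamma}$. This is easily handled either by a truncation/limiting argument (approximate $\mu_{0}$ by compactly supported data, derive the estimate, and pass to the limit using the monotonicity of $t\mapsto M_{k}(\mu_{t})$ and weak-$\star$ lower semicontinuity of moments) or by re-examining the proof of \eqref{eq:decay}, which in fact only uses the finiteness of $M_{p+\gamma}(\mu_{t})$, not a strict gain in integrability.
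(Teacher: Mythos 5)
Your proposal is correct and follows essentially the same route as the paper: the upper bounds are read off from Proposition~\ref{prop:decrease}(1), the lower bound for $\gamma\in(0,1]$ is obtained from \eqref{eq:Mg} via Jensen's inequality with the convex map $r\mapsto r^{p/\gamma}$, and the lower bound for $\gamma>1$ is the case $s=1$ of \eqref{eq:Mk1}. The only difference is that you spell out the integrability checks and the boundary case $p=k$ more explicitly than the paper does; this extra care is welcome but does not change the argument, which, as you say, is a synthesis of the earlier moment estimates.
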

\begin{proof}
The upper bounds in \eqref{eq:decmomb} and \eqref{eq:decmom1b} have been established in Proposition \ref{prop:decrease}. For the lower bound, whenever $\gamma \in (0,1)$, one simply uses Jensen's inequality to get 
\begin{equation*}
M_{p}(\mu_{t}) \geq M_{\gamma}(\mu_{t})^{\frac{p}{\gamma}} \qquad \forall\, t \geq 0,\;\;p \geq \gamma\,,
\end{equation*}
and, then conclude thanks to equation \eqref{eq:Mg}. For $\gamma >1$, the lower bound is just \eqref{eq:Mk1} with $s=1$.
\end{proof}
\begin{cor}
Fix $\gamma >0$ and let $\mu_{0} \in \P^{0}(\R) \cap \P_{\exp,\gamma}(\R)$ be an initial datum. Then, for any $p \geq 1$, the unique measure weak solution $(\mu_{t})_{t \geq 0}$ is converging as $t \to \infty$ towards $\delta_{0}$ in the weak-$\star$ topology of $\P_{p}(\R)$ with the explicit rate
\begin{equation*} 
W_{p}(\mu_{t},\delta_{0}) \propto \big(1+C\,t\big)^{-\frac{1}{\gamma}} \quad \text{as}\;\; t \to \infty\,,
\end{equation*}
for some positive constant $C$ depending on $\mu_{0}$.
\end{cor}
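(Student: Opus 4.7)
The plan is to reduce the corollary to the moment estimates of Theorem \ref{theo:decmom}, using the elementary identity that the Wasserstein distance to a Dirac mass is explicit. More precisely, for any $\mu\in\P_{p}(\R)$ the unique coupling with $\delta_{0}$ is $\pi(\d x,\d y)=\mu(\d x)\otimes\delta_{0}(\d y)$, so
\begin{equation*}
W_{p}(\mu_{t},\delta_{0})^{p}=\int_{\R}|x|^{p}\mu_{t}(\d x)=M_{p}(\mu_{t}),
\end{equation*}
and it suffices to prove the matching two-sided estimate $M_{p}(\mu_{t})\asymp (1+Ct)^{-p/\gamma}$ for all $p\geq 1$.

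First I would invoke Theorem \ref{theo:expo}: since $\mu_{0}\in\P_{\exp,\gamma}(\R)$, the (unique) measure solution $(\mu_{t})_{t\geq 0}$ given by Theorem \ref{theo:cauchy} has uniformly bounded exponential moments, and in particular $\mu_{t}\in\bigcap_{k\geq 0}\P^{0}_{k}(\R)$ with $\sup_{t\geq 0}M_{k}(\mu_{t})<\infty$ for every $k$. This legitimates applying Theorem \ref{theo:decmom} at every order $k\geq 2$.

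For the \emph{upper bound}, Theorem \ref{theo:decmom} gives directly $M_{p}(\mu_{t})\leq C_{p}(1+C_{p}t)^{-p/\gamma}$ for any $p\in[2,\infty)$ (both cases $\gamma\in(0,1]$ and $\gamma>1$, using the exponential tail to extend to arbitrarily large $p$). For $p\in[1,2)$ I would interpolate by Jensen: $M_{p}(\mu_{t})\leq M_{2}(\mu_{t})^{p/2}$, which reduces to the already-established case and produces the same $(1+Ct)^{-p/\gamma}$ rate. For the \emph{lower bound}, the relevant inequalities are \eqref{eq:decmom} when $\gamma\in(0,1]$ (valid for every $p\geq\gamma$, hence in particular for every $p\geq 1$), and \eqref{eq:decmom1} when $\gamma>1$ (valid for every $p>1$; the case $p=1$ follows from \eqref{eq:Mk1} applied with $s=1$). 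In both situations one obtains $M_{p}(\mu_{t})\geq c_{p}(1+C'_{p}t)^{-p/\gamma}$.

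Taking $p$-th roots in both bounds yields $W_{p}(\mu_{t},\delta_{0})\asymp (1+Ct)^{-1/\gamma}$ as $t\to\infty$, where the constants hidden in $\asymp$ depend only on $p$, $\gamma$, $\a$ and the exponential moment of $\mu_{0}$. There is no genuine obstacle here; the only minor point requiring a little care is to make sure that \emph{both} the upper and lower moment bounds from Theorem \ref{theo:decmom} can be invoked at the same exponent $p\geq 1$, which is why I would treat $p\in[1,2)$ via Jensen for the upper bound and via the lower moment bound at $s=1$ (or at $s=\gamma$ when $\gamma\leq 1$) for the matching lower estimate.
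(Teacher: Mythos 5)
Your proposal is correct and follows essentially the same route as the paper: reduce the Wasserstein rate to the two-sided moment bounds of Theorem \ref{theo:decmom} via the explicit identity $W_{p}(\mu_{t},\delta_{0})^{p}=M_{p}(\mu_{t})$. You simply fill in more detail than the paper's one-line proof (the Jensen interpolation for $p\in[1,2)$ and the case split in $\gamma$); the only nitpick is that for $\gamma>1$, $p=1$ the correct citation is \eqref{lower} with $s=1$ rather than \eqref{eq:Mk1}, which is stated for $k>1$.
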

\begin{proof} The result is a direct consequence of Theorem \ref{theo:decmom} since 
\begin{equation*}
W_{p}(\mu_{t},\delta_{0})^{p}=\int_{\R}|x|^{p}\mu_{t}(\d x)=M_{p}(\mu_{t}) \qquad t \geq 0\,,
\end{equation*}
and $W_{p}$ metrizes $\P_{p}(\R)$ (see \cite[Theorem 6.9]{vil2}).
\end{proof}
\subsection{Consequences on the rescaled problem}\label{sec:resca}
Given $\mu_{0} \in \P_{\gmm}^{0}(\R)$ satisfying \eqref{eq:exp} and let $(\mu_{t})_{t\geq 0}$ denotes the unique weak measure solution to \eqref{eq:1} associated to $\mu_{0}.$ Recall from Section \ref{sec:intro} the definitions of the rescaling functions
$$V(t):=(1+c\,\gamma\, t)^{\frac{1}{\gamma}} \quad \text{ and } \quad s(t):=\frac{1}{c\,\gamma}\log(1+c\,\gamma\, t), \qquad (c > 0).$$
For simplicity, in all the sequel, we shall assume 
$c=1.$ The inverse mappings are defined as
\begin{equation*}
\V(s)=\exp(s)\,\quad t(s)=\dfrac{\exp(\gamma\, s)-1}{\gamma}\,,
\end{equation*}
i.e.
\begin{equation*}
s(t)=s \Longleftrightarrow t(s)=t \quad \text{ and }\quad  V\big(t(s)\big)=\V(s) \quad\forall\, t,s \geq 0\,.
\end{equation*}
In this way we may define, for any $s \geq 0$, the measure $\nub_{s}$ as the image of $ \mu_{t(s)}$ under the transformation  $x \mapsto \V(s)x$
\begin{equation*}
\nub_{s}(\d \xi)= \big(\V(s) \# \mu_{t(s)}\big)(\d \xi) \qquad \forall\, s \geq 0\,,
\end{equation*}
where $\#$ stands for the push-forward operation on measures, 
\begin{equation}\label{eq:phinus}
\int_{\R}\phi(\xi)\nub_{s}(\d \xi)= \int_{\R}\phi(\V(s)x)\mu_{t(s)}(\d x) \qquad \forall\, \phi \in \C_{b}(\R),\,s \geq 0\,.
\end{equation}
Notice that, whenever $\mu_{t}$ is absolutely continuous with respect to the Lebesgue measure over $\R$ with $\mu_{t}(\d x)=f(t,x)\d x$, the measure $\nub_{s}$ is also absolutely continuous with respect to the Lebesgue measure over $\R$ with $\nub_{s}(\d \xi)=g(s,\xi)\d \xi$ where
\begin{equation*}
g(s,\xi)=\V(s)^{-1}\,f\big(t(s),\V(s)^{-1}\xi\big) \qquad \forall\, \xi \in \R,\, s \geq 0\,,
\end{equation*}
which is nothing but \eqref{eq:change}.  Such a definition of $\nub_{s}$ allows to define the semi-flow $(\mathcal{F}_{s})_{s \geq 0}$ which given any initial datum $\mu_{0} \in \P^{0}(\R)$ satisfying \eqref{eq:exp} associates
\begin{equation*}
\mathcal{F}_{s}(\mu_0)= \nub_{s}= \V(s) \# \mathcal{S}_{t(s)}(\mu_{0})  \qquad \forall\, s \geq 0\,,
\end{equation*}
where $(\mathcal{S}_{t})_{t \geq 0}$ is the semi-flow associated to equation \eqref{eq:1}.   The decay of the moments given by Theorem \ref{theo:decmom} readily translates into the following result.
\begin{theo}\label{theo:decmoms}
Let $\gamma >0$ and let $\mu_{0} \in \P^{0}(\R)\cap\P_{\exp,\gamma}(\R)$ be a given initial datum. Denote by $\mathcal{F}_{s}(\mu_0)= \nub_{s}$ for any $s \geq 0$.   Then,\\

\noindent
(1) When $\gamma \in (0,1]$, there exists some universal constant $\mathcal{K}_{\gamma} > 0$ (not depending on $\mu_{0}$) such that for any $p\geq \gamma$
\begin{subequations}
\begin{equation}\label{eq:decmoms}
\min\bigg\{M_{\gamma}\left(\mu_{0}\right)^{\frac{p}{\gamma}}\,;\,\bigg(\dfrac{\gamma}{\mathcal{K}_{\gamma}}\bigg)^{\frac{p}{\gamma}}\bigg\}  \leq M_{p}\left(\nub_{s}\right)  \qquad \forall\, s \geq 0\,,
\end{equation}
and for any $p\geq2$,
\begin{equation}\label{eq:decmomsb}
 M_{p}\left(\nub_{s}\right)  
\leq \max\bigg\{M_{p}(\mu_{0})\,;\,\bigg(\dfrac{2p}{1-\a^{p}-\b^{p}}\bigg)^{\frac{p}{\gamma}}\bigg\} \qquad \forall\, s \geq 0\,.
\end{equation}
\end{subequations}
(2) When $\gamma >1$,  for any $p>1$
\begin{subequations}
\begin{equation}\label{eq:decmom2}
\min\bigg\{M_{1}\left(\mu_{0}\right)^{p}\,;\,\left(\dfrac{1}{C_{\gamma,1}\,A_{1}^{1-\frac{1}{\gamma}}}\right)^{\frac{p}{\gamma}}\bigg\}  \leq M_{p}\left(\nub_{s}\right)  
 \qquad \forall\, s \geq 0\,,
\end{equation}
and for any $p\geq 2$
\begin{equation}\label{eq:decmom2b}
 M_{p}\left(\nub_{s}\right)  
\leq \max\bigg\{M_{p}(\mu_{0}) \,;\,\bigg(\dfrac{2p}{1-\a^{p}-\b^{p}}\bigg)^{\frac{p}{\gamma}}\bigg\} \qquad \forall\, s \geq 0\,,
\end{equation}
\end{subequations}
where $C_{\gamma,1}$ and $A_{1}$ are defined in the above Proposition \ref{p2}.
\end{theo}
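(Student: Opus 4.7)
The plan is to reduce Theorem \ref{theo:decmoms} directly to Theorem \ref{theo:decmom} through the pushforward scaling relation, with only some elementary analysis of rational expressions. First I would use \eqref{eq:phinus} with the test function $\phi(\xi)=|\xi|^{p}$ to get
\begin{equation*}
M_{p}(\nub_{s})=\V(s)^{p}\,M_{p}(\mu_{t(s)})=\bigl(1+\gamma\,t(s)\bigr)^{p/\gamma}\,M_{p}(\mu_{t(s)}),
\end{equation*}
using the crucial identity $\V(s)=\exp(s)=(1+\gamma\,t(s))^{1/\gamma}=V(t(s))$. This is the key observation: the self-similar prefactor exactly matches the denominator structure of the algebraic decay estimates in Theorem \ref{theo:decmom}, so the two should combine to give a quantity that is uniformly bounded in $s\geq 0$.

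For part (1), I would plug \eqref{eq:decmom} and \eqref{eq:decmomb} into the previous identity with $\tau=t(s)$ to obtain
\begin{equation*}
M_{\gamma}(\mu_{0})^{p/\gamma}\,h_{\mathrm{low}}(\tau)^{p/\gamma}\leq M_{p}(\nub_{s})\leq M_{p}(\mu_{0})\,h_{\mathrm{up}}(\tau)^{p/\gamma},
\end{equation*}
with
\begin{equation*}
h_{\mathrm{low}}(\tau)=\frac{1+\gamma\,\tau}{1+\mathcal{K}_{\gamma}M_{\gamma}(\mu_{0})\,\tau},\qquad h_{\mathrm{up}}(\tau)=\frac{1+\gamma\,\tau}{1+\kappa_{p}\,\tau},\qquad \kappa_{p}=\tfrac{\gamma}{2p}(1-\a^{p}-\b^{p})M_{p}(\mu_{0})^{\gamma/p}.
\end{equation*}
Each $h$ is a homographic function of $\tau\geq 0$, monotone (increasing or decreasing depending on the sign of $\gamma-\mathcal{K}_{\gamma}M_{\gamma}(\mu_{0})$, resp.\ $\gamma-\kappa_{p}$), with $h(0)=1$ and $h(\infty)=\gamma/\mathcal{K}_{\gamma}M_{\gamma}(\mu_{0})$ (resp.\ $\gamma/\kappa_{p}$). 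Hence $\inf_{\tau\geq 0}h_{\mathrm{low}}=\min\{1,\gamma/(\mathcal{K}_{\gamma}M_{\gamma}(\mu_{0}))\}$ and $\sup_{\tau\geq 0}h_{\mathrm{up}}=\max\{1,\gamma/\kappa_{p}\}$. Multiplying these extrema by the prefactors and simplifying $\gamma/\kappa_{p}=2p/[(1-\a^{p}-\b^{p})M_{p}(\mu_{0})^{\gamma/p}]$ yields exactly \eqref{eq:decmoms} and \eqref{eq:decmomsb}. Note that the validity of \eqref{eq:decmomb} for all $p\geq 2$ under the assumption $\mu_{0}\in\P_{\exp,\gamma}(\R)$ is precisely the statement that allows us to state \eqref{eq:decmomsb} for all $p\geq 2$.

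Part (2) is treated in a completely parallel way: I would replace the lower bound by \eqref{eq:decmom1}, leading to the same type of homographic ratio
\begin{equation*}
\frac{1+\gamma\,\tau}{1+\gamma\,C_{\gamma,1}A_{1}^{1-1/\gamma}M_{1}(\mu_{0})^{\gamma}\,\tau},
\end{equation*}
whose infimum over $\tau\geq 0$ is $\min\{1,1/(C_{\gamma,1}A_{1}^{1-1/\gamma}M_{1}(\mu_{0})^{\gamma})\}$; substitution produces \eqref{eq:decmom2}. The upper bound \eqref{eq:decmom1b} has the same analytic shape as \eqref{eq:decmomb}, so the identical monotonicity argument delivers \eqref{eq:decmom2b}. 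There is essentially no obstacle here: once the scaling identity $\V(s)^{p}=(1+\gamma\,t(s))^{p/\gamma}$ is written down, the theorem is an accounting exercise on one-variable homographic functions, and the only mild care needed is in splitting cases according to the sign of $\gamma-\mathcal{K}_{\gamma}M_{\gamma}(\mu_{0})$ (and of $\gamma-\kappa_{p}$), which produces the minima and maxima appearing in the statement.
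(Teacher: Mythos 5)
Your proposal is correct and follows essentially the same route as the paper: both exploit the scaling identity $M_{p}(\nub_{s})=\V(s)^{p}M_{p}(\mu_{t(s)})$ together with $\V(s)^{p}=(1+\gamma t(s))^{p/\gamma}$, substitute the bounds of Theorem \ref{theo:decmom}, and then bound the resulting homographic ratio $(1+A\tau)/(1+B\tau)$ by $\min(1,A/B)$ and $\max(1,A/B)$ uniformly in $\tau\ge 0$. Your spelling-out of the monotonicity and endpoint limits of the homographic function is just an expansion of the one-line inequality the authors quote, and your remark about $\P_{\exp,\gamma}$ being what makes \eqref{eq:decmomb} available for all $p\ge 2$ is exactly the observation the paper relies on.
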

\begin{proof} The proof follows simply from the fact that
\begin{equation*}
M_{p}(\nub_{s})=\V(s)^{p}\,M_{p}(\mu_{t(s)}) \qquad \forall\, s \geq 0\,,
\end{equation*}
where $(\mu_{t})_{t}$ is the weak measure solution to \eqref{eq:1} associated to $\mu_{0}$.  Then, according to Theorem \ref{theo:decmom} and using that $\exp(p\,s)=\big(1+\gamma\,t(s)\big)^{p/\gamma}$, we see that for $\gamma \in (0,1]$ it holds
\begin{equation*}
 \left(\dfrac{M_{\gamma}(\mu_{0})(1+\gamma t(s))}{1+\mathcal{K}_{\gamma}\,M_{\gamma}(\mu_{0})\,t(s)}\right)^{\frac{p}{\gamma}}\leq M_{p}\left(\nub_{s}\right)  \leq 
M_{p}(\mu_{0}) \bigg(\dfrac{1+\gamma t(s)}{1+\frac{\gamma}{2p}\left(1-\a^{p}-\b^{p}\right)M_{p}\left(\mu_{0}\right)^{\frac{\gamma}{p}}\,t(s)}\bigg)^{\frac{p}{\gamma}}\,,
\end{equation*}
where the lower bound is valid for any $p\geq \gamma$ while the upper bound is valid for any $p\geq 2$.
Since $\min(1, \frac{A}{B}) \leq \frac{1+At}{1+Bt} \leq \max(1,\frac{A}{B})$ for any $A,B, t \geq 0$ we get the conclusion. We proceed in the same way for $\gamma >1.$
\end{proof}
An important consequence of the above decay is the following proposition.
\begin{prop}\label{prop:expo}
Let $\mu_{0} \in \cap_{k\geq0}\P_{k}^{0}(\R)$ be a given initial condition.  Assume that
\begin{equation}\label{eq:allmom}
M_{p}(\mu_{0}) \leq  {\mathbf{M}_{p}}  \qquad \forall\, p \geq \max(\gamma,2)\,,
\end{equation}
where
\begin{equation*}
{\mathbf{M}_{p}}:=\left(\dfrac{2p}{1-\a^{p}-\b^{p}}\right)^{\frac{p}{\gamma}} \qquad \forall\, p \geq \max(\gamma,2)\,.
\end{equation*}
Then, $\mu_{0} \in \P_{\exp,\gamma}(\R)$ and there exists an explicit  $\alpha > 0$ and $C=C(\alpha) > 0$ such that
\begin{equation*}
\sup_{s \geq 0}\int_{\R}\exp(\alpha |\xi|^{\gamma}) \,\nub_{s}(\d \xi) \leq C\,,
\end{equation*}
where $\nub_{s}=\mathcal{F}_{s}(\mu_{0})$ for any $s \geq 0$.
\end{prop}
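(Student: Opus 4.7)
The plan is to estimate $\int_{\R}\exp(\alpha|\xi|^\gamma)\,\nub_s(\d\xi)$ by expanding the exponential as a power series in $|\xi|^\gamma$ and using the \emph{uniform} moment bound \eqref{eq:decmomsb}--\eqref{eq:decmom2b} from Theorem \ref{theo:decmoms}. Under the hypothesis \eqref{eq:allmom} the quantity $\max\{M_p(\mu_0),\mathbf{M}_p\}$ collapses to $\mathbf{M}_p$, so the control of $M_p(\nub_s)$ is independent of $s \geq 0$, and the only remaining issue is the convergence, for a small explicit $\alpha>0$, of the numerical series $\sum_{p\geq 0}\tfrac{\alpha^p}{p!}\mathbf{M}_{\gamma p}$.

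First I would verify that the hypothesis \eqref{eq:allmom} already places $\mu_0$ in $\P_{\exp,\gamma}(\R)$, so that Theorem \ref{theo:decmoms} is applicable. By monotone convergence
\[
\int_{\R}\exp(\alpha|x|^\gamma)\mu_0(\d x)=\sum_{p=0}^{\infty}\frac{\alpha^p}{p!}\,M_{\gamma p}(\mu_0),
\]
and bounding $M_{\gamma p}(\mu_0)\leq \mathbf{M}_{\gamma p}$ for all $\gamma p\geq \max(\gamma,2)$ (the finitely many remaining indices are handled via interpolation with $M_{\max(\gamma,2)}(\mu_0)$ and $M_0(\mu_0)=1$) reduces the matter to the convergence of $\sum_p\frac{\alpha^p}{p!}\mathbf{M}_{\gamma p}$, which I postpone to the last step.

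Next, granted $\mu_0\in\P^0(\R)\cap\P_{\exp,\gamma}(\R)$, I invoke Theorem \ref{theo:decmoms}: in the regime $\gamma\in(0,1]$ estimate \eqref{eq:decmomsb} yields, for every $p\geq 2$,
\[
M_p(\nub_s)\leq \max\big\{M_p(\mu_0),\mathbf{M}_p\big\}=\mathbf{M}_p\qquad\forall\,s\geq 0,
\]
where the last equality uses \eqref{eq:allmom}; the case $\gamma>1$ is identical from \eqref{eq:decmom2b}. Then the same power-series expansion applied to $\nub_s$ and combined with monotone convergence gives, uniformly in $s\geq 0$,
\[
\int_{\R}\exp(\alpha|\xi|^\gamma)\,\nub_s(\d\xi)
=\sum_{p=0}^{\infty}\frac{\alpha^p}{p!}\,M_{\gamma p}(\nub_s)
\leq \sum_{p=0}^{\infty}\frac{\alpha^p}{p!}\,\mathbf{M}_{\gamma p}+K_0,
\]
where $K_0$ collects the finitely many low-order terms ($\gamma p<\max(\gamma,2)$) dealt with by interpolation.

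The main obstacle, and the only genuine calculation, is to exhibit an explicit $\alpha>0$ for which the right-hand side is finite. Since $\a,\b\in(0,1)$, one has $\a^{\gamma p}+\b^{\gamma p}\to 0$ as $p\to\infty$, hence $\mathbf{M}_{\gamma p}=\bigl(\tfrac{2\gamma p}{1-\a^{\gamma p}-\b^{\gamma p}}\bigr)^{p}\sim (2\gamma p)^p$. Stirling's formula $p!\sim\sqrt{2\pi p}\,(p/e)^p$ then yields
\[
\frac{\alpha^p\mathbf{M}_{\gamma p}}{p!}\sim\frac{(2\gamma e\,\alpha)^p}{\sqrt{2\pi p}},
\]
so the series converges for any $\alpha<\alpha_*:=(2\gamma e)^{-1}$ (multiplied by a uniform factor coming from $\inf_{p\geq 1}(1-\a^{\gamma p}-\b^{\gamma p})$, which is strictly positive). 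Choosing any such $\alpha$ produces a finite constant $C(\alpha)$ independent of $s$, which simultaneously gives $\mu_0\in\P_{\exp,\gamma}(\R)$ (the $s=0$ case) and the uniform bound claimed in the proposition.
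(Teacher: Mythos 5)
Your proof is correct and follows essentially the same route as the paper's: expand $\exp(\alpha|\xi|^\gamma)$ in a power series, bound $M_{\gamma p}(\nub_s)$ by $\mathbf{M}_{\gamma p}$ uniformly in $s$ via Theorem~\ref{theo:decmoms}, and use Stirling's formula together with $\lim_p(1-\a^{\gamma p}-\b^{\gamma p})=1$ to exhibit an explicit radius of convergence. You in fact spell out the Stirling asymptotics (leading to $\alpha<(2\gamma e)^{-1}$) and the interpolation for the finitely many low-order indices $\gamma p<\max(\gamma,2)$ a bit more explicitly than the paper, which merely asserts both.
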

\begin{proof}
Let us first prove that $\mu_{0} \in \P_{\exp,\gamma}(\R)$. Notice that for any $z> 0$
\begin{equation*}
\int_{\R}\exp(z \,|\xi|^{\gamma})\mu_{0}(\d\xi)= \sum_{p=0}^{\infty}M_{\gamma\,p}(\mu_{0}) \dfrac{z^{p}}{p!}\,.
\end{equation*}
Let us denote by $p_0$ the integer such that $\gamma\, p_0\geq \max(\gamma,2)$ and $\gamma\,(p_0-1)<\max(\gamma,2)$. Using Stirling formula together with the fact that $\lim_{p \to \infty}(1-\a^{\gamma\,p}-\b^{\gamma\,p})=1$, one can check that there exists some explicit $\alpha > 0$ such that the series
\begin{equation*}
\sum_{p=p_0}^{\infty}  {\mathbf{M}_{\gamma\,p}} \dfrac{z^{p}}{p!}
\end{equation*}
converges for any $0\leq z \leq \alpha$ which gives the result.  Now, we may define $\nub_{s}=\mathcal{F}_{s}(\mu_{0})$ for any $s \geq 0$. As previously, for any $z> 0$
\begin{equation*}
\int_{\R}\exp(z \,|\xi|^{\gamma})\nub_{s}(\d\xi)= \sum_{p=0}^{\infty}M_{\gamma\,p}(\nub_{s}) \dfrac{z^{p}}{p!}\,,
\end{equation*}
  and we deduce from Theorem \ref{theo:decmoms} that
\begin{equation*}
M_{p}(\nub_{s}) \leq  {\mathbf{M}_{p}}  \qquad \forall\,  p \geq \max(\gamma,2),\, s \geq 0\,.
\end{equation*}
The conclusion follows.
\end{proof}
\begin{nb} We do not need to derive the equation satisfied by $\nub_{s}$ since we are interested only in the fixed point of the semiflow. However, using the fact that $\mathcal{S}_{t}(\mu_{0})$ actually provides a \emph{strong solution} to \eqref{eq:1}, using the chain rule it follows that
\begin{equation}\label{eq:2}
\int_{\R}\phi(\xi)\nub_{s}(\d \xi)=\int_{\R}\phi(\xi)\mu_{0}(\d \xi) + \int_{0}^{s}\d\tau 
\int_{\R} \xi \phi'(\xi) \nub_{\tau}(\d \xi) +\int_{0}^{s}\la \Q\left(\nub_{\tau}\,,\,\nub_{\tau}\right)\,;\,\phi \ra \d\tau \quad \forall\, s \geq 0\,,
\end{equation}
for any $\phi \in \C^{1}_{b}(\R)$ and where $\phi'$ stands for the derivative of $\phi$.
\end{nb} 
The link between solution to \eqref{meas_form} and the semiflow $(\mathcal{F}_{s})_{s\geq0}$ is established by the following lemma.
\begin{lem}
Any fixed point $\mub \in \P^{0}(\R)\cap \P_{\exp,\gamma}(\R)$ of the semi-flow $(\mathcal{F}_{s})_{s\geq 0}$ is a solution to \eqref{meas_form}.
\end{lem}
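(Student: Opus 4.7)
The plan is to invoke the time-integrated identity \eqref{eq:2}, which holds for the rescaled trajectory $\nub_s = \mathcal{F}_s(\mub)$ starting from any initial measure in $\P^0(\R) \cap \P_{\exp,\gamma}(\R)$, and then exploit the fact that a fixed point makes the trajectory constant in $s$.

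First I would verify that the hypotheses to apply \eqref{eq:2} with $\mu_0 = \mub$ are satisfied: by assumption $\mub \in \P^0(\R) \cap \P_{\exp,\gamma}(\R)$, so by Theorem \ref{theo:cauchy} the semiflow $\mathcal{S}_t(\mub)$ is well-defined and, being obtained via the approximation/construction scheme from Proposition \ref{prop:existence}, provides the strong solution required in the Remark preceding \eqref{eq:2}. Consequently, for every test function $\phi \in \C^1_b(\R)$ and every $s \geq 0$,
\begin{equation*}
\int_{\R}\phi(\xi)\nub_{s}(\d \xi) = \int_{\R}\phi(\xi)\mub(\d \xi) + \int_{0}^{s}\int_{\R} \xi\, \phi'(\xi)\, \nub_{\tau}(\d \xi)\,\d\tau + \int_{0}^{s}\la \Q(\nub_{\tau},\nub_{\tau})\,;\,\phi \ra\, \d\tau.
\end{equation*}

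Next, the fixed-point hypothesis $\mathcal{F}_{s}(\mub) = \mub$ for every $s \geq 0$ means $\nub_\tau = \mub$ for all $\tau \in [0,s]$. Substituting this into the identity above and cancelling the two equal boundary terms on the two sides leaves
\begin{equation*}
0 = s\int_{\R} \xi\, \phi'(\xi)\, \mub(\d \xi) + s\,\la \Q(\mub,\mub)\,;\,\phi \ra, \qquad \forall\, s \geq 0.
\end{equation*}
Dividing by $s > 0$ and recalling the definition \eqref{eq:Qmu} of $\la \Q(\mub,\mub)\,;\,\phi\ra$ yields exactly the weak steady identity \eqref{meas_form} for every $\phi \in \C^1_b(\R)$.

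I do not anticipate a genuine obstacle here: the short computation is entirely driven by the constancy $\nub_\tau \equiv \mub$. The only point requiring a bit of care is ensuring the integrand $\la \Q(\mub,\mub);\phi\ra$ is finite and the time integral meaningful, but both follow immediately from $\mub \in \P_{\exp,\gamma}(\R) \subset \P_{\gamma}(\R)$ together with the bound $|\la \Q(\mub,\mub);\phi\ra| \leq 2\|\phi\|_\infty \|\mub\|_\gamma^2$ noted after Definition \ref{defi:weak}. Hence the argument reduces to substitution and cancellation in \eqref{eq:2}.
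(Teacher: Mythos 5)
Your argument is correct and follows essentially the same route as the paper: both proofs reduce to the constancy $\nub_{\tau}\equiv\mub$ together with the chain-rule differentiation hidden in the push-forward identity \eqref{eq:phinus}. The only difference is presentational: the paper differentiates $\int_{\R}\phi\big(V(t)^{-1}\xi\big)\mub(\d\xi)=\int_{\R}\phi(x)\mu_{t}(\d x)$ directly and evaluates at $t=0$, whereas you invoke the integrated identity \eqref{eq:2} and divide by $s$; the two computations are equivalent. Just note that \eqref{eq:2} is only asserted in a Remark (``using the chain rule it follows''), so by starting from it you are outsourcing the same chain-rule step that the paper's proof carries out inline.
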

\begin{proof} Let $\mub$ be a fixed point of the semi-flow $(\mathcal{F}_{s})_{s}$, that is, $\mathcal{F}_{s}(\mub)=\mub$ for any $s \geq 0$.  Then, according to \eqref{eq:phinus}, for any $\phi \in \C_b(\R)$
\begin{equation*}
\int_{\R}\phi(\xi)\mub(\d \xi)=\int_{\R}\phi(\V(s)x)\mu_{t(s)}(\d x) \qquad \forall\, s \geq 0\,,
\end{equation*}
where $\mu_{t(s)}=\mathcal{S}_{t(s)}(\mub)$.  In particular, choosing $s=s(t)$
\begin{equation*}
\int_{\R}\phi(\xi)\mub(\d \xi)=\int_{\R}\phi(V(t)x)\mu_{t}(\d x) \qquad \forall\, t \geq 0\,.
\end{equation*}
Applying the above to $\phi\big(V(t)^{-1}\cdot\big)$ instead of $\phi$, one obtains
\begin{equation*}
\int_{\R}\phi\big(V(t)^{-1}\xi\big)\mub(\d \xi)=\int_{\R}\phi(x)\mu_{t}(\d x) \qquad \forall\, t \geq 0\,.
\end{equation*}
Computing the derivative with respect to $t$ and assuming $\phi \in \C^{1}_b(\R)$, we get
\begin{equation*}
\dfrac{\d}{\d t}\big(V(t)^{-1}\big)\int_{\R}\xi \phi'\big(V(t)^{-1}\xi\big)\mub(\d \xi)= \la \Q(\mu_{t},\mu_{t})\,;\,\phi\ra \qquad \forall\, t \geq 0\,.
\end{equation*}
Using the definition of $V(t)$ it finally follows that
\begin{equation*}
-\big(1+\gamma t\big)^{-\frac{1}{\gamma}-1}\int_{\R}\xi \phi'\big(V(t)^{-1}\xi\big)\mub(\d \xi)=\la \Q(\mu_{t},\mu_{t})\,;\,\phi \ra \qquad \forall\, t \geq 0\,.
\end{equation*}
Taking in particular $t=0$ it follows that $\mub$ satisfies \eqref{meas_form}.
\end{proof}
\section{Existence of a steady solution to the rescaled problem}\label{sec=exi}
\subsection{Steady measure solutions are $L^{1}$ steady state}
In this section we prove Theorem \ref{theo:meas-L1}, that is, we prove that steady measure solutions to \eqref{steady} are in fact $L^{1}$ functions provided no mass concentration happens at the origin.  The argument is based on the next two propositions.
\begin{prop}\label{prop:Hmu}
Let $\mub \in \P_{\max(\gamma,2)}^0(\R)$ be a steady solution to \eqref{steady}. Then, there exists $H \in  L^{1}(\R)$ such that
\begin{equation*}
\xi\,\mub(\d \xi)=H(\xi) \d\xi\,.
\end{equation*}
\end{prop}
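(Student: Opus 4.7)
The plan is to test the stationary equation \eqref{meas_form} against primitives of compactly supported functions and then read off the density $H$ explicitly by Fubini.

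Concretely, for $\psi\in\C_c(\R)$ set
\begin{equation*}
\Phi_\psi(\xi):=\int_{-\infty}^{\xi}\psi(t)\,\d t.
\end{equation*}
Since $\psi$ has compact support, $\Phi_\psi$ is bounded by $\|\psi\|_{L^1}$ and continuously differentiable with $\Phi_\psi'=\psi\in\C_b(\R)$, so $\Phi_\psi\in\C^1_b(\R)$ qualifies as a test function in Definition~\ref{defi:station}. Using $\a+\b=1$ together with the fundamental theorem of calculus,
\begin{equation*}
\Delta\Phi_\psi(\xi,\eta)\;=\;\int_{\xi}^{\a\xi+\b\eta}\psi(\zeta)\,\d\zeta\;+\;\int_{\eta}^{\b\xi+\a\eta}\psi(\zeta)\,\d\zeta\;=\;\int_{\R}\psi(\zeta)\,\mathsf{K}(\xi,\eta,\zeta)\,\d\zeta,
\end{equation*}
where $\mathsf{K}$ is a sum of two signed indicator kernels, each supported in an interval of length $\b|\xi-\eta|$ and bounded by $1$ in absolute value.

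Plugging this into \eqref{meas_form} yields a triple integral that Fubini is allowed to reorder: integrating $|\mathsf{K}|$ against $|\psi|$ in $\zeta$ first gives at most $2\|\psi\|_{L^1}$, so the $\mub\otimes\mub$-integrand is dominated by $2\|\psi\|_{L^1}|\xi-\eta|^\gamma$, whose integral is controlled by $2^{\gamma+2}\|\psi\|_{L^1}\,M_\gamma(\mub)<\infty$ since $\mub\in\P_\gamma(\R)$. Swapping orders, one obtains the key identity
\begin{equation*}
\int_{\R}\xi\,\psi(\xi)\,\mub(\d\xi)\;=\;\int_{\R}\psi(\zeta)\,H(\zeta)\,\d\zeta\qquad\forall\,\psi\in\C_c(\R),
\end{equation*}
with
\begin{equation*}
H(\zeta)\;:=\;-\tfrac{1}{2}\int_{\R^2}|\xi-\eta|^\gamma\,\mathsf{K}(\xi,\eta,\zeta)\,\mub(\d\xi)\mub(\d\eta).
\end{equation*}

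To close the argument I check that $H\in L^1_{\mathrm{loc}}(\R)$: for any $R>0$, each of the two indicator intervals intersects $[-R,R]$ in a set of Lebesgue measure at most $2R$, so
\begin{equation*}
\int_{-R}^{R}|H(\zeta)|\,\d\zeta\;\leq\;2R\int_{\R^2}|\xi-\eta|^\gamma\,\mub(\d\xi)\mub(\d\eta)\;\leq\;2^{\gamma+2}R\,M_\gamma(\mub)<\infty.
\end{equation*}
Combined with the identity above, this forces the locally finite signed Radon measures $\xi\,\mub(\d\xi)$ and $H(\zeta)\,\d\zeta$ to coincide. Since $\mub\in\P_{\max(\gamma,2)}\subset\P_1$, the measure $\xi\,\mub$ has finite total variation equal to $M_1(\mub)$, which must also equal $\|H\|_{L^1(\R)}$; hence $H\in L^1(\R)$, as claimed.

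The delicate step is the Fubini interchange: one must dominate the $\zeta$-integral by $\|\psi\|_{L^1}$, \emph{not} by the naive $\|\psi\|_\infty\,\b|\xi-\eta|$, because the latter would force the moment $M_{\gamma+1}(\mub)$ which is not a priori available under the sole hypothesis $\mub\in\P_{\max(\gamma,2)}^0$ when $\gamma>1$. This simple choice of majorant is exactly what makes the argument go through without any additional regularity assumption on $\mub$.
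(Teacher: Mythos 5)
Your argument is correct and proves exactly the statement, but it follows a more elementary, constructive route than the paper does. The paper defines the distribution $\boldsymbol\beta:=\xi\,\mub(\d\xi)$, reads \eqref{meas_form} as saying that the distributional derivative of $\boldsymbol\beta$ equals $\Q(\mub,\mub)$, notes that $\Q^{\pm}(\mub,\mub)$ are finite positive Borel measures because $\ds\int_{\R^2}|\xi-\eta|^\gamma\mub(\d\xi)\mub(\d\eta)<\infty$, and then invokes the abstract result (\cite[Theorem 6.77]{DemX2}) that a distribution whose derivative is a Radon measure is a $BV_{\mathrm{loc}}$ function; since $\boldsymbol\beta$ is at once a measure and a locally integrable function, the measure must be absolutely continuous, and finiteness of $M_{1}(\mub)$ then gives $H\in L^{1}(\R)$. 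You effectively unfold this abstract $BV$ step: by testing \eqref{meas_form} against the antiderivative $\Phi_\psi\in\C^{1}_{b}(\R)$ of an arbitrary $\psi\in\C_{c}(\R)$, writing $\Delta\Phi_\psi$ as an integral of a bounded signed indicator kernel, and applying Fubini, you produce the density $H$ explicitly as an integral of $|\xi-\eta|^\gamma\,\mathsf{K}(\xi,\eta,\cdot)$ against $\mub\otimes\mub$. Your remark that one must dominate the $\zeta$-integral by $\|\psi\|_{L^1}$ rather than by $\|\psi\|_\infty\,\b|\xi-\eta|$ is exactly the point that keeps the Fubini interchange within reach of the hypothesis $\mub\in\P_{\max(\gamma,2)}^{0}(\R)$ when $\gamma>1$, where $M_{\gamma+1}(\mub)$ is not assumed finite; and the subsequent identification of the two Radon measures via their agreement on $\C_{c}(\R)$ together with $|\xi\,\mub|(\R)=M_{1}(\mub)<\infty$ is sound. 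In short, the paper's proof is shorter but leans on an external theorem from $BV$ theory, whereas yours is self-contained and has the added benefit of producing a closed-form formula for $H$; both are correct.
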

\begin{proof}
Introduce the distribution $\boldsymbol \beta(\xi):=\xi \mub(\d\xi)$ which, of course, is defined by the identity
\begin{equation*}
\int_{\R}\boldsymbol \beta(\xi)\psi(\xi)\d\xi=\int_{\R} \xi \psi(\xi)\mub(\d\xi) \quad \text{ for any } \psi \in \mathcal{C}^{\infty}_{c}(\R)\,.
\end{equation*}
One sees from \eqref{meas_form} that $\boldsymbol \beta$ satisfies
\begin{equation}\label{eq:diff}
\dfrac{\d}{\d \xi}\boldsymbol \beta(\xi)=\Q(\mub,\mub)
\end{equation}
in the sense of distributions.  Since $\mub \in \P_{\max(\gamma,2)}^0(\R)$,  it follows that  $\Q^{\pm}(\mub,\mub)$ belongs to $\M^{+}(\R)$.  Therefore, as a solution to \eqref{eq:diff}, the measure $\boldsymbol \beta$ is a distribution whose derivative belongs to $\M(\R)$.  It follows from \cite[Theorem 6.77]{DemX2} that $\boldsymbol \beta\in BV_{\mathrm{loc}}(\R)$ where $BV(\R)$ denotes the space of functions with bounded variations. This implies that the measure $\boldsymbol\beta$ is absolutely continuous. In particular, there exists $H \in L^1(\R)$ such that $\boldsymbol \beta(\d\xi)=H(\xi)\d\xi$. This proves the result.
\end{proof}
\begin{prop}\label{prop:decomp}
Let $\mub \in \P_{\max(\gamma,2)}^0(\R)$ be a steady solution to \eqref{steady}. Then, there exist   $\kappa_0 \geq 0$ and $G \in L^1_{\max(\gamma,2)}(\R)$ nonnegative such that
\begin{equation*}
\mub(\d\xi) = G(\xi)\d\xi + \kappa_{0}\, \delta_0(\d\xi)
\end{equation*}
where $\delta_{0}$ is the Dirac mass in $0$.
\end{prop}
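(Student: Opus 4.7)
The plan is to combine Proposition \ref{prop:Hmu} with the Lebesgue decomposition theorem. By Proposition \ref{prop:Hmu} the measure $\xi\mub(\d\xi)$ is absolutely continuous with respect to Lebesgue measure; the goal is to transfer this absolute continuity back to $\mub$ itself away from the origin, and to show that the singular part of $\mub$ can only sit at $0$.

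First, I would write the Lebesgue decomposition $\mub = \mub_{ac} + \mub_s$, where $\mub_{ac}(\d\xi) = G(\xi)\d\xi$ for some nonnegative $G \in L^1(\R)$ and $\mub_s$ is a nonnegative measure singular with respect to Lebesgue measure, concentrated on some Borel set $S$ with $|S|=0$. Applying Proposition \ref{prop:Hmu} we have $\xi\mub(\d\xi) = H(\xi)\d\xi$ with $H \in L^1(\R)$, so that the signed measure $\xi\mub_s(\d\xi) = (H(\xi)-\xi G(\xi))\d\xi$ is absolutely continuous as well.

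Next, the total variation of $\xi\mub_s$ equals $|\xi|\mub_s(\d\xi)$ and, as the total variation of an absolutely continuous signed measure, it is itself absolutely continuous. In particular, for every Borel set $A$ of Lebesgue measure zero we have $\int_A |\xi|\mub_s(\d\xi) = 0$; choosing $A = S\setminus\{0\}$ and using that $\mub_s$ is concentrated on $S$ yields $\int_{\R\setminus\{0\}} |\xi|\mub_s(\d\xi) = 0$. A standard truncation, namely $\mub_s(\{|\xi|\geq 1/n\}) \leq n\int_{\{|\xi|\geq 1/n\}}|\xi|\mub_s(\d\xi) = 0$ for any $n \in \N$, then forces $\mub_s(\R\setminus\{0\}) = 0$. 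Hence $\mub_s = \kappa_0\, \delta_0$ with $\kappa_0 := \mub_s(\{0\}) \geq 0$.

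Finally, the decomposition $\mub(\d\xi) = G(\xi)\d\xi + \kappa_0\,\delta_0(\d\xi)$ follows. Since $\mub \in \P_{\max(\gamma,2)}^0(\R)$ and the Dirac mass at $0$ contributes nothing to the $\max(\gamma,2)$-moment, one gets $\int_\R |\xi|^{\max(\gamma,2)}G(\xi)\d\xi = \int_\R |\xi|^{\max(\gamma,2)}\mub(\d\xi) < \infty$, so $G \in L^1_{\max(\gamma,2)}(\R)$; nonnegativity of $G$ is inherited from $\mub \geq 0$. I do not expect any serious obstacle here: once Proposition \ref{prop:Hmu} is available the argument is a fairly routine application of the Lebesgue decomposition, the only point needing some care being the interaction between the singular support $S$ and the point $\{0\}$, which is precisely what the truncation above handles.
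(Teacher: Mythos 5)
Your proof is correct and follows essentially the same route as the paper's: Lebesgue decomposition $\mub = G\,\d\xi + \mub_s$, then Proposition~\ref{prop:Hmu} forces $\xi\mub_s(\d\xi)$ to be absolutely continuous while also being concentrated on a Lebesgue-null set, hence zero off the origin. The paper shortcuts this step by invoking the uniqueness of the Lebesgue decomposition of $\xi\mub(\d\xi)$ to conclude $\xi\mub_s=0$ directly, whereas you spell it out via total variation and a truncation $\mub_s(\{|\xi|\geq 1/n\})\leq n\int|\xi|\mub_s(\d\xi)=0$; both give the same conclusion, and yours makes the passage from $\xi\mub_s=0$ to $\mathrm{supp}\,\mub_s\subset\{0\}$ explicit where the paper states it without detail.
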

\begin{proof} 
Let us denote by $\mathcal{B}(\R)$ the set of Borel subsets of $\R$. According to Lebesgue decomposition Theorem \cite[Theorem 8.1.3]{stroock} there exists $G \in L^1(\R)$ nonnegative and a measure $\mub_s$ such that
\begin{equation*}
\mub(\d\xi) = G(\xi)\d\xi + \mub_s(\d \xi)
\end{equation*}
where the measure $\mub_s$ is singular to the Lebesgue measure over $\R$.  More specifically, there is $\Gamma\in \mathcal{B}(\R)$ with zero Lebesgue measure such that $\mub_s(\R\backslash \Gamma) = 0$. The proof of the Lemma consists then in proving that $\mub_s$ is supported in $\{0\}$, i.e. $\Gamma = \{0\}$. This comes directly from Proposition \ref{prop:Hmu}.  Indeed, by uniqueness of the Lebesgue decomposition, one has
\begin{equation*}
\xi\mub(\d\xi) =  \xi G(\xi)\d\xi +  \xi \mub_s(\d\xi) = \,H(\xi)\d\xi\,,
\end{equation*}
with {$H \in L^1(\R)$}, so that
\begin{equation*}
\xi \mub_s(\d\xi) = 0\,.
\end{equation*}
This implies that $\mub_s(\R \backslash (-\delta; \delta)) = 0$ for any $\delta > 0$ and therefore that $\mub_s$ is supported in $\{0\}$.  Notice that, since $\mub \in \P_{\max(\gamma,2)}^0(\R)$ one has $G \in L^{1}_{\max(\gamma,2)}(\R)$.
\end{proof}
\begin{proof}[Proof of Theorem \ref{theo:meas-L1}]
With the notations of Proposition \ref{prop:decomp}, our aim is to show that $\kappa_0 = 0$. 
Plugging the decomposition obtained in Proposition \ref{prop:decomp} in the weak 
formulation \eqref{meas_form}, we get 
\begin{equation*}
- \int_{\R} \xi \phi'(\xi)G(\xi)\d\xi =\int_{\R} \Q(G, G)(\xi)\phi(\xi)\d\xi
+ \kappa_{0}\int_{\R} |\xi|^\gamma \Big(\phi\left(\a\, \xi\right) +\phi\left(\b\, \xi\right) -\phi(\xi)- \phi(0)\Big) G(\xi) \d\xi\,,
\end{equation*}
where we used that $\Q(\delta_0, \delta_0) = 0 = \ds\int_{\R} \xi \phi'(\xi)
\delta_0(\d\xi)$.  Recall the hypothesis
\begin{equation}\label{contra}
\mathbf{m}_\gamma=\ds\int_\R|\xi|^\gamma G(\xi)\d\xi>0\,,
\end{equation}
from which the above can be reformulated as  
\begin{align*}
\kappa_{0}\,\mathbf{m}_\gamma \phi(0) 
&= \int_{\R} \Q(G, G)(\xi) \phi(\xi)\d\xi \\
&+  \int_{\R} \xi \phi'(\xi)G(\xi)\d\xi
+  \kappa_{0} \int_{\R} |\xi|^\gamma \Big(\phi\left(\a\, \xi\right) +\phi\left(\b\, \xi\right) -\phi(\xi)\Big) G(\xi) \d\xi
\end{align*}
for any  $\phi\in\mathcal{C}^1_b (\R)$. Notice that the above identity can be rewritten as
\begin{equation}\label{kapphi}
\kappa_{0}\, \phi(0) \mathbf{m}_\gamma 
= \int_{\R} \big(A(\xi) \phi(\xi) + B(\xi) \phi'(\xi)\big)\d\xi
\end{equation}
for some $L^1$-functions
\begin{equation*}
A (\xi) = \Q(G, G)(\xi) + \frac{\kappa_{0}}{\a}  \left|\frac{\xi}{\a}\right|^\gamma G\left(\frac{\xi}{\a}\right) + \frac{\kappa_0}{\b} \left|\frac{\xi}{\b}\right|^\gamma G\left(\frac{\xi}{\b}\right)-\kappa_{0}\,|\xi|^\gamma\, G(\xi)
\end{equation*}
and $B(\xi) = \xi G(\xi)$.  Let $\phi$ be a smooth function with support in $(-1,1)$ and satisfying $\phi(0)=1$.  For any $\varepsilon>0$, $\phi\left(\frac{\cdot}{\varepsilon}\right)$ belongs to $\mathcal{C}^1_b(\R)$, one can apply \eqref{kapphi} to get 
\begin{equation*}
\kappa_{0} \;\mathbf{m}_\gamma 
= \int_{-\varepsilon}^\varepsilon \left(A(\xi) \phi\left(\frac{\xi}{\varepsilon}\right) 
+  G(\xi) \frac{\xi}{\varepsilon}\;\phi'\left(\frac{\xi}{\varepsilon}\right)\right)\d\xi\,.
\end{equation*}
Hence,
\begin{equation*}
0 \leq \kappa_{0} \;\mathbf{m}_\gamma 
\leq \|\phi\|_{L^\infty} \int_{-\varepsilon}^\varepsilon |A(\xi)| \d\xi
+  \sup_{\xi\in\R}|\xi\,\phi'(\xi)| \int_{-\varepsilon}^\varepsilon G(\xi)
\d\xi\,.
\end{equation*}
Letting $\varepsilon \to 0$, one obtains $\kappa_{0}\,\mathbf{m}_\gamma = 0$, thus, using hypothesis \eqref{contra} we must have $\kappa_{0} = 0$.
\end{proof}
\subsection{Proof of Theorem \ref{existence}}
We have all the previous machinery at hand to prove the existence of ``physical'' solutions to \eqref{steady} in the sense of Definition \ref{defi:station} employing the dynamic fixed point Theorem \ref{GPV}.  Let us distinguish here two cases:  \medskip

$1)$ First, assume that $\gamma \in (0,1]$.  Setting $\mathcal{Y}$ to be the space $\M(\R)$ endowed with the weak-$\star$ topology, we introduce the nonempty closed convex set
\begin{equation*}
\mathcal{Z}:= \left\{\mu \in \P^0(\R)\; \text{ such that}\hspace{0.2cm}\int_{\R}|\xi|^{\gamma}\mu(\d\xi) \geq \dfrac{\gamma}{\mathcal{K}_{\gamma}}\,, \hspace{0.2cm}\text{and} \hspace{0.2cm} \int_{\R}|\xi|^{p}\,\mu(\d \xi) \leq \mathbf{M}_{p} \quad \forall\, { p \geq 2}   \right\}
\end{equation*}
where $\mathbf{M}_{p}$ was defined in Proposition \ref{prop:expo} and $\mathcal{K}_{\gamma} > 0$ is the positive constant given in Theorem \ref{theo:decmoms}.  This set is a compact subset of $\mathcal{Y}$ thanks to the uniform moment estimates (recall that $\mathcal{Y}$ is endowed with the weak-$\star$ topology).  Moreover, according to Proposition \ref{prop:expo}, there exists $\alpha > 0$ and $C(\alpha) >0$ such that, for any $\mu \in \mathcal{Z}$,
\begin{equation*}
\int_{\R}\exp(\alpha |\xi|^{\gamma})\,\mu(\d\xi) \leq C(\alpha) < \infty\,.
\end{equation*}
Thus, $\mu  \in \P_{\exp,\gamma}(\R)$.  Therefore, using Theorem \ref{theo:cauchy}, $(\mathcal{S}_{t}(\mu))_{t \geq 0}$ and, consequently,  $(\mathcal{F}_{s}(\mu))_{s\geq 0}$ are well-defined.  Setting $\nub_{s}=\mathcal{F}_{s}(\mu)$, it follows from Theorem \ref{theo:decmoms} that 
\begin{equation*}
\int_{\R}|\xi|^{p} \nub_{s}(\d\xi) \leq \mathbf{M}_{p} \qquad \forall\, p \geq 2,\, s \geq 0\,.
\end{equation*}
Using the lower bound in \eqref{eq:decmom}, we deduce that
\begin{equation*}
\int_{\R}|\xi|^{\gamma}\,\nub_{s}(\d\xi) \geq \dfrac{\gamma}{\mathcal{K}_{\gamma}} \qquad \forall s \geq 0\,.
\end{equation*}
This shows that $\nub_{s} \in \mathcal{Z}$ for any $s \geq 0$, i.e. $\mathcal{F}_{s}(\mathcal{Z}) \subset \mathcal{Z}$ for all $s \geq 0.$
Moreover, one deduces directly from Proposition \ref{prop:cont} that $(\mathcal{F}_{s})_{s}$ is continuous over $\mathcal{Z}$.  As a consequence, it is possible to apply Theorem \ref{GPV} to deduce the existence of a measure $\mub \in \mathcal{Z}$ such that $\mathcal{F}_{s}(\mub)=\mub$, a steady measure  solution to \eqref{steady}  in the sense of Definition \ref{defi:station}.  Finally, since $\mub \in \mathcal{Z}$, its moment of order $\gamma$ is bounded away from zero and by Theorem \ref{theo:meas-L1}, $\mub$ is absolutely continuous with respect to the Lebesgue measure.  This proves the result in the case $\gamma\in(0,1]$.\medskip

$2)$ Assume now $\gamma >1$ and let $\gmm=\max(\gamma,2)$.  Then, consider   $\mathcal{Y}$ to be the space $\M(\R)$ endowed with the weak-$\star$ topology and we introduce the nonempty closed convex set
\begin{equation*}
\mathcal{Z}:= \left\{\mu \in \P^0(\R)\; \text{ such that } \int_{\R}|\xi|^{p}\,\mu(\d \xi) \leq \mathbf{M}_{p}\;\; \forall\, p \geq \gmm,\;\; \text{and}\;\; \int_{\R}|\xi| \mu(\d\xi) \geq \ell\right\}
\end{equation*}
for some positive constant $\ell$ to be determined.  In fact, we prove that there exists $\ell=\ell(\gamma)$ sufficiently small such that $\mathcal{Z}$ is invariant under the semi-flow $(\mathcal{F}_{s})_{s\geq 0}$.  Indeed, according to \eqref{eq:decmom2}, for any $\ell > 0$ if $\mu_{0}$ is such that $M_{1}(\mu_{0}) \geq \ell$, then
\begin{equation*}
M_{1}\big(\mathcal{F}_{s}(\mu_{0})\big) \geq \min\bigg\{\ell\,;\,\left(\dfrac{1}{C_{\gamma,1}\,A_{1}^{1-\frac{1}{\gamma}}}\right)^{\frac{1}{\gamma}}\bigg\} \qquad \forall\,s\geq0\,,
\end{equation*}
where $C_{\gamma,1} > 0$ is some positive universal constant.  And, according to \eqref{cte}, $A_{1}$ is \emph{any} real number larger than $\max\left\{\tilde{C}_{\gamma,1}\,;\,\dfrac{M_{1+\gamma}(\mu_{0})}{M_{1}(\mu_{0})^{1+{\gamma}}}\right\}$,  where $\tilde{C}_{\gamma,1}$ is another universal positive constant.  In particular, choosing $\ell$ small enough such that 
\begin{equation*}
\dfrac{\mathbf{M}_{1+\gamma}}{\ell^{1+\gamma}} \geq \tilde{C}_{\gamma,1}\,,
\end{equation*}
it is possible to pick
$
A_{1}:=\dfrac{\mathbf{M}_{1+\gamma}}{\ell^{1+\gamma}}\,,
$
where we recall that, since $\mu_{0} \in \mathcal{Z}$ and $1+\gamma\geq\max\{\gamma,2\}$, one has $M_{1+\gamma}(\mu_{0}) \leq \mathbf{M}_{1+\gamma}$.  In such a case, one gets
\begin{equation*}
\min\bigg\{\ell\,;\,\left(\dfrac{1}{C_{\gamma,1}\,A_{1}^{1-\frac{1}{\gamma}}}\right)^{\frac{1}{\gamma}}\bigg\}=\min\Bigg\{\ell\,;\,\dfrac{\ell^{1-\frac{1}{\gamma^{2}}}}{C_{\gamma,1}^{\frac{1}{\gamma}}\,\mathbf{M}_{1+\gamma}^{\frac{\gamma-1}{\gamma^{2}}}}\Bigg\}\,.
\end{equation*}
We set $\ell \leq C_{\gamma,1}^{-{\gamma}}\,\mathbf{M}_{1+\gamma}^{1-\gamma}$ in order to get
\begin{equation*}
\min\bigg\{\ell\,;\,\bigg(\dfrac{1}{C_{\gamma,1}\,A_{1}^{1-\frac{1}{\gamma}}}\bigg)^{\frac{1}{\gamma}}\bigg\}=\ell\,,
\end{equation*}
and $M_{1}(\mathcal{F}_{s}(\mu_{0})) \geq \ell$ for any $s \geq 0$. Arguing as in the case $\gamma\in(0,1]$, this shows that $\mathcal{F}_{s}(\mathcal{Z}) \subset \mathcal{Z}$ for any $s \geq 0$ and, there exists a steady measure $\mub$ which is absolutely continuous with respect to the Lebesgue measure.
\section{Numerical simulations}\label{NS}
This section contains numerical simulations for the rescaled equation 
\begin{equation}
\label{eq:cheng2}
\partial_s g(s,\xi)-\tfrac{1}{2}\big(a^2+b^2-1\big)\partial_{\xi}\big(\xi\, g(s,\xi)\big)=\mathcal{Q}(g,g)(s,\xi), \qquad s\geq0\,,\;\; \xi \in \mathbb{R}
\end{equation}
where
$\mathcal{Q}(g,g)$ has been previously defined in \eqref{eq:cheng} and \eqref{eq:Qw}. We recall that such a model has been studied in \cite{CarrTo, pareschi} in the case of $\gamma=0$ and it admits a  \textit{unique} steady state
\begin{equation}
\label{eq:m1}
M_1(\xi)=\frac{2}{\pi} \left(\frac{1}{1+\xi^2} \right)^2,
\end{equation}
such that
\begin{equation*} \int_{\R} M_{1}(\xi)\,  \left(\begin{array}{c}1 \\ \xi \\\xi^{2}\end{array}\right)\d\xi=\left(\begin{array}{c}1 \\0 \\ 1\end{array}\right).
\end{equation*}
We will use the numerical solutions of \eqref{eq:cheng2} to verify the properties of our models for general values of $\gamma$. We shall consider here initial datum $g_{0}(\xi)=g(0,\xi)$ which shares the same first moments of $M_{1}$ i.e.
\begin{equation}
\label{eq:cond} \int_{\R} g_{0}(\xi)\,  \left(\begin{array}{c}1 \\ \xi \\ \xi^{2}\end{array}\right)\d\xi=\left(\begin{array}{c}1 \\0 \\ 1\end{array}\right).
\end{equation} The coefficient $c=-\tfrac{1}{2}\big(a^2+b^2-1\big)=ab >0$ in equation \eqref{eq:cheng2} is the only one that gives stationary self-similar profile with finite energy in the case $\gamma=0$, see \cite{CarrTo}.  In contrast, as already noticed, the case $\gamma>0$ accepts any arbitrary positive coefficient in the equation, thus, we will perform all numerical simulations with such coefficient for comparison purposes.  Recall that in our previous analysis we choose this coefficient to be $1$.  
\subsection{Numerical scheme}
To compute the solution, we have to make a technical assumption which is the truncation of $\mathbb{R}$ into a finite domain $\Omega=[-L, L]$. When $L$ is chosen large enough so that $g$ is machine zero at $\xi=\pm L$, this will not affect the quality of the solutions\footnote{Although we have no rigorous proof that the tails of $g$ will decay, at least, as fast as $Ae^{-a|\xi|^{\gamma}}$, this is expected.  For dimensions $d>1$, this is a well known fact for the elastic and inelastic Boltzmann equations.}.  Then, we use a discrete mesh consisting of $N$ cells as follows:
$$-L=\xi_{\frac{1}{2}} < \xi_{\frac{3}{2}}< \ldots < \xi_{N+\frac{1}{2}}=L\,.$$
We denote cell $I_j=\big(\xi_{j-\frac{1}{2}}, \xi_{j+\frac{1}{2}}\big)$, with cell center $\xi_j=\frac{1}{2}\big(\xi_{j-\frac{1}{2}}+\xi_{j+\frac{1}{2}}\big)$ and length $\Delta \xi_j=\xi_{j+\frac{1}{2}}- \xi_{j-\frac{1}{2}}$.
The scheme we use is  the discontinuous Galerkin (DG) method \cite{cockburn}, which has  excellent conservation properties. The DG schemes employ the approximation space defined by
$$V_h^k =\{v_h: v_h|_{I_j} \in P^k(I_j),\,1\leq j \leq N\},$$ where $P^k(I_j)$ denotes all polynomials of degree at most $k$ on $I_j$, and look for the numerical solution $g_h \in V_h^k$, such that 
\begin{multline}
\label{dg}
\int_{I_j} \partial_{s}g_h(s,\xi)\, v_h(\xi)\, \text{d}\xi+
\tfrac{1}{2}\big(a^2+b^2-1\big) \int_{I_j}\xi\, g_h(s,\xi)\, \partial_{\xi}v_h(\xi)\, \d\xi\\
+\tfrac{1}{2}\big(a^{2}+b^{2}-1\big)\left((\widehat{\xi{g}_h}\, v_h^+)_{j-\frac{1}{2}}-  (\widehat{\xi{g}_h}\, v_h^-)_{j+\frac{1}{2}} \right)\\
=\int_{I_j} \mathcal{Q}(g_h,g_h)(s,\xi)\, v_h(\xi)\, \text{d}\xi\,, \qquad j=1, \ldots, N
\end{multline}
holds true for any $v_h \in V_h^k$. In \eqref{dg}, $\widehat{\xi{g}_h}$ is the upwind numerical flux
\[ \widehat{\xi{g}_h} = \left\{ 
  \begin{array}{l l}
    \xi g_h^-(s,\xi) & \quad \text{if $\xi \ge 0$}\\
    \xi g_h^+(s,\xi) & \quad \text{if $\xi < 0$},
  \end{array}
\right.\]
where $g_h^-, g_h^+$ denote the left and right limits of $g_h$ at the cell interface.  Equation \eqref{dg} is in fact an ordinary differential equation for the coefficients of $g_h(s,\xi)$. The system can then be solved by a standard ODE integrator, and in this paper we use the third order TVD-Runge-Kutta methods \cite{shu} to evolve this method-of-lines ODE. Notice that the implementation of the collision term in \eqref{dg} is done by recalling \eqref{eq:Qw}, and we only need to calculate it for all the basis functions in $V_h^k$. This is done before the time evolution starts to save computational cost.

The DG method described above when $k\geq1$ (i.e. we use a scheme with at least piecewise linear polynomial space) will preserve mass and momentum up to discretization error from the boundary and numerical quadratures. This can be easily verified by using appropriate test functions $v_h$ in \eqref{dg}.  For example, if we take $v_h=1$ for any $j$, and sum up on $j$, we obtain
$$\int_{\Omega} \partial_{s}g_h\,\text{d}\xi=\frac{L}{2}\big(a^2+b^2-1\big) \left (g_h^-(\xi=L)+g_h^+(\xi=-L) \right ).$$
If $L$ is taken large enough so that $g_h$ achieves machine zero at $\pm L$, this implies mass conservation.  Similarly,  we can prove
$$\int_{\Omega} \partial_{s}g_h \, \xi\,\text{d}\xi=-\tfrac{1}{2}\big(a^2+b^2-1\big)\int_{\Omega} g_h\,  \xi\,\text{d}\xi+\frac{L^2}{2}(a^2+b^2-1)\left (g_h^-(\xi=L)-g_h^+(\xi=-L) \right ).$$
Again, when $L$ is large enough and the initial momentum is zero, this shows conservation of momentum for the numerical solution.  

\subsection{Discussion of numerical results}
We use as initial state the discontinuous initial profile

\[ g(0,\xi) = \left\{ 
  \begin{array}{l l}
    \frac{1}{2\sqrt{3}} & \quad \text{if $|\xi| \le \sqrt{3}$}\\
    0 & \quad \text{otherwise.}
  \end{array} \right.
\]
This profile clearly satisfies the moment conditions \eqref{eq:cond}.  We take the domain to be $[-40, 40]$ and use piecewise quadratic polynomials on a uniform mesh of size $2000$. Four sets of numerical results have been computed, corresponding to $(\gamma,a)=(1,0.1),\,(1,0.3),\,(1,0.5),\,(2,0.5)$, and $(3,0.5)$ respectively. The computation is stopped when the residual $$\sqrt{\int_{\Omega} \left (\frac{g_h(s^{n+1},\xi)-g_h(s^n,\xi)}{\Delta s} \right )^2 \text{d}\xi\,}$$ reduces to a threshold below $10^{-4}$ indicating convergence to a steady state.

In Figure \ref{fig:eq} we plot the objects of study in this document, that is, the equilibrium solutions for different values of $\gamma$. In this plot, the amplitude of the solutions has been normalized to one at the origin for comparison purposes.  The numerical solutions are used for the cases $\gamma=1, 2, 3$, while for $\gamma=0$, we use the theoretical equilibrium $M_1$ as defined in \eqref{eq:m1}.  In general terms, these smooth patterns are expected with exponential tails happening for any $\gamma>0$.  The behavior of the profiles at the origin is quite subtle and will depend non linearly on the potential, for instance, the case $\gamma=2$ renders a wider profile relative to $\gamma=0$ in contrast to $\gamma=1$ or $\gamma=3$.  This is not to say that such behavior is discontinuous with respect to $\gamma$, it is simply the net result of the contributions of short and long range interactions of the particles in equilibrium. 

In Figure \ref{fig:eqa}, we fix $\gamma=1$ and compare the stationary solution for different values of $a$.  Recall that the parameter $a$ measures the ``inelasticity'' degree of the system with $a=0$ being elastic particles and with $a=0.5$ being sticky particles.  As expected, smaller values of $a$ will render a wider distribution profile at the origin keeping the tails unchanged.  Near the origin, the distribution of particles for less inelastic systems will be underpopulated relative to more inelastic systems which force particles to a more concentrated state.  Tails, however, are more dependent to the growth of the potential and should remain relatively unchanged despite changes in inelasticity.  Interestingly, the numerical simulation shows a unexpected effect:  the maximum density of particles is not necessarily located at the origin.

\begin{figure}[!htbp]
\centering
\includegraphics[width=0.65\textwidth]{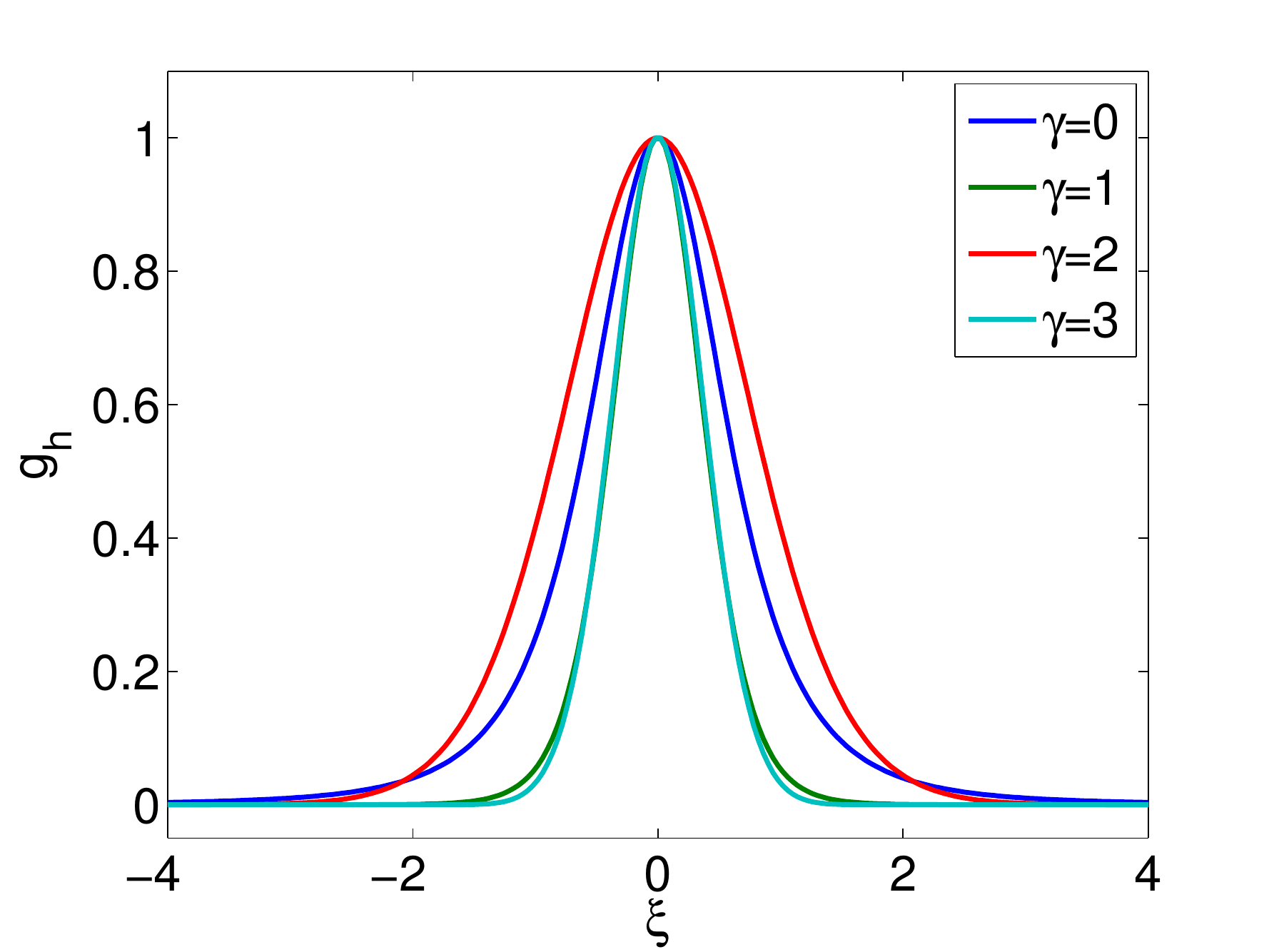}
\caption{Rescaled equilibrium solutions for different values of $\gamma$ with $a=0.5$ (sticky particles).  Curves corresponding to $\gamma=1, 2, 3$  are computed numerically, while the curve for $\gamma=0$ is obtained from the known steady state $M_1$ in \eqref{eq:m1}.}
\label{fig:eq}
\end{figure}

\begin{figure}[!htbp]
\centering
\includegraphics[width=0.65\textwidth]{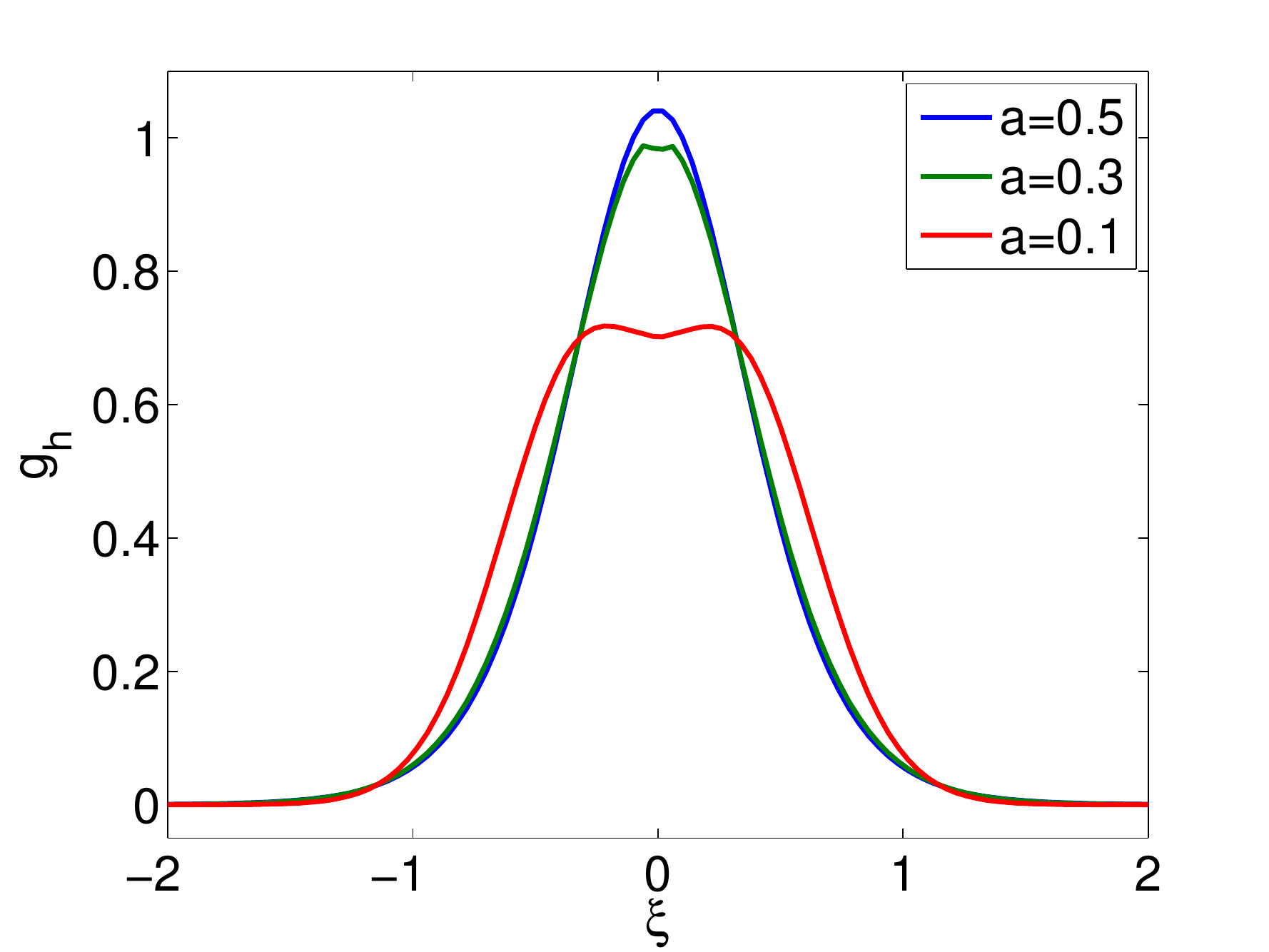}
\caption{Equilibrium solutions for different values of inelasticity $a$ when $\gamma=1$.}
\label{fig:eqa}
\end{figure}

In Figure \ref{fig:ene}, we plot the evolution of energy as a function of time in a system of sticky particles $a=0.5$ using different values of $\gamma$.  Changes in the relaxation times are expected since the potential growth $\gamma$ impacts directly on the spectral gap of the linearized interaction operator.  This numerical result seems to confirm, in one dimension, the natural idea that higher $\gamma$ implies higher spectral gap, hence, faster relaxation to equilibrium.  Refer to \cite{MiMo2} for ample discussion in higher dimensions for the so called quasi-elastic regime.  Additionally, the results of Figure \ref{fig:ene} are numerical confirmation of the optimal cooling rate given in our Theorems \ref{theo:decmom} and \ref{theo:decmoms}.

\begin{figure}[!htbp]
\centering
\includegraphics[width=0.65\textwidth]{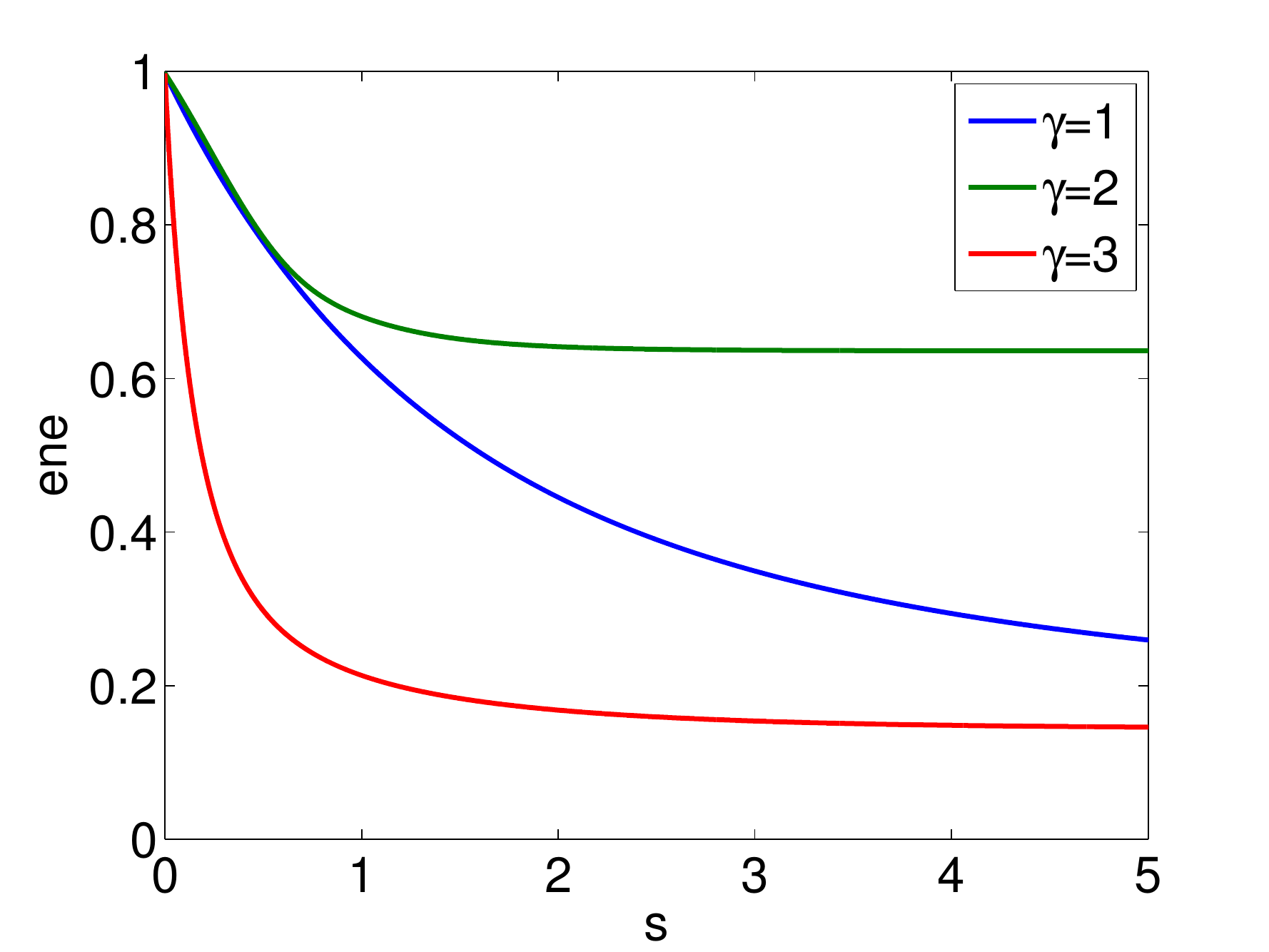}
\caption{Evolution of energy for $\gamma=1, 2, 3$, when $a=0.5$. }
\label{fig:ene}
\end{figure}

In Figure \ref{fig:soln}, we investigate the evolution of the distribution function and its discontinuities for the case $\gamma=1$ and $a=0.5$.  The simulation shows, in our one dimension setting, a well stablished phenomena happening in elastic and quasi-elastic Boltzmann equation in higher dimensions:  discontinuities are damped at exponential rate \cite{MiMo2}.  As a consequence, points of low regularity which are contributed by $\mathcal{Q}^{+}(g,g)$ due to such discontinuities will be smoothed out exponentially fast as well.  This is the case for the point $\xi=0$ in this particular simulation.  A numerical simulation was also performed using an initial Gaussian profile (not included).  Both numerical simulations showed an evolution towards the same equilibrium profile which confort us in the belief that the constructed self-similar profile is unique.

\begin{figure}[!htbp]
\centering
\includegraphics[width=0.65\textwidth]{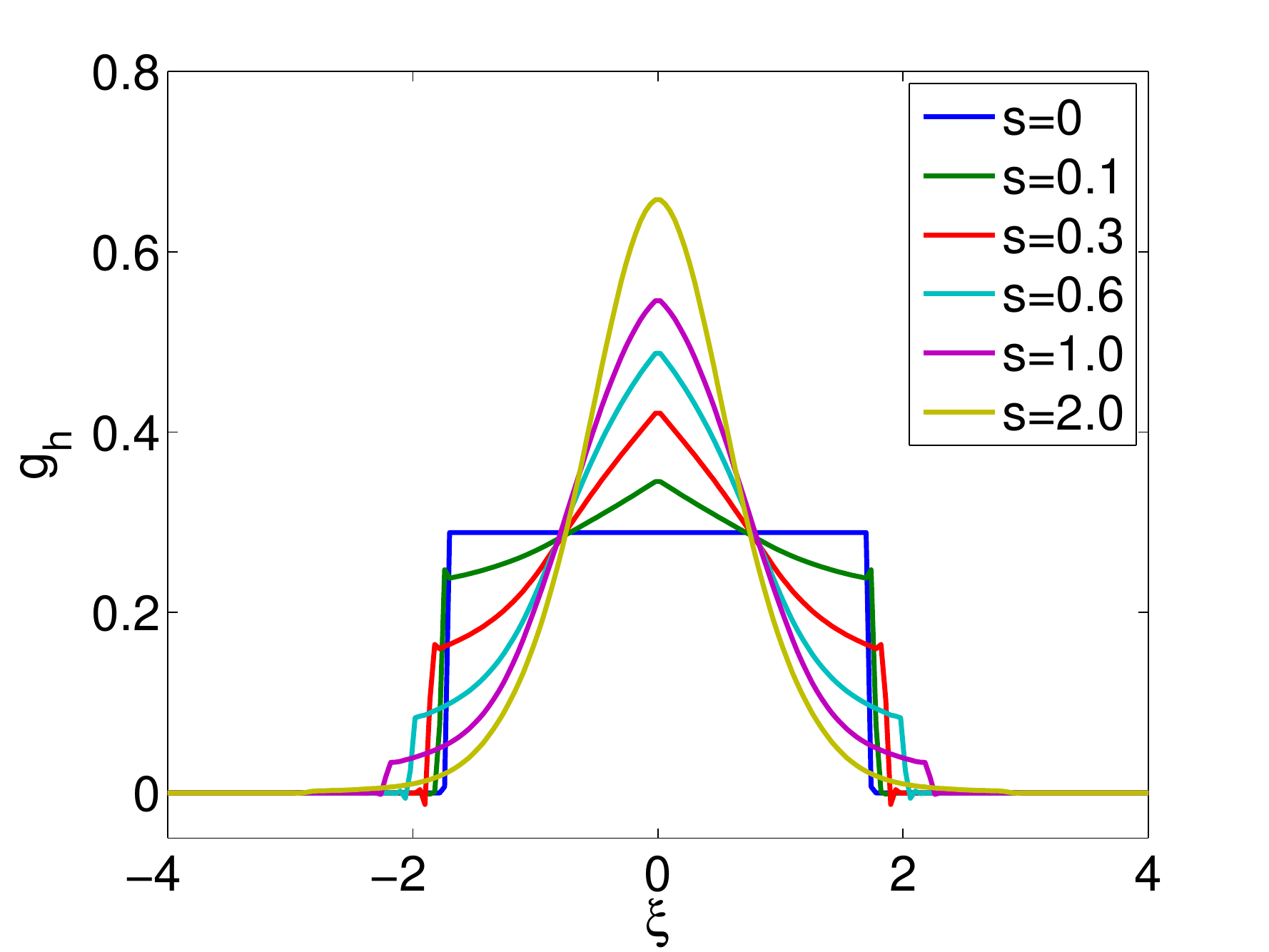}
\caption{Evolution of $g(s,\xi)$ when $\gamma=1, a=0.5$. }
\label{fig:soln}
\end{figure}
Finally, we verify the performance of our scheme by plotting the distribution's mass, momentum and energy.  Only the plot for $\gamma=1, a=0.5$ is shown since the other cases display similar accuracy.    In Figure \ref{figure_g1}  we plot the evolution of mass, momentum, energy and residual (in the log scale). The decay of residual shows convergence to steady state, while mass and momentum are preserved up to 10 digits of accuracy verifying the performance of the DG method. 

\begin{figure}[!htbp]
\centering
\subfigure{\includegraphics[width=0.45\textwidth]{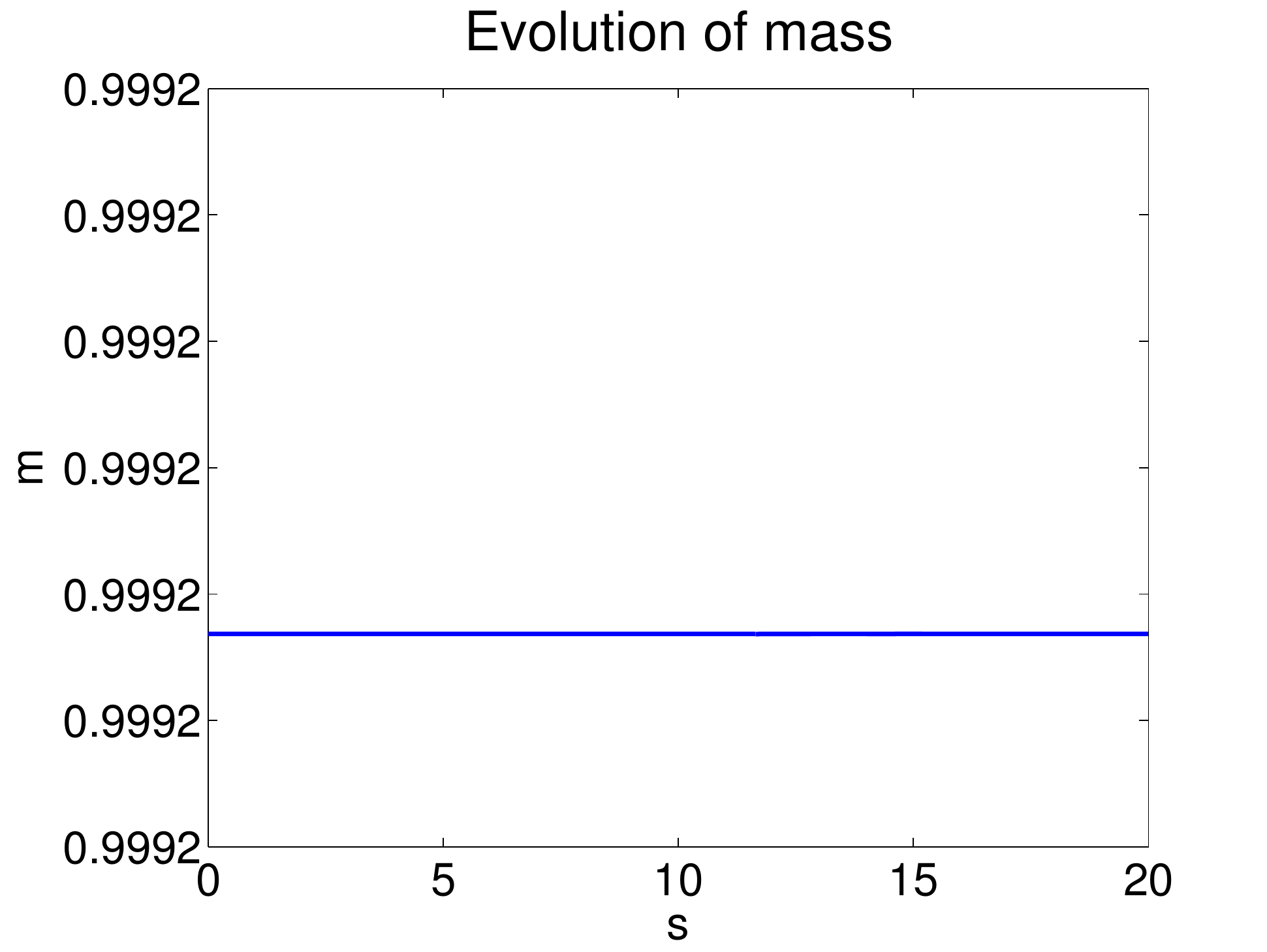}}
\subfigure{\includegraphics[width=0.45\textwidth]{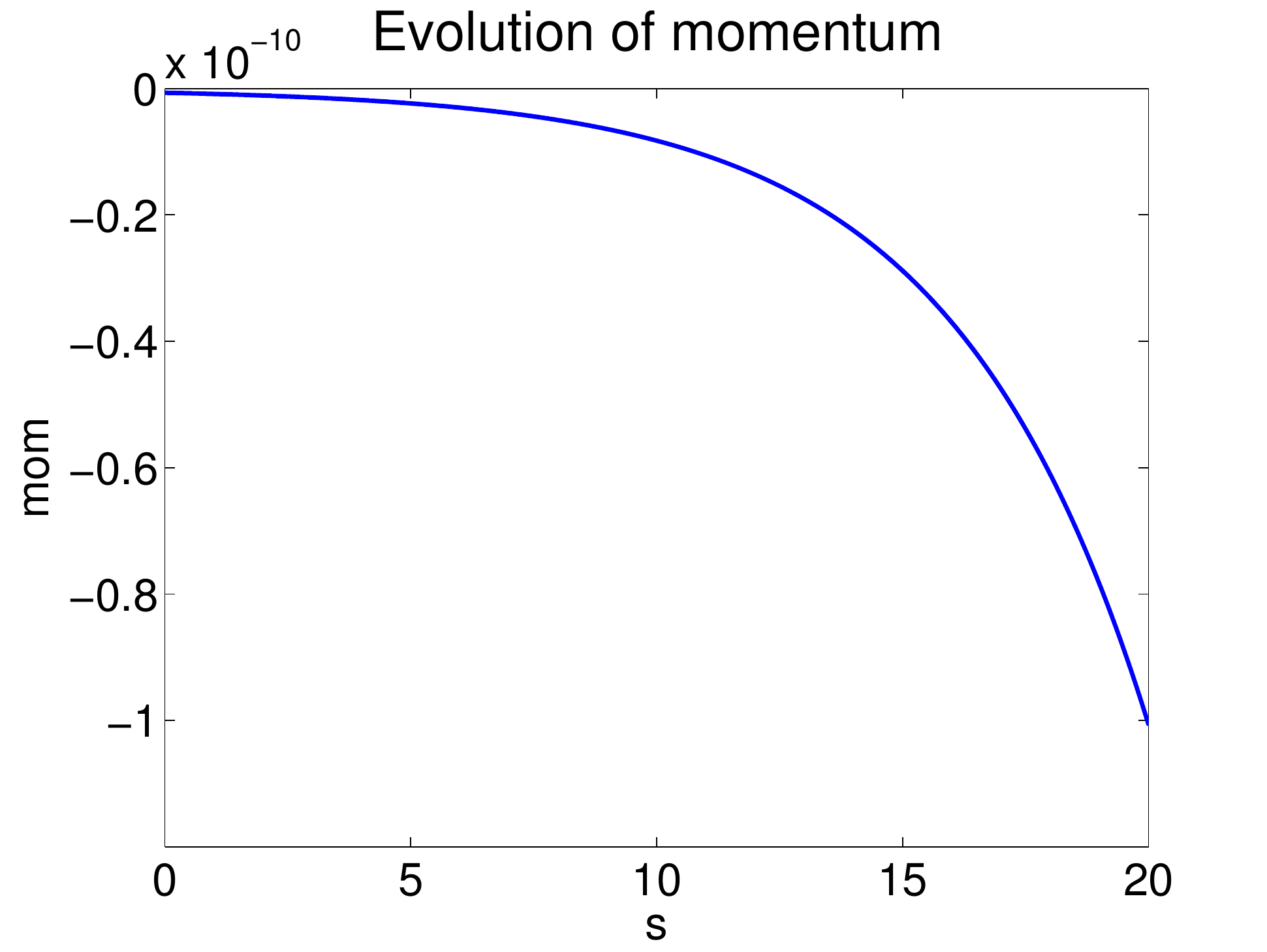}}
\subfigure{\includegraphics[width=0.45\textwidth]{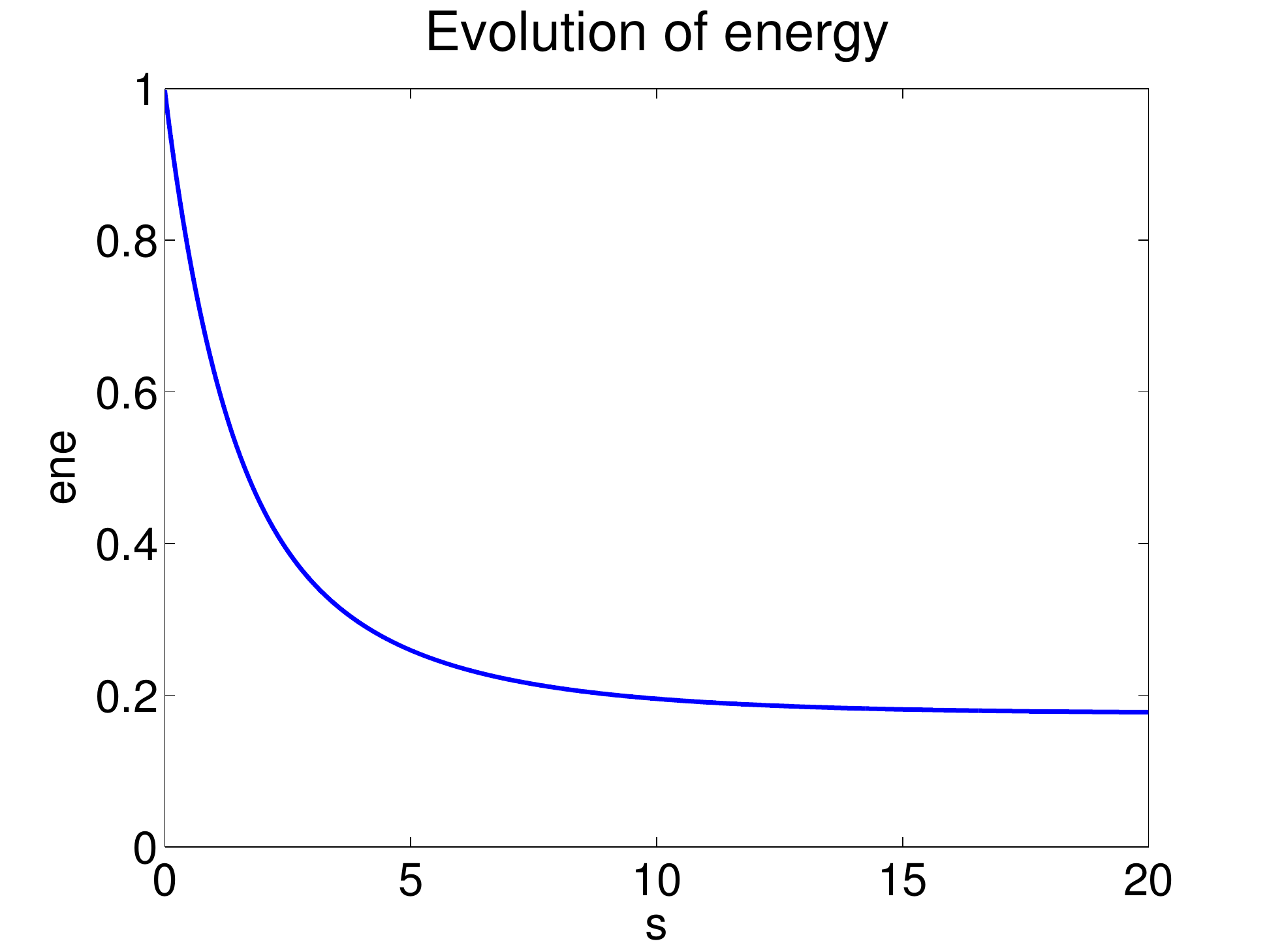}}
\subfigure{\includegraphics[width=0.45\textwidth]{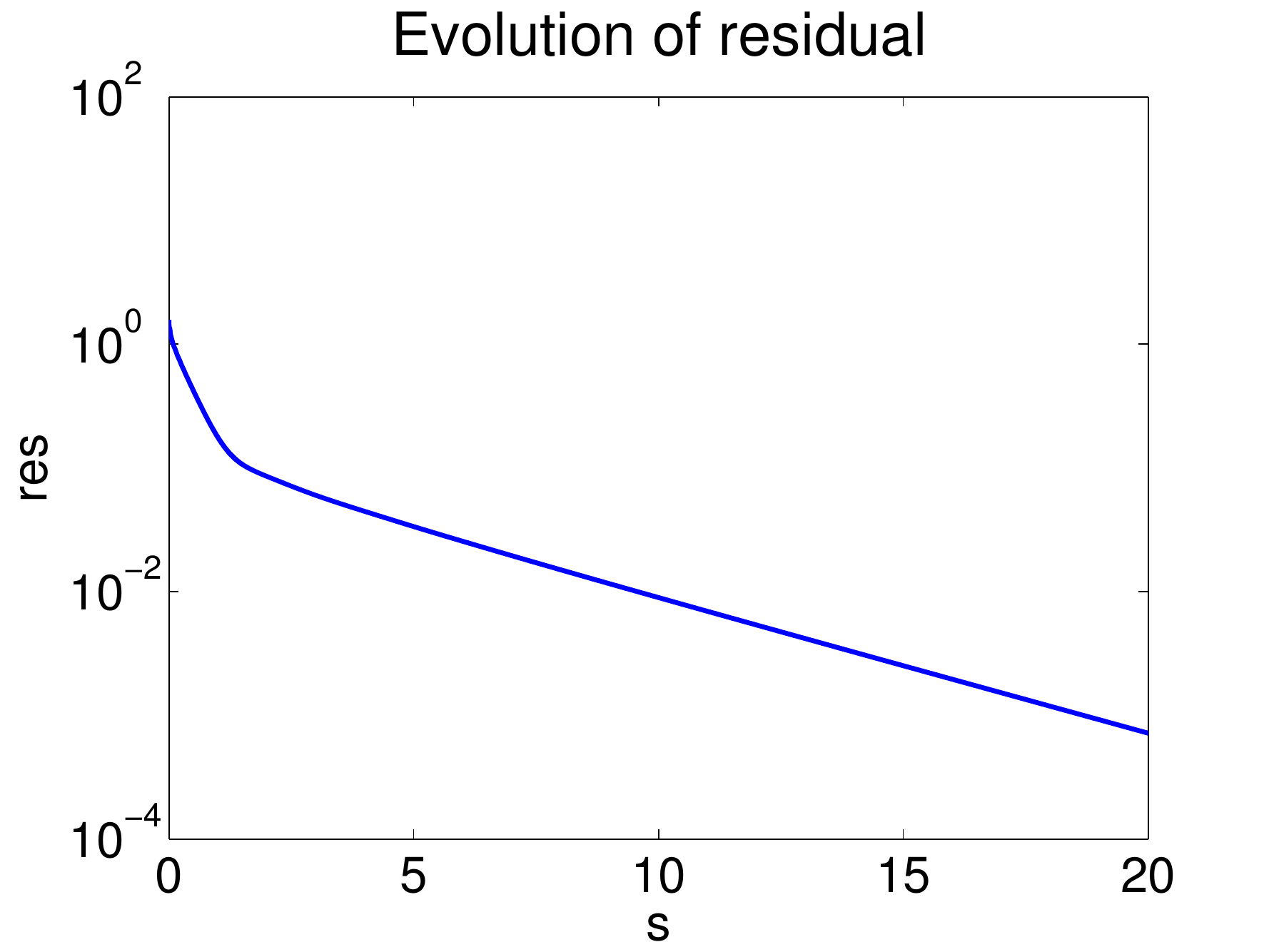}}
\caption{$\gamma=1$, $a=0.5$, evolution of mass, momentum, energy and residual. }
\label{figure_g1}
\end{figure}

\section{Conclusion and perspectives}
In the present paper we studied the large time behavior of the solution to the dissipative Boltzmann equation in one dimension.  The main achievement of the document is threefold: (1) give a proof for the well-posedness of such problem in the measure setting, (2) provide a careful study of the moments, including the optimal rate of convergence of solutions towards the Dirac mass at $0$ in Wasserstein metric, and (3) prove the existence of ``physical'' steady solutions in the self-similar variables, that is, steady measure solutions that are in fact absolutely continuous with respect to the Lebesgue measure.  Let us make a few comments about the perspectives and related open problems.
\subsection{Regularity propagation for inelastic Boltzmann in 1-D}  The numerical simulations performed in Section \ref{NS} seem to confirm that many of the known results given for inelastic Boltzmann in higher dimensions should extend to inelastic Boltzmann in 1-D, at least, under suitable conditions.  More specifically, rigorous results about propagation of Lebesgue and Sobolev norms, and exponential attenuation of discontinuities for the time evolution problem should hold.  Similarly, the study of optimal regularity for the stationary problem is an interesting aspect of the equation which is unknown. 
\subsection{Alternative approach \textit{à la} Fournier-Lauren\c cot} Exploiting the analogy between \eqref{gs} and the self-similar Smoluchowski's equation, one may wonder if the approach performed by N. Fournier \& Ph. Lauren\c cot in \cite{Fournier} can be adapted to \eqref{steady}.  We recall that the approach in \cite{Fournier} consists in finding a suitable discrete approximation of the steady problem for which a discrete steady solution can be constructed. If such discrete solution exhibits all the desired properties (positivity, uniform upper bounds and suitable lower bounds) uniformly with respect to the discretization parameter, then, one can pass to the limit to obtain the desired steady solution to \eqref{steady}.  Such approach fully exploits the 1-D feature of the problem.  Besides, it does not resort to the evolution equation \eqref{gs}, fact that makes it very elegant. The main contrast with respect to \cite{Fournier} lies in the fact that no estimates for moments of \emph{negative} order seem available for our problem.  Moreover, Smoluchowski's equation is such that the collision-like operator sends mass to infinity while the drift term brings it back to zero.  The model \eqref{steady} has the opposite behavior: the collision tends to concentrate mass in zero while the drift term sends it to infinity.
\subsection{Uniqueness and stability of the self-similar profile} Now that the existence of a steady solution to \eqref{steady} has been settled, the next challenge is to prove that such self-similar profile is unique and that it attracts solutions to \eqref{gs} as $s \to \infty$ or, at least, to find conditions for this to hold.  This is certainly the case in the simulations performed in Section \ref{NS} which show, in addition, exponential rate of attraction.  For the 3-D inelastic Boltzmann equation, such a result has been proven in \cite{MiMo2} in the so-called weakly inelastic regime (a perturbation of the elastic problem).  Since the 1-D Boltzmann equation is meaningless for elastic interactions a perturbative approach seems inadequate.  Once a uniqueness theory is at hand, it would be desirable to obtain rate of convergence, see for instance \cite{AL3}.  This would render a more complete picture of the large time behavior of the dissipative Boltzmann equation on the line.
\subsection{The rod alignment problem by Aranson and Tsimring}\label{sec:intro-align}
Aranson and Tsimring  in \cite{AT} have introduced the following model for rod alignment (the rods have distinguishable beginning and end)
\begin{equation}\label{eq:rod}
\partial_{t}P(t,\theta)=\ds\int^{\pi}_{-\pi}P\Big(t,\theta - \frac{\theta_*}{2}\Big)\,P\Big(t,\theta+\frac{\theta_*}{2}\Big)\,\big|\theta_*\big|^{\gamma}\,\d\theta_*  - P(t,\theta)\,\ds\int^{\pi}_{-\pi}P(t,\theta + \theta_*)\,\big|\theta_*\big|^{\gamma}\,\d\theta_{*}
\end{equation}
having initial condition $P(0)=P_{0}$, angle $\theta\in(-\pi,\pi)$.  The authors introduced the model for Maxwellian interactions $\gamma=0$, yet, the model is sound for any $\gamma\geq0$.  We refer to \cite{Ben-Naim,carlen} for other variations of such model.  Here $P(t,\theta)$ is the time--distribution of rods having orientation $\theta\in[-\pi,\pi)$.  Equation \eqref{eq:rod} models a system of many discrete rods aligning by the pairwise  irreversible law
\begin{equation}\label{il}
\big(\theta-\tfrac{\theta_*}{2},\theta+\tfrac{\theta_*}{2})\rightarrow\big(\theta,\theta\big).
\end{equation}
Let us explain the interaction law \eqref{il}.  We start by fixing a horizontal frame and picking two interacting rods with orientation $\theta_{1},\theta_{2}\in[-\pi,\pi)$.  Define $\theta_* \in [-\pi,\pi)$ as the angle between the ends of the rods.  Bisect the rods and define $\theta\in[-\pi,\pi)$ as the angle between the horizontal frame and the bisecting line.  Thus, we can express the rods orientation, up to modulo $2\pi$, by the relation $\theta_{1} = \theta-\tfrac{\theta_*}{2}$ and $\theta_2 = \theta+\tfrac{\theta_*}{2}$ with respect to the horizontal frame.   After interaction, both rods will align with the bisection angle $\theta$.  This law produces the alignment of rods, we refer to \cite{AT, Ben-Naim} for an interesting discussion and simulations.  The law \eqref{il} can be written in terms of the rod orientations $\theta_{1},\,\theta_{2}\in[-\pi,\pi)$ as
\begin{equation}\label{ila}
\big(\theta_{1},\theta_{2}\big)\rightarrow
\Bigg\{
\begin{array}{cc}
\big(\tfrac{\theta_{1}+\theta_2}{2},\tfrac{\theta_{1}+\theta_2}{2}\big) & \big|\theta_{1} - \theta_{2} \big| \leq \pi,\\
\big(\tfrac{\theta_{1}+\theta_2}{2}+\pi,\tfrac{\theta_{1}+\theta_2}{2}+\pi\big) & \big|\theta_{1} - \theta_{2} \big| > \pi.
\end{array}
\end{equation}
Note that in the case $\big|\theta_{1} - \theta_{2} \big| > \pi$ the addition of $\pi$ is needed since we chose the alignment to occur in the direction of the bisecting angle associated to the ends of the rods (as opposed to the beginnings of the rods).  The interaction law \eqref{ila} is discontinuous, thus, intuitively we understand that model \eqref{eq:rod} will not have conservation of momentum because there is a choice of alignment direction.  Let us fix this by considering an initial datum $P_{0}$ with compact support in $(-\pi/2,\pi/2)$
\begin{equation*}
\mathrm{Supp} P_{0} \subset (-\pi/2,\pi/2)\,.
\end{equation*}
Such a property is conserved by the dynamic of \eqref{eq:rod} and it corresponds to a system of rods where \textit{rod's beginning and end are indistinguishable}, thus, we can always assign an angle $\theta\in(-\pi/2,\pi/2)$ to each rod.  For such a model, the weak-formulation is very similar to that of \eqref{eq:1} except for the fact that all integrals are considered now over the finite interval $(-\pi/2,\pi/2)$.  For this reason, the decay of the moments of the solution $P(t)$ to \eqref{eq:rod} is identical to that of \eqref{eq:1}.  Consequently, this translates into the convergence of $P(t)$ towards a Dirac mass centered at $0$ as $t \to \infty$ in the Wasserstein metric.  The question is to understand the model after self-similar rescaling where the support of solutions is no longer fixed and given by $\big(-V(t)\pi/2,V(t)\pi/2\big)\rightarrow(-\infty,\infty)$ as $t\rightarrow\infty$.  Thus, it is natural to expect that the self-similar solution to \eqref{eq:rod} will converge towards the steady solution to \eqref{steady}.
\subsection{Extension to other collision-like problems} It seems that the present approach is robust enough to be applied to various contexts. In particular, the argument may be helpful to tackle notoriously difficult questions, such as, the existence of a stationary self-similar solution to the Smoluchowski equation with \textit{ballistic kernel interactions}.  It may be possible, also, to give a more natural treatment of the stationary inelastic Boltzmann equation in the framework of probability measures.  The difficulty will be to find a dynamical stable set in order to apply the dynamical fixed point and a suitable regularization theory for the stationary equation of the particular problem.
\appendix
\section{Cauchy theory in both the $L^{1}$-context and the measure setting}\label{sec:cauchyA}
In this Appendix, we give a detailed proof of the existence and stability estimates of Section \ref{sec:steps} yielding to Theorem \ref{theo:cauchy}. We fix here $\gamma > 0$ and set $\gmm=\max(\gamma,2).$ We begin with an existence and uniqueness result for the Cauchy problem \eqref{eq:1} in the special case in which the initial datum $\mu_{0}$ is absolutely continuous with respect to the Lebesgue measure, i.e.
\begin{equation*}
\mu_{0}(\d x)=f_{0}(x)\d x\,.
\end{equation*}
\begin{theo}\label{theo:L1}
Fix $\delta>0$. Let a nonnegative $f_{0}\in L^1_{2+\gamma+\delta}(\R)$  be given with $f_{0} \neq 0$. 
Setting $\mu_{0}(\d x)=f_{0}(x)\d x$, there exists a \emph{unique} family $(f_{t})_{t \geq 0} \subset  L^{1}_{2+\gamma+\delta}(\R)$ such that $\mu_{t}(\d x)=f_{t}(x)\d x$ is a weak measure solution to \eqref{eq:1} associated to $\mu_{0}$.  Moreover,
\begin{align}\label{cons_L1}
\begin{split}
\int_{\R} &f_{t}(x)\d x=\int_{\R} f_{0}(x)\d x, \qquad \int_{\R} x f_{t}(x)\d x=\int_{\R} xf_{0}(x)\d x,\\
&\text{and} \quad \int_{\R} |x|^{2} f_{t}(x)\d x\leq \int_{\R} |x|^{2} f_{0}(x)\d x \qquad \forall\, t \geq 0\,.
\end{split}
\end{align}
In addition to this, if one assumes that 
$$f_{0} \in  \bigcap_{k \geq 0} L^{1}_{k}(\R)\,,$$
then $(f_{t})_{t \geq 0} \subset  \bigcap_{k \geq 0} L^{1}_{k}(\R)$.  
\end{theo}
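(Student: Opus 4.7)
The strategy is classical for Boltzmann-type equations: recast \eqref{eq:1} in mild form, run a Picard iteration in $L^1_{2+\gamma+\delta}(\R)$ for local existence, then extend globally via the a priori moment bounds of Proposition \ref{prop:decrease}. Split $\Q(f,f) = \Q^+(f,f) - f\,L[f]$ with
\begin{equation*}
\Q^+(f,f)(x) := \int_{\R} f(x-\a y)\,f(x+\b y)\,|y|^\gamma\,\d y, \qquad L[f](x) := \int_{\R} f(x+y)\,|y|^\gamma\,\d y.
\end{equation*}
The substitution $u=x-\a y$, $v=x+\b y$ (Jacobian $1$) in $\Q^+$, combined with the elementary bounds $\la \b u + \a v \ra^k \leq C_k(\la u \ra^k + \la v \ra^k)$ and $|v-u|^\gamma \leq 2^\gamma(\la u\ra^\gamma + \la v\ra^\gamma)$, yields for every $k\geq 0$ the bilinear estimate
\begin{equation*}
\|\Q^+(f,g)\|_{L^1_k} + \|f\,L[g]\|_{L^1_k} \leq C_{\a,k}\,\|f\|_{L^1_{k+\gamma}}\,\|g\|_{L^1_{k+\gamma}}.
\end{equation*}
Duhamel's formula then recasts \eqref{eq:1} as
\begin{equation*}
f_t(x) = f_0(x)\,\mathrm{e}^{-\int_0^t L[f_\tau](x)\,\d\tau} + \int_0^t \mathrm{e}^{-\int_s^t L[f_\tau](x)\,\d\tau}\,\Q^+(f_s,f_s)(x)\,\d s,
\end{equation*}
which manifestly preserves nonnegativity.

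I would fix $R > \|f_0\|_{L^1_{2+\gamma+\delta}}$ and apply Banach's fixed point theorem to the Duhamel map on the closed nonnegative ball of radius $R$ in $\C\bigl([0,T];\,L^1_{2+\gamma+\delta}(\R)\bigr)$ for $T=T(R,\gamma,\delta)>0$ small enough. The bilinear estimate with $k=2+\gamma+\delta$, combined with the pointwise bound $\mathrm{e}^{-\int L[f_\tau](x)\,\d\tau} \leq 1$, delivers both invariance and contractivity. Globalization then follows from Proposition \ref{prop:decrease} together with Remark \ref{nb:gene-rho} (after translating to zero mean and rescaling to unit mass if desired): these give a uniform-in-time bound on $M_{2+\gamma+\delta}(f_t)$, equivalent by mass conservation to a uniform bound on $\|f_t\|_{L^1_{2+\gamma+\delta}}$, ruling out finite-time blow-up. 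The conservation laws \eqref{cons_L1} are obtained by plugging $\psi \in \{1,\,x,\,|x|^2\}$ into the weak formulation \eqref{eq:Qw}.

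Uniqueness on an arbitrary time interval $[0,T]$ is obtained by applying the same bilinear estimate to the difference $f-g$ of two solutions and invoking Gronwall's lemma in
\begin{equation*}
\|f_t - g_t\|_{L^1_{2+\gamma+\delta}} \leq C_T \int_0^t \|f_s - g_s\|_{L^1_{2+\gamma+\delta}}\,\d s,
\end{equation*}
with $C_T$ controlled by $\sup_{[0,T]}\bigl(\|f_s\|_{L^1_{2+\gamma+\delta}}+\|g_s\|_{L^1_{2+\gamma+\delta}}\bigr)$. Propagation of all algebraic moments when $f_0 \in \bigcap_{k\geq 0}L^1_k(\R)$ follows by induction on $k$: the closed differential inequality $\tfrac{\d}{\d t}M_p(f_t) \leq -c_p\,M_p(f_t)^{1+\gamma/p}$ from Proposition \ref{prop:decrease} propagates the $p$-moment, and the bilinear estimate transfers this bound into the $L^1_p$-setting. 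The main technical hurdle is precisely the weighted bilinear estimate for $\Q^+$: the factor $|y|^\gamma$ inside the integrand must be absorbed into the weight $\la x \ra^{k+\gamma}$ after the Jacobian-preserving substitution, which is exactly why the right-hand side loses an extra $\gamma$ in the moment order; everything else is a standard Picard-iteration plus Gronwall argument powered by the moment machinery already developed in Section~\ref{sec:cauchy}.
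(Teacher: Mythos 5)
Your proposed Picard/Duhamel approach does not close, and the gap is exactly the moment-loss issue you flag at the end. The bilinear estimate $\|\Q^{+}(f,g)\|_{L^1_k}\leq C\|f\|_{L^1_{k+\gamma}}\|g\|_{L^1_{k+\gamma}}$ sends $L^1_{k+\gamma}$ to $L^1_k$, not $L^1_k$ to $L^1_k$. If you only use $\mathrm{e}^{-\int L}\leq 1$, the Duhamel estimate reads $\|f_t\|_{L^1_{2+\gamma+\delta}}\leq\|f_0\|_{L^1_{2+\gamma+\delta}}+C\int_0^t\|f_s\|_{L^1_{2+2\gamma+\delta}}^2\,\d s$, which needs $2\gamma$ extra moments that are not available in the ball you work in; invariance fails. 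The same loss kills the Gronwall uniqueness display: the constant $C_T$ is not controlled by $\sup_{[0,T]}\|f_s\|_{L^1_{2+\gamma+\delta}}$ alone. Salvaging the damping of $\mathrm{e}^{-\int L}$ would require a pointwise lower bound $L[f_\tau](x)\gtrsim|x|^\gamma-1$, which is not at hand for a general element of your ball. Even leaving all that aside, the paper's Hölder-continuity computation shows that on the relevant set the map $f\mapsto\Q(f,f)$ is only H\"older continuous in $L^1_2$ with exponent $\delta/(\gamma+\delta)<1$; it is not Lipschitz in any space you would naturally try, so Banach contraction cannot run.

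The paper's actual route is fundamentally different and is designed precisely to sidestep this. It works on the convex set $\Omega_{K,\delta}$ of nonnegative densities with fixed mass and $(2+\gamma+\delta)$-moment at most $K$ (invariant by the inequality \eqref{elem}, which makes $M_{2+\gamma+\delta}$ nonincreasing), and invokes Martin's abstract evolution theorem. The hypotheses verified are: (i) H\"older, not Lipschitz, continuity of $f\mapsto\Q(f,f)$ from $\Omega_{K,\delta}$ to $L^1_2$; (ii) a \emph{one-sided} Lipschitz estimate $[f-g,\,\Q(f,f)-\Q(g,g)]_-\leq L\|f-g\|_{L^1_2}$, which is the crucial point — unlike a two-sided estimate, it exploits the negative loss term $-\tfrac12\iint|(f-g)(x)|(f+g)(y)|x-y|^\gamma(1+x^2)$ to cancel the high-moment contributions of the gain term, so there is no $\gamma$-loss; and (iii) a sub-tangent condition obtained by truncating $f$ at radius $R$, which is what lets Martin's theorem produce a global flow on $\Omega_{K,\delta}$. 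Your proposal contains none of (ii)--(iii), and (ii) in particular is not a refinement of your Gronwall step but a structurally different cancellation argument. If you want to pursue a fixed-point route instead, you would have to first truncate the kernel $|y|^\gamma\mapsto\min(|y|^\gamma,n)$, solve the truncated problem, and pass to the limit using the propagated moment bounds — a genuinely different (and longer) construction than the one sketched.

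Two secondary remarks. Your bilinear bound as written is itself not quite right: after the change of variables one gets cross-terms of the shape $\|f\|_{L^1_{k+\gamma}}\|g\|_{L^1}+\|f\|_{L^1}\|g\|_{L^1_{k+\gamma}}$ rather than a product with both factors in $L^1_{k+\gamma}$; this does not repair the closure problem, but it is the estimate you should carry. Finally, the propagation of all algebraic moments when $f_0\in\bigcap_k L^1_k$ is indeed obtained as you say, from \eqref{elem} and conservation of mass, and that part of your plan matches the paper.
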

\begin{proof}
We follow here the approach of some unpublished notes of  Bressan \cite{bressan}. For $K>0$ and $\delta>0$, we introduce 
$$\Omega_{K,\delta}= \left\{0\leq f\in L^1(\R), \quad \int_\R f(x)\, \d x= 
 \int_\R f_0(x)\, \d x, \quad \int_\R f(x) |x|^{2+\gamma+\delta}\d x\leq K\right\}\,.$$
For $f\in\Omega_{K,\delta}$, changing variables in the collision operator leads to 
\begin{align*}
\|\Q(f,f)\|_{L^1_2} \leq \int_\R & \int_\R  f(x)f(y)|x-y|^\gamma \left(1+(bx+ay)^2\right)\d x\, \d y\\
& + \int_\R \int_\R f(x)f(y)|x-y|^\gamma \left(1+x^2\right)\d x\, \d y\,.
\end{align*}
Now, for any $(x,y)\in\R^2$, 
$$|x-y|^\gamma \leq C_\gamma \big(|x|^\gamma+|y|^\gamma\big)$$ 
with $C_{\gamma}=\max\{1;2^{\gamma-1}\}$. Therefore, one observes that
$$
\|\Q(f,f)\|_{L^{1}_{2}} \leq 4C_{\gamma}\,M_{\gamma}(f)M_{0}(f) + 2C_{\gamma}\,M_{2}(f)\,M_{\gamma}(f)+2C_{\gamma}\,M_{2+\gamma}(f) M_{0}(f)\,,
$$
where $M_{k}(f)=\int_{\R}f(x)|x|^{k}\d x$ for any $k \geq 0$.  Notice that $M_{0}(f)=\|f_{0}\|_{L^{1}}$.  Using Young's inequality, for any $0 < k < 2+\gamma +\delta$ one has 
$M_{k}(f) \leq K+\|f_{0}\|_{L^{1}}$ from which we deduce that there exists $C > 0$ such that
$$\|\Q(f,f)\|_{L^1_2}  \leq  C\left(\|f_0\|_{L^1}^2 +K^2\right)\,.$$
Consequently, $\Q(f,f)\in L^1_2(\R)$. Let us now prove that the restriction to $\Omega_{K,\delta}$ of the mapping $f \mapsto \Q(f,f)$ is H\"older continuous. For $f,g\in\Omega_{K,\delta}$, 
\begin{align*}
\|\Q(f,f)-\Q(g,g)\|_{L^1_2}  \leq   &\int_\R  \int_\R |(f-g)(x)|f(y)|x-y|^\gamma (1+(bx+ay)^2)\d x\, \d y\\
&+ \int_\R \int_\R g(x)|(f-g)(y)||x-y|^\gamma (1+(bx+ay)^2)\d x\, \d y\\
&+  \int_\R \int_\R |(f-g)(x)|f(y)|x-y|^\gamma \left(1+x^2\right)\d x\, \d y\\
&+  \int_\R \int_\R g(x)|(f-g)(y)||x-y|^\gamma \left(1+x^2\right)\d x\, \d y\,.
\end{align*}
Proceeding as previously, one notices that there exists $C > 0$ such that  
$$\|\Q(f,f)-\Q(g,g)\|_{L^1_2} \leq C( \|f_0\|_{L^1}+K) \|f-g\|_{L^1_2}
+ 2C_\gamma \|f_0\|_{L^1}\int_{\R}  |(f-g)(x)| |x|^{2+\gamma}\d x\,.$$
Thanks to the H\"older inequality, we have 
\begin{align*}
\int_{\R}  |(f-g)(x)| |x|^{2+\gamma}\d x & \leq \left(\int_{\R}  |(f-g)(x)| |x|^{2+\gamma+\delta}\d x\right)^{\frac{\gamma}{\gamma+\delta}}  \left(\int_{\R}  |(f-g)(x)| |x|^{2}\d x\right)^{\frac{\delta}{\gamma+\delta}} \\
& \leq (2K)^{\frac{\gamma}{\gamma+\delta}} \|f-g\|_{L^1_2}^{\frac{\delta}{\gamma+\delta}}\,.
\end{align*}
Combining the previous two inequalities, we deduce that the mapping $f\mapsto \Q(f,f)$ is uniformly H\"older continuous on $L^1_2(\R)$ when restricted to $\Omega_{K,\delta}$.  Let us look for a one-sided Lipschitz condition.  For $f,g\in L^1_2(\R)$, we introduce 
$$ \big[f,g\big]_-=\lim_{s\to 0^-}\frac{\|f+sg\|_{L^1_2} -\|f\|_{L^1_2}}{s}\,.$$
The dominated convergence theorem implies that 
$$\big[f,g\big]_-\leq \int_{\R} \mbox{sign}(f(x)) g(x)(1+x^2)\, \d x\,.$$
Our aim is to show that there exists a constant $L>0$ such that for any $f,g\in \Omega_{K,\delta}$, 
$$\big[f-g, \Q(f,f)-\Q(g,g)\big]_-\leq L \|f-g\|_{L^1_2}\,.$$
But, 
\begin{align*} 
\int_{\R} \mbox{sign}\big((f-g)&(x)\big)\big(\Q(f,f)-\Q(g,g)\big)(x)(1+x^2)\, \d x \\
&\hspace{-.3cm}\leq\frac{1}{2} \int_{\R} \int_{\R} \left|(f-g)(x-ay)\right|(f+g)(x+by) |y|^\gamma(1+x^2)\, \d x\, \d y\\
&+\frac{1}{2} \int_{\R} \int_{\R} (f+g)(x-ay)\left|(f-g)(x+by)\right| |y|^\gamma(1+x^2)\, \d x\, \d y\\
&+ \frac{1}{2} \int_{\R}\int_{\R}  \left|(f-g)(x+y)\right| (f+g)(x) |y|^\gamma (1+x^2)\, \d x\, \d y\\
&-\frac{1}{2} \int_{\R}\int_{\R} |(f-g)(x)| (f+g)(x+y) |y|^\gamma (1+x^2)\, \d x\, \d y\,.
\end{align*}
Thus, changing variables leads to 
\begin{align*}
\big[f-g,\,&\Q(f,f)-\Q(g,g)\big]_- \\
&\hspace{-.3cm}\leq \frac{1}{2} \int_{\R} \int_{\R}|(f-g)(x)| (f+g)(y) |x-y|^\gamma \left(3+(ax+by)^2 +(bx+ay)^2+y^2\right)\, \d x\, \d y\\
&-\frac{1}{2} \int_{\R}\int_{\R} |(f-g)(x)| (f+g)(y) |x-y|^\gamma (1+x^2)\, \d x\, \d y\,.
\end{align*} 
Finally, we obtain 
\begin{align*}
[f-g, \Q(f,f)-\Q(g,g)]_- &\leq\int_{\R} \int_{\R}|(f-g)(x)| (f+g)(y) |x-y|^\gamma (1+y^2)\, \d x\, \d y\\
& \leq L\|f-g\|_{L^1_2}\,,
\end{align*}
with $L=2 C_\gamma (4\|f_0\|_{L^1}+3K)$.  Next, let us look for a sub-tangent condition. Given $f\in\Omega_{K,\delta}$ and $h \geq 0$, one notices that
$$f(x)+h\Q(f,f)(x) =h\int_\R f(x-ay)f(x+by)|y|^\gamma\d y+f(x)\left(1-h \int_\R f(x+y)|y|^\gamma\d y\right)\,.$$
In particular, what prevents $f+h\Q(f,f)$ to be a.e. nonnegative is the influence of large $x$ in the last convolution integral. To overcome this difficulty, for any $R > 0$, we introduce the truncation $f_{R}(x)=f(x)\chi_{\{|x|<R\}}.$ Then, since $f \geq f_{R}$ one deduces from the above identity that
\begin{align}\label{eq:hfR}
\begin{split}
f(x)+h\Q(f_{R},f_{R})&(x)\geq h\int_\R f_{R}(x-ay)f_{R}(x+by)|y|^\gamma\d y\\
&+f_{R}(x)\left(1-h \int_\R f_{R}(x+y)|y|^\gamma\d y\right) \quad \text{ a.e. } x \in \R\,.
\end{split}
\end{align}
Now, 
$$\int_{\R} f_{R}(x+y)|y|^\gamma\d y=\int_\R f_{R}(y)|x-y|^\gamma\d y\leq \max(2^{\gamma-1},1)\int_{\R}f(y)\left(|x|^{\gamma}+|y|^{\gamma}\right)\d y$$
and using Young's inequality, one sees that there exists some positive constant $C_{0}>0$ depending only on $K,\delta,\gamma$ and $\|f_{0}\|_{L^{1}}$ but not on $R$ such that
$$\int_\R f_{R}(x+y)|y|^\gamma\d y
\leq C_{0}(1+|x|^{\gamma}) \qquad \text{ for any  } x \in \R\,,\,  R > 0 \text{ and }  f \in \Omega_{K,\delta}.$$
Therefore, recalling that $f_{R}$ is supported on $\{|x|<R\}$, one deduces from \eqref{eq:hfR} that
$$f(x)+h\Q(f_{R},f_{R})(x) \geq 0 \qquad \text{ for a.e. } x \in \R, \quad \forall 0 < h < h_{R}:=\dfrac{1}{C_{0}(1+R^{\gamma})}\,.$$
Moreover, since $\Q$ preserves the mass, 
$$\int_\R \big(f(x)+h\Q(f_{R},f_{R})(x)\big)\d x=\int_\R f(x)\d x=\int_\R f_0(x)\d x\,.$$
Finally, using \eqref{elem} it follows that, for any $R >0$
\begin{align*}
\int_\R \Q(f_{R},f_{R})&(x)|x|^{\gamma+2+\delta}\d x\\
&\leq - \frac{1}{2} \left(1-a^{\gamma+2+\delta}-b^{\gamma+2+\delta}\right) \int_\R\int_\R f_{R}(x)f_{R}(y) |x-y|^{2\gamma+2+\delta} \d x \d y \leq 0\,.
\end{align*}
Consequently, 
$$\int_\R  \big(f(x)+h\Q(f_{R},f_{R})(x)\big)|x|^{\gamma+2+\delta}\d x \leq \int_\R f(x)|x|^{\gamma+2+\delta}\d x  \leq K\, \qquad \forall R > 0\,.$$
We have thus shown that, for any $R > 0$ and any  $0 < h < h_{R}$, one has
$f+h\Q(f_{R},f_{R})\in\Omega_{K,\delta}$. In particular, for any $R >0$ and any $0 < h< h_{R}$ one has
\begin{align*}
\mathrm{dist}&(f+h\Q(f,f),\Omega_{K,\delta})\\
&\leq \|f+h\Q(f,f)-(f+h\Q(f_{R},f_{R})\|_{L^{1}}=h\,\|\Q(f,f)-\Q(f_{R},f_{R})\|_{L^{1}}\,.
\end{align*}
Now, for $f \in \Omega_{K,\delta}$, one can make $\|\Q(f,f)-\Q(f_{R},f_{R})\|_{L^{1}}$ arbitrarily small provided $R >0$ is large enough so that the sub-tangent condition
$$\liminf_{h\to 0^+}\,h^{-1} \mbox{dist}\left( f+h\Q(f,f),\Omega_{K,\delta}\right)=0$$
holds true.
We may now apply \cite[Theorem VI.4.3]{Martin} and deduce the existence and the uniqueness of a global solution $f$ to \eqref{eq:1} such that $f(t)\in\Omega_{K,\delta}$ for every $t\geq0$.  Moreover, \eqref{cons_L1} holds and, if $f_0\in\bigcap_{k\geq 0}L^1_k(\R)$, it follows from \eqref{elem} that for every   $k\geq \gmm$ and every $t\geq 0$, 
$$\int_\R f(t,x)\,|x|^k \d x\leq \int_\R f_0(x)\,|x|^k \d x,$$ 
which implies (together with the conservation of the mass) that $f(t) \in L^1_k(\R)$ for any $t\geq0$ and any $k \in\R$. Finally, it is easily checked that the family $(\mu_t)_{t\geq 0}$ defined by $\mu_t(\d x)=f(t,x)\d x$ for any $t\geq 0$ is a weak measure solution to \eqref{eq:1}. \end{proof}

\begin{proof}[Proof of Proposition \ref{prop:existence}] The proof follows the approach of  \cite[Section 4]{Lu} and we only sketch the main steps of the proof. First, since $\mu_{0} \in \P_{\gmm}^{0}(\R)$ is not the Dirac mass centered at $0$, the temperature $T_{0}:=\int_{\R}x^{2}\mu_{0}(\d x)$ is positive  and one can define a sequence $(F_{0}^{n})_{n})$ such that 
\begin{equation} \label{eq:limF}
\lim_{n \to \infty}\int_{\R} \varphi(x)F_{0}^{n}(x)\d x=\int_{\R} \varphi(x)\mu_{0}(\d x) \qquad \forall\, \varphi \in L^{\infty}_{-\gmm}(\R) \cap \C(\R)\,.
\end{equation}
with $F_{0}^{n} \in \bigcap_{s \geq 0} L^{1}_{s}(\R)$ for any $n \geq 1$ (notice that, as in \cite{Lu}, $F_{0}^{n}$ is some slight modification of the Mehler transform of $\mu_{0}$). Then, according to Theorem \ref{theo:L1}, for any $n \geq 0$, there exists a family $(F_{t}^{n})_{t \geq 0} \subset L^{1}_{\gmm}(\R)$ such that $(\mu_{t}^{n})_{t \geq 0}$ is a weak measure solution to \eqref{eq:1} associated to $\mu_{0}^{n}$, where 
$$\mu_{t}^{n}(\d x)=F_{t}^{n}(x)\d x \qquad \forall\, t \geq 0\,.$$
Then, noticing  that
$$\|\mu_{t}^{n}\|_{\gamma} \leq \|\mu_{t}^{n}\|_{\gmm}  \leq \|\mu_{0}^{n}\|_{\gmm} \qquad \forall \,n \geq 1,$$
one easily checks that
\begin{equation*} \label{eq:Qmut^{n}}
\big|\la \Q(\mu_{t}^{n},\mu_{t}^{n})\,;\,\varphi\ra\big|\leq 2\|\varphi\|_{\infty}\|\mu_{0}^{n}\|_{\gmm}^{2}\,\qquad \forall\, t \geq 0,\,n \geq 1\,.
\end{equation*}
from which one deduces as in \cite{Lu} that there exists $C=C(\mu_{0}) > 0$ (depending only on $\|\mu_{0}\|_{\gmm}$) such that, for any $t_{2} \geq t_{1} \geq 0$, 
\begin{equation}\label{eq:conPhi}\sup_{n \geq 1}\bigg|\int_{\R}\varphi(x)\mu_{t_{1}}^{n}(\d x)-\int_{\R}\varphi(x)\mu_{t_{2}}^{n}(\d x)\bigg| \leq C(\mu_{0}) \,\|\varphi\|_{\infty}\,\big|t_{2}-t_{1}\big| \qquad \forall\, \varphi \in \C_{b}(\R)\,.\end{equation}
Moreover, on the basis of the \emph{a priori} estimates \eqref{eq:decay2} (see also Remark \ref{nb:gene-rho}), 
$$M_{k}(F_{t}^{n}) \leq C_{k}(\gamma,\|\mu_{0}^{n}\|_{0}) \|\mu_{0}^{n}\|_{\gmm}\,t^{-\frac{k-2}{\gamma}} \qquad \forall\,k > 2\,.$$
Since $\lim_{n \to \infty}\|\mu_{0}^{n}\|_{0}=1$ and $\lim_{n \to \infty}\|\mu_{0}^{n}\|_{\gmm}=\|\mu_{0}\|_{\gmm}$ according to \eqref{eq:limF}, we deduce that, for any $k > 2$, there exists some positive constant $\mathbf{C}_{k}$ depending only on $k$, $\gamma$, and $M_{\gmm}(\mu_{0})$ such that
$$\sup_{n \geq 1} M_{k}(F_{t}^{n}) \leq \mathbf{C}_{k}t^{-\frac{k-2}{\gamma}} \qquad \forall\, k>2\,.$$
From this, we conclude, as in \cite{Lu}  that there exists a subsequence (still denoted by) $(\mu_{t}^{n})_{t \geq 0}$ and a family $(\mu_{t})_{t \geq 0} \subset \M_{\gmm}^{+}(\R)$ such that
\beq\label{conv_mun}
\lim_{n \to \infty}\int_{\R}\varphi(x)\mu_{t}^{n}(\d x)=\int_{\R}\varphi(x)\mu_{t}(\d x) \qquad \forall\, \varphi \in \C_{c}(\R), \qquad t \geq 0\,,
\eeq 
\begin{equation*} \label{eq:mutlim}
\|\mu_{t}\|_{\gmm}\leq \|\mu_{0}\|_{\gmm}\,,\qquad M_{k}(\mu_{t}) \leq \mathbf{C}_{k}t^{-\frac{k-2}{\gamma}} \qquad \forall\, t > 0,\,k > 2\,,
\end{equation*}
and \eqref{eq:conPhi} still holds for the limit $\mu_{t}$ (which implies that, for  any $\varphi \in \C_{b}(\R)$, the mapping 
$t \in [0,\infty) \mapsto \int_{\R}\varphi(x)\mu_{t}(\d x)$ is continuous).
To prove that $(\mu_{t})_{t\geq 0}$ is a measure weak solution to \eqref{eq:1} associated to $\mu_{0}$ in the sense of Definition \ref{defi:weak}, one argues exactly as in \cite[Section 4]{Lu}. Finally, the fact that $\mu_{0}\in \bigcap_{k\geq 0}\P^{0}_{k}(\R)$ implies that $(\mu_{t})_{t\geq 0}\subset \bigcap_{k\geq 0}\P^{0}_{k}(\R)$ is proven as in  Theorem \ref{theo:L1}.\end{proof}

\begin{nb}
Arguing as in \cite{Lu}, it is not difficult to prove that any weak measure solution to \eqref{eq:1} is in fact a strong solution in the sense of \cite{Lu}.
\end{nb}
\begin{proof}[Proof of Proposition \ref{prop:stab}] 
Let $T > 0$ be fixed. For any $\phi \in \text{Lip}_{1}(\mathbb{R})$ and $t \in [0,T]$ define
\begin{equation*}
W_{\phi}(t):=\int_{\mathbb{R}}\phi(x)\mu_{t}(\text{d} x)-\int_{\mathbb{R}}\phi(x)\nu_{t}(\text{d} x)\,.
\end{equation*}
We will also use in the proof the notation
\begin{equation*}
W(t)=d_{\text{KR}}(\mu_{t},\nu_{t})=\sup_{\phi \in \text{Lip}_{1}(\mathbb{R})}W_{\phi}(t)\,,
\end{equation*}
and recall that
\begin{equation*}
W(t)=\int_{\mathbb{R}^{2}}|x-y|\pi_{t}(\text{d} x,\text{d} y) \; \text{ for some } \pi_{t} \in \Pi(\mu_{t},\nu_{t})\,.
\end{equation*}
Let now $\phi \in \text{Lip}_{1}(\mathbb{R})$ be fixed. One has
\begin{align}
\label{e1}
\begin{split}
\dfrac{\text{d} }{\text{d} t}W_{\phi}(t)&=\frac{1}{2}\int_{\mathbb{R}^{2}}|x-y|^{\gamma}\Delta \phi(x,y)\mu_{t}(\text{d} x)\mu_{t}(\text{d} y)\\
&\hspace{2cm}-\frac{1}{2}\int_{\mathbb{R}^{2}}|v-w|^{\gamma}\Delta\phi(v,w)\nu_{t}(\text{d} v)\nu_{t}(\text{d} w)\\
&\hspace{-1cm}= \int_{\mathbb{R}^{2}}\pi_{t}(\text{d} x,\text{d} v)\int_{\mathbb{R}^{2}}\Big[|x-y|^{\gamma}{\Delta}_0\phi(x,y)-|v-w|^{\gamma}{\Delta}_0\phi(v,w)\Big]\pi_{t}(\text{d} y,\text{d} w)
\end{split}
\end{align} 
where ${\Delta}_0\phi(x,y)=\phi(\a x + \b y)-\phi(x)$ for any $(x,y) \in \R^{2}.$ Now,
\begin{align}
\label{e2}
\begin{split}
|x-y|^{\gamma}{\Delta}_0\phi(x,y) & - |v-w|^{\gamma}{\Delta}_0\phi(v,w)=\\
\Big(|x-y|^{\gamma} &- |v-w|^{\gamma}\Big){\Delta}_0\phi(x,y) + |v-w|^{\gamma}\Big({\Delta}_0\phi(x,y) - {\Delta}_0\phi(v,w)\Big)\,,
\end{split}
\end{align}
and, recalling that the  Lipschitz constant of $\phi$ is at most one, the second term readily yields
\begin{align}
\label{e3}
\begin{split}
\Big|{\Delta}_0\phi(x,y)  - {\Delta}_0\phi(v,w)\Big| & = \Big|\phi(\a x + \b y) - \phi(\a v + \b w)-\phi(x)+\phi(v)\Big|\\
&\hspace{-.5cm}\leq (1+a)|x-v| + b|y-w|\,.
\end{split}
\end{align}
For the first term in \eqref{e2},we use the identity $A=\min\{A,B\}+(A-B)_{+}$, valid for any $A,B\geq0$, to obtain the estimate
\begin{align*}
\big|A^{\gamma}-B^{\gamma}|A&=\big|A^{\gamma}-B^{\gamma}|\big(\min\{A,B\}+(A-B)_{+}\big)\\
&\leq \big|A^{\gamma}-B^{\gamma}|\,\min\{A,B\}+\big|A^{\gamma}-B^{\gamma}|\,\big|A-B|\\
&\leq (1+\gamma)\max\{A,B\}^{\gamma}\big|A-B\big|\,.
\end{align*}
The last inequality follows noticing that $\big|A^{\gamma}-B^{\gamma}|\,\min\{A,B\}\leq \gamma\max\{A,B\}^{\gamma}\big|A-B\big|$\,.  Since $\big|\Delta_{0}\phi(x,y)\big|\leq b|x-y|$ we can choose $A=|x-y|$ and $B=|v-w|$ to conclude that
\begin{align}
\label{e4}
\begin{split}
\Big(|x-y|^{\gamma} - |v-w|^{\gamma}\Big)&{\Delta}_0\phi(x,y) \leq b(1+\gamma)\max\{|x-y|,|v-w|\}^{\gamma}\big| |x-y| -  |v-w|\big|\\
&\leq b(1+\gamma)\max\{|x-y|,|v-w|\}^{\gamma}\big( |x-v| +  |y-w|\big)\,.
\end{split}
\end{align}
Gathering the estimates \eqref{e2},\eqref{e3} and \eqref{e4} in \eqref{e1} and using symmetry of the expression, it follows that $\dfrac{\text{d} }{\text{d} t}W_{\phi}(t)\leq 3(1+\gamma)H(t)$ where we introduced
\begin{align}\label{e5}
H(t):=\int_{\mathbb{R}^{2}}\pi_{t}(\text{d} x,\text{d} v)\int_{\mathbb{R}^{2}}\big(|x|^{\gamma}+|y|^{\gamma}+|v|^{\gamma}+|w|^{\gamma}\big)|x-v|\pi_{t}(\text{d} y,\text{d} w)\,.
\end{align}
Expand $H(t)=H_{1}(t) + H_{2}(t)$ where
\begin{align*}
H_{1}(t):&= \int_{\mathbb{R}^{2}}\pi_{t}(\text{d} x,\text{d} v)\int_{\mathbb{R}^{2}}\big(|y|^{\gamma}+|w|^{\gamma}\big)|x-v|\pi_{t}(\text{d} y,\text{d} w)\,,\\
H_{2}(t):&= \int_{\mathbb{R}^{2}}\big(|x|^{\gamma}+|v|^{\gamma}\big)|x-v|\pi_{t}(\text{d} x,\text{d} v)\,.
\end{align*}
Notice that
\begin{align}
\label{e6}
\begin{split}
H_{1}(t)&=\int_{\mathbb{R}^{2}}\big(|y|^{\gamma}+|w|^{\gamma}\big)\pi_{t}(\text{d} y,\text{d} w)\,\int_{\mathbb{R}^{2}}|x-v|\pi_{t}(\text{d} x,\text{d} v)\\
&=W(t)\int_{\mathbb{R}}|x|^{\gamma}(\mu_{t}+\nu_{t})(\text{d} x) \leq C\,W(t)\,.
\end{split}
\end{align}
The last inequality follows because the weak measure solutions $\mu_{t}$ and $\nu_{t}$ have the  $\gamma$-moment uniformly bounded in $t\in[0,T]$, and additionally, $\pi_{t}\in\Pi(\mu_{t},\nu_{t})$ achieves the Kantorovich-Rubinstein distance. We estimate now $H_{2}(t)$ as in \cite[Corollary 2.3]{FM}. Namely, for any $t \in [0,T]$ and any $r > 0$, one has
\begin{equation*}
\begin{split}
\int_{\R^{2}}\bigg(|x|^{\gamma} + |v|^{\gamma} \bigg) \, |x-v|\,\pi_{t}&(\d x,\d v)\leq 
2r^{\gamma}\int_{\R^{2}} |x-v|\,\pi_{t}(\d x,\d v) \\
&\phantom{+++} +\int_{\min(|x|,|v|) \geq r}\bigg(|x|^{\gamma} + |v|^{\gamma} \bigg) \, |x-v|\,\pi_{t}(\d x,\d v)\\
&=2r^{\gamma}W(t)+ \int_{\min(|x|,|v|) \geq r}\bigg(|x|^{\gamma} + |v|^{\gamma} \bigg) \, |x-v|\,\pi_{t}(\d x,\d v)\,,
\end{split}
\end{equation*}
since $\pi_{t} \in \Pi(\mu_{t},\nu_{t})$ achieves the Kantorovich-Rubinstein distance. Setting now $R_{\varepsilon}$ such that
\begin{align*}
\Big(|x|^{\gamma} + |v|^{\gamma} \Big) \, & |x-v| \left(\exp\left(\frac{\varepsilon|x|^{\gamma}}{2}\right) + \exp\left(\frac{\varepsilon |v|^{\gamma}}{2}\right)\right) \\
&\leq R_{\varepsilon}\left(\exp(\varepsilon |x|^{\gamma})+ \exp(\varepsilon |v|^{\gamma})\right) \qquad \forall\, (x,v) \in \R^{2}\,,
\end{align*}
it follows that
$$H_{2}(t) \leq 2r^{\gamma} W(t) +  R_{\varepsilon}\,C_{T}(\varepsilon) \exp\left(-\frac{\varepsilon r^{\gamma}}{2}\right)\,.$$
 Choosing 
$$r^{\gamma}=\left|2\log W(t)/\varepsilon\right|$$
we obtain
\begin{equation}\label{e7}
H_{2}(t) \leq \dfrac{4}{\varepsilon} W(t)\,|\log W(t)| + R_{\varepsilon}\,C_{T}(\varepsilon)\,W(t)\,.
\end{equation}
Estimates \eqref{e6} and \eqref{e7} imply that
\begin{equation}
\label{e8}
\dfrac{\text{d} }{\text{d} t}W_{\phi}(t)\leq K_{\varepsilon}C_{T}(\varepsilon)\,W(t)\big(1 + \left|\log W(t)\right| \big)\,,
\end{equation}
with a constant $K_{\varepsilon} > 0$ depending only on $\gamma$ and $\varepsilon>0$.  Integrating \eqref{e8} and taking the supremum over $\phi \in \mathrm{Lip}_{1}(\R)$ we get the conclusion.
\end{proof}

\end{document}